\numberwithin{equation}{section}
\theoremstyle{plain}
\newtheorem{teo}{Theorem}[section]
\newtheorem{prop}[teo]{Proposition}
\newtheorem{lem}[teo]{Lemma}
\newtheorem{cor}[teo]{Corollary}
\newtheorem*{claim}{Claim}
\newtheorem*{teo*}{Theorem}
\theoremstyle{remark}
\newtheorem{rem}[teo]{Remark}
\theoremstyle{definition}
\newtheorem{defi}[teo]{Definition}
\newtheorem*{notat}{Notation}
\DeclareMathOperator{\Set}{\mathsf{Set}}
\DeclareMathOperator{\Opmon}{\mathsf{OpMon}}
\DeclareMathOperator{\OplaxAct}{\mathsf{OplaxAct}}
\DeclareMathOperator{\Cat}{\mathsf{Cat}}
\DeclareMathOperator{\Mod}{\mathsf{Mod}}
\DeclareMathOperator{\Span}{\mathsf{Span}}
\DeclareMathOperator{\id}{\mathrm{id}}
\DeclareMathOperator{\Ob}{\mathsf{Ob}}
\DeclareMathOperator{\nLax}{\mathsf{nLax}}
\DeclareMathOperator{\tr}{\mathsf{tr}}
\DeclareMathOperator{\sk}{\mathsf{sk}}
\DeclareMathOperator{\cosk}{\mathsf{cosk}}
\newcommand{\ot}{^{\circ}\!}
\newcommand{\ob}{_{\circ}\!}
\renewcommand{\qed}{\hfill$\blacksquare$}
\newcommand{\textfrc}[1]{{\frcseries#1}}
\newcommand{\mathfrc}[1]{\textnormal{\textfrc{#1}}}
\newcommand{\cs}{\mathfrc{s}}
\mathchardef\latex@simeq\simeq
\let\simeq\relax
\DeclareRobustCommand{\simeq}{\mathrel{\mathpalette\new@simeq\relax}}
\newcommand{\new@simeq}[2]{%
  \raisebox{\simeq@raise{#1}}{$\m@th#1\latex@simeq$}%
}
\newcommand{\simeq@raise}[1]{%
  \ifx#1\displaystyle .425\fontdimen22\textfont2\fi
  \ifx#1\textstyle .425\fontdimen22\textfont2\fi
  \ifx#1\scriptstyle .425\fontdimen22\scriptfont2\fi
  \ifx#1\scriptscriptstyle .425\fontdimen22\scriptscriptfont2\fi
}
\patchcmd{\@vereq}{.5}{0}{}{}
\title{Monads of oplax actions are skew monoidales}
\author{Ramón Abud Alcalá}
\date{\today}
\thanks{The results in this paper are included in the second chapter of my PhD thesis \emph{Oplax actions and enriched icons with applications to coalgebroids and quantum categories} which was written under the supervision of Steve Lack. I want to thank Steve Lack for all the support and guidance he gave me to finalise this paper.
}
\begin{document}
\begin{abstract}
Szlachányi showed that bialgebroids can be characterised using skew monoidal categories. The characterisation reduces the amount of data, structure, and properties required to define them. Lack and Street provide a bicategorical account of that same fact; they characterise quantum categories in terms of skew monoidal structures internal to a monoidal bicategory. A quantum category is an opmonoidal monad on an enveloping monoidale $R\ot\otimes R$ in a monoidal bicategory. In a previous paper, we characterised opmonoidal arrows on enveloping monoidales as a simpler structure called oplax action.

This is the second paper based on the author's PhD thesis. Here, motivated by the fact that opmonoidal monads are monads in the bicategory of monoidales, opmonoidal arrows, and opmonoidal cells; we prove that right skew monoidales are ``monads of oplax actions''. To do so, we arrange oplax actions as the 1-simplices of a simplicial object in $\Cat$. In nice cases this simplicial object is ought to be thought as a bicategory whose arrows are oplax actions, that is to say, it is weakly equivalent to a nerve of a bicategory. We define monads of oplax actions as simplicial maps out of the Catalan simplicial set and prove that these are in bijective correspondence with right skew monoidales whose unit has a right adjoint, no assumptions required on the ambient monoidal bicategory.
\end{abstract}
\maketitle

\section{Introduction}
Oplax actions arise from the study of coalgebroids in the generalised context of a monoidal bicateogry \cite{Abud2018}. The original definition of a coalgebroid was stated by Takeuchi in \cite[Definition 3.5]{Takeuchi1987}, roughly; for $R$ and $S$ two algebras over a commutative ring $k$, an $R|S$-coalgebroid $C$ is a module in $(R\ot\otimes R)$-$\Mod$-$(S\ot\otimes S)$ together with some additional structure including a comonoid structure on $C$ when it is seen as a module in $S$-$\Mod$-$S$. In \cite{Szlachanyi2004}, Szlachányi showed that $R|S$-coalgebroids are the arrows of a bicategory whose monads are $R$-bialgebroids. He also characterized $R$-bialgebroids in \cite{Szlachanyi2003} as cocontinuous opmonoidal monads on the monoidal category $R$-$\Mod$-$R$. Lack and Street realised in \cite{Lack2012} that bialgebroids may also be viewed as opmonoidal monads on the enveloping monoidale $R\ot\otimes R$ in the monoidal bicategory $\Mod_k$ of $k$-algebras, modules, and module morphisms. In \cite{Szlachanyi2012} there is a further characterization of bialgebroids as closed skew monoidal structures on the category $\Mod$-$R$ with $R$ as the unit object. This skew monoidal categories are categories together with product and unit functors and, witnessing asssociativity and unitality, three not necessarily invertible natural transformations that satisfy five axioms similar to those for monoidal categories. Following the same idea, Lack and Street provide in \cite[Theorem 5.2]{Lack2012} an equivalence between opmonoidal monads on enveloping monoidales and skew monoidal structures whose unit has a right adjoint in the framework of a monoidal bicategory. These monoidal bicategories must satisfy some mild conditions that we gather under the name \emph{opmonadic-friendly monoidal bicategories} in Definition~\ref{def:OpmonadicFriendly}. In fact, the existence of an opmonadic adjunction $\xymatrix@1@C=5mm{i\ob\dashv i\ot:I\ar[r]&R\ot}$ is a key ingredient of Lack and Street's equivalence. This equivalence may be described in two steps. First, one sends an opmonoidal monad $\xymatrix@1@C=5mm{R\ot\otimes R\ar[r]&R\ot\otimes R}$ (and all its structure cells) to its transpose $\xymatrix@1@C=5mm{R\otimes R\ot\otimes R\ar[r]&R}$ under the biduality $R\dashv R\ot$. And second, one precomposes such an arrow (and the corresponding structure cells) with the opmonadic left adjoint $\xymatrix@1@C=5mm{1\otimes i\ob\otimes 1:R\otimes R\ar[r]&R\otimes R\ot\otimes  R}$ to obtain a right skew monoidal structure on $R$ with product $\xymatrix@1@C=5mm{R\otimes R\ar[r]&R}$ given by the composition one just obtained and with unit the transpose of $i\ot$. The inverse process requires to use first the universal property of the same opmonadic left adjoint and then the transposition along the aforementioned biduality. Now, since opmonoidal arrows between enveloping monoidales in the bicategory $\Mod_k$ are the coalgebroids of Takeuchi; when we mention coalgebroids in the generalised context of a monoidal bicategory what we really mean are opmonoidal arrows on an enveloping monoidale. In \cite[Corollary 6.11]{Abud2018}, we characterize opmonoidal arrows between enveloping monoidales using a similar process than that of Lack and Street, what we obtain are oplax actions. Oplax actions in a monoidal bicategory are a notion of action with respect to a right skew monoidale where the associative and unit laws are witnessed by cells that are not necessarily invertible, and satisfy further coherence conditions.

This paper builds up on \cite{Abud2018} and it is based on the second chapter of the author's PhD thesis \cite{Abud2017}. Here, we analyse how oplax actions in the equivalence of \cite[Corollary 6.11]{Abud2018} take the place of right skew monoidales in the equivalence of \cite[Theorem 5.2]{Lack2012}. Now, since opmonoidal monads in a monoidal bicategory $\mathcal{M}$ are monads in the bicategory $\Opmon(\mathcal{M})$ of monoidales, opmonoidal arrows, and opmonoidal cells; we expect that ``right skew monoidales are monads of oplax actions whose unit has a right adjoint''. As the title suggests, this motto motivates the whole paper. Our objective is to make the motto into a precise mathematical statement and show that it is true in any monoidal bicateogry. In Section~\ref{sec:Preliminaries} we recall some definitions from formal monoidal category theory. We also show that if the ambient monoidal bicategory $\mathcal{M}$ is opmonadic-friendly, we can form a bicategory of oplax actions by simply copying the bicategory structure from $\Opmon(\mathcal{M})$. In this case, the motto becomes true by construction but we do not have an explicit description of a monad of oplax actions since we are relying on the universal property of opmonadicity. Furthermore, there is no easy way to define a bicategory whose arrows $\xymatrix@1@C=5mm{R\ar[r]|-@{|}&S}$ are oplax actions $\xymatrix@1@C=5mm{SR\ar[r]&S}$ for skew monoidales $R$ and $S$ when $\mathcal{M}$ is not opmonadic-friendly. We solve this problem in Section~\ref{sec:OplaxActions} by defining a simplicial object $\OplaxAct(\mathcal{M})$ in $\Cat$ whose 1-simplices are oplax actions in $\mathcal{M}$. The 2-simplices may then be thought of as encoding generalised horizontal composites of oplax actions. The fact that this simplicial object is 2-coskeletal means that simplicial maps $\xymatrix@1@C=5mm{\mathbb{C}\ar[r]&\OplaxAct(\mathcal{M})}$ out of the Catalan simplicial set may be interpreted as monads in $\OplaxAct(\mathcal{M})$. That is because monads in a bicategory $\mathcal{B}$ are in bijection with simplicial maps $\xymatrix@1@C=5mm{\mathbb{C}\ar[r]&N\mathcal{B}}$ into the 1-nerve of the bicategory $\mathcal{B}$ \cite{Buckley2014}. In Section~\ref{sec:MonadsOfOplaxActions} we prove that these simplicial-style monads of oplax actions are in bijection with right skew monoidales in $\mathcal{M}$ whose unit has a right adjoint, no additional assumptions required on the monoidal bicategory $\mathcal{M}$. So far, we have only used the underlying simplicial set of the simplicial object in $\Cat$ of oplax actions. The 2-dimensional part of the structure is used in the last section as we shall explain. Now, simplicial objects in $\Cat$ are organised in a 2-category $[\Delta^{\mathrm{op}},\Cat]$, which apart from the usual notions of equality, isomorphism, and equivalence that exist in any 2-category, there is a notion of \emph{weak equivalence}. 
In the same way that there is a nerve construction which assigns to each category a simplicial set, there are many different nerve-like constructions that assign to each bicategory a simplicial object in $\Cat$ or in $\Set$ \cite{Carrasco2010}. We are interested in what we call the \emph{lax-2-nerve}. We conclude this paper with Section~\ref{sec:OplaxActionsOpmonoidalArrows}, where we prove that if $\mathcal{M}$ is an opmonadic-friendly monoidal bicategory, our simplicial object in $\Cat$ of oplax actions is weakly equivalent to the lax-2-nerve of the full subbicategory of $\Opmon(\mathcal{M})$ on the enveloping monoidales in $\mathcal{M}$.
\section{Preliminaries}\label{sec:Preliminaries}
We follow the same notation and conventions as in \cite{Abud2018}. Our universe of discourse is a monoidal bicategory $\mathcal{M}$, we denote its tensor product by juxtaposition and its unit object by $I$. Opmonadic adjunctions in $\mathcal{M}$ play an important role, especially when these behave well with respect to the monoidal structure of $\mathcal{M}$.
\begin{defi}
An adjunction $\xymatrix@1@C=5mm{f\dashv g:S\ar[r]&R}$ (or a left adjoint) is called \emph{opmonadic} (or of \emph{Kleisli type}), if  $R$ is a Kleisli object for the monad $t$ associated to the adjunction $f\dashv g$. That is to say, if for every object $X$ the adjunction obtained by applying the representable functor $\mathcal{M}(\_,X)$ is monadic in $\Cat$ in the up to equivalence sense.
\[
\vcenter{\hbox{\xymatrix{
R\dtwocell_{f}^{g}{'\dashv}\\
S\ar@(ru,rd)^-{t}
}}}
\qquad\qquad\qquad
\vcenter{\hbox{\xymatrix{ \mathcal{M}(R,X)\dtwocell_{\mathcal{M}(f,X)\hspace{9mm}}^{\hspace{9mm}\mathcal{M}(g,X)}{'\dashv}\\
\mathcal{M}(S,X)\ar@(ru,rd)[]!<6mm,0mm>;[]!<6mm,0mm>^-{\mathcal{M}(t,X)}
}}}
\]
\end{defi}
\begin{defi}\label{def:OpmonadicFriendly}
An \emph{opmonadic-friendly monoidal bicategory} is a monoidal bicategory such that
\begin{itemize}
\item Tensoring with objects on either side preserves opmonadicity.
\item Composing with arrows on either side preserves any existing reflexive coequaliser in the hom categories.
\end{itemize}
\end{defi}

An object $R$ has a \emph{right bidual} $R\ot$ if there are unit $\xymatrix@1@C=5mm{n:I\ar[r]&R\ot R}$ and counit $\xymatrix@1@C=5mm{e:RR\ot\ar[r]&I}$ arrows in $\mathcal{M}$ satisfying the triangle equations up to coherent isomorphism; this situation is called \emph{biaduality} and it is denoted by $R\dashv R\ot$.

\begin{defi}\label{def:SkewMonoidale}
A \emph{right skew monoidale} in $\mathcal{M}$ consists of an object $M$, a product arrow $\xymatrix@1@C=5mm{m:MM\ar[r]&M}$, a unit arrow $\xymatrix@1@C=5mm{u:I\ar[r]&M}$, an associator cell $\alpha$, a left unitor cell $\lambda$, and a right unitor cell $\rho$ (not necessarily invertible),
\[
\vcenter{\hbox{\xymatrix@!0@=15mm{
MMM\ar[r]^-{m1}\ar[d]_-{1m}\xtwocell[rd]{}<>{^\alpha}&MM\ar[d]^-{m}\\
MM\ar[r]_-{m}&M
}}}
\qquad
\vcenter{\hbox{\xymatrix@!0@=15mm{
M\ar[r]^-{u1}\ar[rd]_-{1}\xtwocell[rd]{}<>{^<-2>\lambda}&MM\ar[d]|-{m}&M\ar[l]_-{1u}\ar[ld]^-{1}\xtwocell[ld]{}<>{^<2>\rho}\\
&M&
}}}
\]
satisfying five axioms: in the same order as \cite[Section 4]{Lack2012} we will refer to them as, the pentagon \eqref{ax:SKM1}, the triangle \eqref{ax:SKM2}, \eqref{ax:SKM3}, \eqref{ax:SKM4}, and \eqref{ax:SKM5}.
\end{defi}

If $\alpha$, $\lambda$, and $\rho$ are isomorphisms we speak of a \emph{monoidale} and using the same argument as in \cite{Kelly1964} the pentagon \eqref{ax:SKM1} and the triangle \eqref{ax:SKM2} axioms imply the other three axioms. A biduality $R\dashv R$ induces a monoidal structure on the object $R\ot R$ with product $\xymatrix@1@C=5mm{1e1:R\ot RR\ot R\ar[r]&R\ot R}$, unit $n$, and whose associative and unit laws are satisfied up to coherent isomorphism. Monoidales induced by bidualities are referred to as \emph{enveloping monoidales}. We recall \cite[Lemma 4.12]{Abud2018}, this is an example of a skew monoidale which in general is not a monoidale.

\begin{lem}[Abud Alcalá]\label{lem:OneRightSkewMonoidale}
For every adjunction $\xymatrix@1@C=5mm{i\dashv i^*:I\ar[r]&R}$ there is a right skew monoidal structure on $R$ given as follows:
\begin{center}
\begin{tabular}{rlrl}
\centering
Product&
$
\xymatrix{
RR\ar[r]^-{i^*1}&R
}
$&
Unit&
$
\xymatrix{
I\ar[r]^-{i}&R
}$
\\
Associator&
$
\vcenter{\hbox{\xymatrix@!0@=15mm{
RRR\ar[r]^-{i^*11}\ar[d]_-{1i^*1}&RR\ar[d]^-{i^*1}\\
RR\ar[r]_-{i^*1}\ar@{}[ru]|-*[@ru]{\cong}&R
}}}
$&
Unitors&
$
\vcenter{\hbox{\xymatrix@!0{
R\ar[rr]^-{i1}\ar[rrdd]_-{1}\xtwocell[rrdd]{}<>{^<-2>\eta1}&&RR\ar[dd]|-{i^*1}\ar@{}[rd]|-<<<<*[@rd]{\cong}&&R\ar[ll]_-{1i}\ar[ld]^-{i^*}\ar@/^1cm/[lldd]^-{1}\xtwocell[lldd]{}<>{^<-3>\varepsilon}\\
&&&I\ar[ld]^-{i}&\\
&&R&&
}}}
$
\end{tabular}
\end{center}
\end{lem}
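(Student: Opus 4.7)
The plan is to parse the proposed structure carefully and then verify each of the five axioms of a right skew monoidale by reducing them to the triangle identities of the adjunction $i\dashv i^*$. The associator $\alpha$ is nothing other than the pseudofunctoriality coherence for $\otimes$ applied to the interchange square between $(i^*1)(i^*11)$ and $(i^*1)(1i^*1)$, and is therefore automatically invertible. The left unitor $\lambda$ is obtained by whiskering the unit $\eta\colon 1_I\Rightarrow i^*i$ with $1_R$, after using the pseudonaturality of $\otimes$ to identify $(i^*i)1\simeq(i^*1)(i1)$. The right unitor $\rho$ is the counit $\varepsilon\colon i i^*\Rightarrow 1_R$ preceded by the interchange isomorphism identifying $(i^*1)(1i)$ with $i\cdot i^*$ modulo the canonical unit-constraint isomorphisms $R\simeq RI$ and $R\simeq IR$; this is exactly what the right-hand triangle in the lemma records, with the marked $\cong$ filling one half of the region and $\varepsilon$ filling the other.

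Next I would verify the axioms one by one. The pentagon \eqref{ax:SKM1} is pure coherence: because $\alpha$ is itself an interchange isomorphism arising from the pseudofunctoriality of $\otimes$, its pentagon for four copies of $R$ reduces to a pentagon among the coherence data of $\mathcal{M}$ and holds by the coherence theorem for monoidal bicategories. Each of the remaining axioms \eqref{ax:SKM2}--\eqref{ax:SKM5} mixes one $\alpha$ with one or two of $\lambda,\rho$; since $\alpha$ is pure coherence, these collapse, after cancelling coherence 2-cells, to statements about $\eta$ and $\varepsilon$ whiskered with $1_R$. Two of them reduce to the triangle identity $(\varepsilon\cdot i)(i\cdot\eta)=1_i$, one to $(i^*\cdot\varepsilon)(\eta\cdot i^*)=1_{i^*}$, and the last to coherence alone.

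The main obstacle will be the book-keeping of the monoidal bicategory's coherence 2-cells — associators, unit constraints, and pseudofunctoriality and pseudonaturality cells for $\otimes$ — in each axiom, where a direction error can easily disguise a triangle identity as something unrecognisable. A pragmatic shortcut is to invoke strictification for monoidal bicategories and carry out the verifications inside a Gray-monoid, in which interchange and the associator of $\otimes$ are strict; every axiom then collapses to a direct application of one of the two triangle identities for $i\dashv i^*$. I expect the longest routine step to be the explicit identification of the right unitor with $\varepsilon$, since this is where the unit constraints $R\simeq RI\simeq IR$ are used most substantially.
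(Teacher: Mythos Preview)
The paper does not actually prove this lemma: it is merely recalled from \cite[Lemma~4.12]{Abud2018} as background, with no proof given here. So there is no ``paper's own proof'' to compare against.

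That said, your approach is the natural and correct one, and is essentially what any proof of this statement must do. Your identification of the structure cells is right: $\alpha$ is a pure interchange/coherence isomorphism, $\lambda$ is $\eta 1$ up to coherence, and $\rho$ is $\varepsilon$ precomposed with the interchange isomorphism $(i^*1)(1i)\cong i\,i^*$. Your strategy of passing to a Gray monoid to suppress the coherence book-keeping is exactly the pragmatic move one wants, and once there the five axioms do collapse as you describe: \eqref{ax:SKM1} is pure coherence, and each of \eqref{ax:SKM2}--\eqref{ax:SKM5} reduces to one of the two triangle identities for $i\dashv i^*$ (or to coherence alone). One small correction to your bookkeeping: you say ``two of them reduce to one triangle identity, one to the other, and the last to coherence alone,'' but you should check this count carefully---in particular \eqref{ax:SKM3} (the $\alpha$--$\lambda$ compatibility) and \eqref{ax:SKM4} (the $\alpha$--$\rho$ compatibility) each use a triangle identity, while \eqref{ax:SKM5} uses the compatibility of $\eta$ and $\varepsilon$ directly. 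None of this affects the validity of your outline.
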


\begin{defi}
An \emph{opmonoidal arrow} $\xymatrix@1@C=5mm{C:M\ar[r]&N}$ between right skew monoidales $M$ and $N$ in $\mathcal{M}$ consists of an arrow $\xymatrix@1@C=5mm{C:M\ar[r]&N}$ in $\mathcal{M}$ equipped with an opmonoidal composition constraint cell $C^2$ and an opmonoidal unit constraint cell $C^0$ as shown below,
\[
\vcenter{\hbox{\xymatrix@!0@=15mm{
MM\ar[r]^-{CC}\ar[d]_-{m}\xtwocell[rd]{}<>{^C^2}&NN\ar[d]^-{m}\\
M\ar[r]_-{C}&N
}}}
\qquad\qquad
\vcenter{\hbox{\xymatrix@!0@R=15mm@C=10mm{
&I\ar[ld]_-{u}\ar[rd]^-{u}\xtwocell[rd]{}<>{^<3>C^0}&\\
M\ar[rr]_-{C}&&N
}}}
\]
satisfying three axioms, in the same order as \cite[Definition 4.8]{Abud2018}, they are referred to as \eqref{ax:OM1}, \eqref{ax:OM2}, and \eqref{ax:OM3}.
\end{defi}
\begin{defi}
An \emph{opmonoidal cell} between a parallel pair of opmonoidal arrows $C$ and $\xymatrix@1@C=5mm{C':M\ar[r]&N}$ in $\mathcal{M}$ consists of a cell $\xi$ as shown,
\[
\vcenter{\hbox{\xymatrix{
M\ar@/_3mm/[r]_-{C}\ar@/^3mm/[r]^-{C'}\xtwocell[r]{}<>{^\xi}&N
}}}
\]
satisfying two axioms, in the same order as \cite[Definition 4.10]{Abud2018}, they are referred to as \eqref{ax:OM4} and \eqref{ax:OM5}.
\end{defi}
There is a bicategory $\Opmon(\mathcal{M})$ whose objects are monoidales in $\mathcal{M}$, arrows are opmonoidal arrows between them, and cells are opmonoidal cells between them. When $\mathcal{M}$ is right autonomous, i.e. right biduals exist for every object of $\mathcal{M}$, we may consider the full subbicategory $\Opmon^{\mathrm{e}}(\mathcal{M})$ of $\Opmon(\mathcal{M})$ consisting of the enveloping monoidales $R\ot R$ induced by objects $R$ and a chosen right bidual $R\ot$.

\begin{defi}
\emph{An oplax $M$-action on $A$} in a monoidal bicategory $\mathcal{M}$ consists of an arrow $\xymatrix@1@C=5mm{a:AM\ar[r]&A}$, an associator cell $a^2$, and a right unitor cell $a^0$,
\[
\vcenter{\hbox{\xymatrix@!0@=15mm{
AMM\ar[r]^-{a1}\ar[d]_-{1m}\xtwocell[rd]{}<>{^a^2}&AM\ar[d]^-{a}\\
AM\ar[r]_-{a}&A
}
\qquad
\xymatrix@!0@=15mm{
AM\ar[d]_-{a}&A\ar[l]_-{1u}\ar[dl]^-{1}\xtwocell[ld]{}<>{^<2>\ \ a^0}\\
A
}}}
\]
satisfying three axioms, in the same order as \cite[Definition 6.1]{Abud2018}, they are referred to as \eqref{ax:OLA1}, \eqref{ax:OLA2}, and \eqref{ax:OLA3}.
\end{defi}
\begin{defi} Let $a$ and $a'$ be oplax right $M$-actions on $A$. A \emph{cell of oplax right $M$-actions on $A$} from $a$ to $a'$ consists of a cell $\varphi$ in $\mathcal{M}$
\[
\vcenter{\hbox{\xymatrix{
**[l]AM\ar@/^3mm/[r]^-{a'}\ar@/_3mm/[r]_-{a}\xtwocell[r]{}<>{^\varphi}&A
}}}
\]
satisfying two axioms, in the same order as \cite[Definition 6.5]{Abud2018}, they are referred to as \eqref{ax:OLA4} and \eqref{ax:OLA5}.
\end{defi}
Oplax $M$-actions on an object $A$ form a category $\OplaxAct(M;A)$ and there is a forgetful functor $\xymatrix@1@C=5mm{\OplaxAct(M;A)\ar[r]&\mathcal{M}(AM,A)}$ which takes an oplax action to its underlying arrow.

In the introduction we recalled a sketch of the proof of \cite[Corollary 6.11]{Abud2018} (see Theorem~\ref{teo:Opmon_is_OplaxAct_Local} below). This theorem establishes an equivalence of categories between certain categories of oplax actions and hom categories of opmonoidal arrows between enveloping monoidales; it holds true provided that $\mathcal{M}$ is an opmonadic-friendly monoidal bicategory in the sense of Definition~\ref{def:OpmonadicFriendly}. The oplax actions involved in this equivalence are on an object $S$ that has a right bidual $S\ot$ and with respect to a right skew monoidale $R$ whose unit $\xymatrix@1@C=5mm{i:I\ar[r]&R}$ has a right adjoint $i^*$. In fact, the right skew monoidal structure on $R$ has product $\xymatrix@1@C=5mm{i^*1:RR\ar[r]&R}$ and unit $i$, this structure is precisely the one induced by the adjunction $i\dashv i^*$ as in Lemma~\ref{lem:OneRightSkewMonoidale}. Furthermore, $R$ must have a right bidual $R\ot$ as an object of $\mathcal{M}$, and the opposite adjunction $i\ob\dashv i\ot$ of the adjunction $i\dashv i^*$ must be opmonadic.  

\begin{teo}[Abud Alcalá]\label{teo:Opmon_is_OplaxAct_Local}
Let $\mathcal{M}$ be an opmonadic-friendly autonomous monoidal bicategory and let $i\ob\dashv i\ot$ be an opmonadic adjunction as shown,
\[
\vcenter{\hbox{\xymatrix{
R\ot\xtwocell[d]{}_{i\ob}^{i\ot}{'\dashv}\\
I
}}}
\]
the following are equivalent:
\begin{enumerate}
\item An opmonoidal arrow $\xymatrix@1@C=5mm{R\ot R\ar[r]&S\ot S}$ between enveloping monoidales.
\item An oplax right action $\xymatrix@1@C=5mm{SR\ar[r]&S}$, with respect to the skew monoidal structure on $R$ induced as in Lemma~\ref{lem:OneRightSkewMonoidale} by the adjunction $i\dashv i^*$ opposite to $i\ob\dashv i\ot$.
\end{enumerate}
In other words, for every two bidualities $R\dashv R\ot$ and $S\dashv S\ot$ there is an equivalence of categories
\[
\Opmon(R\ot R,S\ot S)\simeq\OplaxAct(R;S)
\]
obtained by taking the transpose along the biduality $S\dashv S\ot$ and then precomposing with the opmonadic left adjoint $\xymatrix@1@C=5mm{1i\ob 1:SR\ar[r]&SR\ot R}$.
\end{teo}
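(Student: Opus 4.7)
The strategy is to build the equivalence as a composite of two separate equivalences, and then to reconcile the opmonoidal data on one side with the oplax action data on the other. The first equivalence is purely formal: since $S\dashv S\ot$, transposition along this biduality gives an isomorphism of hom categories
\[
\mathcal{M}(R\ot R,S\ot S)\;\cong\;\mathcal{M}(SR\ot R,S),
\]
natural in both variables. The second equivalence uses that $i\ob\dashv i\ot$ is opmonadic: by the opmonadic-friendliness hypothesis, tensoring on either side preserves opmonadicity, so $\xymatrix@1@C=5mm{1 i\ob 1:SR\ar[r]&SR\ot R}$ is again opmonadic. Consequently, precomposition with $1 i\ob 1$ gives a fully faithful functor $\mathcal{M}(SR\ot R,S)\rightarrow\mathcal{M}(SR,S)$, whose essential image consists of arrows whose domain carries a suitable $(1i\ot 1)(1i\ob 1)$-algebra coherence; the reflexive coequaliser clause in Definition~\ref{def:OpmonadicFriendly} ensures that this property is preserved by the relevant composites. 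Composing the two equivalences yields a functor $\Opmon(R\ot R,S\ot S)\rightarrow\OplaxAct(R;S)$ at the level of underlying arrows; the task is to promote it to an equivalence of categories of structured objects.

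The next step is to translate the opmonoidal cells $C^2$ and $C^0$ through both equivalences. Under transposition, $C^2$ becomes a cell between two arrows $SR\ot RR\ot R\rightarrow S$ whose codomain factors through the product $1e1$ of the enveloping monoidale $R\ot R$, and precomposition with $1i\ob 11 i\ob 1$ turns this cell into one between composites involving the induced product $i^*1:RR\to R$ from Lemma~\ref{lem:OneRightSkewMonoidale}; this is exactly the datum of the associator $a^2$ of an oplax action. Similarly, $C^0$ becomes the right unitor $a^0$ once one uses that the unit $n$ of $R\ot R$ transposes to $i\ob$ and that the unit of the induced skew monoidal structure on $R$ is $i$, the mate of $i\ob$ under the biduality. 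The inverse direction applies the universal property of the opmonadic left adjoint $1i\ob 1$ (and its tensor powers) to extend the oplax action cells uniquely to opmonoidal cells on $SR\ot R$-valued arrows, then transposes back along $S\dashv S\ot$.

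The main obstacle is checking that the three opmonoidal axioms \eqref{ax:OM1}, \eqref{ax:OM2}, \eqref{ax:OM3} correspond term by term to the three oplax action axioms \eqref{ax:OLA1}, \eqref{ax:OLA2}, \eqref{ax:OLA3}, and likewise for \eqref{ax:OM4}, \eqref{ax:OM5} versus \eqref{ax:OLA4}, \eqref{ax:OLA5} on cells. This is a pasting-diagram calculation: each side of an axiom on the opmonoidal side is a pasting in $\mathcal{M}$ involving the enveloping monoidale structure ($1e1$, $n$, and the coherent isomorphisms given by the triangle equations up to iso), and one must rewrite it after transposing and precomposing with $1i\ob 1$ to obtain literally the corresponding pasting for the oplax action. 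The opmonadic-friendly hypothesis is needed precisely here twice over: once so that $1i\ob 1$ (and its iterates $1i\ob 11 i\ob 1$, etc.\ appearing in the associativity axiom) are opmonadic, allowing one to recognise an opmonoidal cell from its restriction, and once so that composing with arbitrary 1-cells on either side preserves the reflexive coequalisers exhibiting these opmonadicities, so that the comparison cells for iterated tensor powers are themselves reflexive coequalisers and the induced oplax axioms pull back uniquely.

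Finally, once the functor is shown to be essentially surjective (reconstructing an opmonoidal arrow from any oplax action by the inverse construction) and fully faithful (since both equivalences above are), the equivalence of categories asserted in the theorem follows. The argument is constructive and the explicit recipe is exactly the one stated: transpose along $S\dashv S\ot$ and precompose with $1i\ob 1$.
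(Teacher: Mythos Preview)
Your proposal is correct and follows exactly the two-step approach the paper describes: transpose along $S\dashv S\ot$ and then precompose with the opmonadic left adjoint $1i\ob 1$. Note that the paper does not actually prove this theorem in situ---it is recalled from \cite[Corollary~6.11]{Abud2018} and marked with a bare \qed---but your outline matches both the statement's own description of the construction and the decomposition the paper later invokes explicitly (the transposition step is \cite[Theorem~6.7]{Abud2018} and the opmonadicity step is \cite[Theorem~6.10]{Abud2018}, as used in the proofs of Theorems~\ref{teo:Opmon2simplexTransposition} and~\ref{teo:2SimplexOpmonadicOplaxAct}).
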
\qed

This equivalence exposes an ``arrow-like'' essence for oplax actions, which justifies the notation for the categories $\OplaxAct(M;A)$. Following this idea, we depict oplax actions and cells of oplax actions with arrows and double arrows with a dash in the middle. This also distinguishes them from those of $\mathcal{M}$.
\[
\vcenter{\hbox{\xymatrix@!0@=8mm{
M\ar@/^4mm/[rr]|@{|}\ar@/_4mm/[rr]|@{|}\xtwocell[rr]{}<>{^}\ar@{}@<-.5mm>[rr]|{-}&&A
}}}
\]

Hence, it is also worthwhile to figure out when the categories of oplax actions are the hom categories of a bicategory. A simple answer is to globally require the hypothesis of Theorem~\ref{teo:Opmon_is_OplaxAct_Local}. In which case we call the resulting bicategory $\underline{\OplaxAct}(\mathcal{M})$; the underline is used to distinguish between the bicategory of oplax actions and the simplicial object in $\Cat$ of oplax actions that we shall define in the next section.

\begin{teo}\label{teo:Opmon_is_OplaxAct_Global}
Let $\mathcal{M}$ be a right autonomous opmonadic-friendly monoidal bicategory such that every object $R$ has a chosen adjunction $i\dashv i^*$ whose opposite adjunction $i\ob\dashv i\ot$ is opmonadic.
\[
\vcenter{\hbox{\xymatrix{
R\ot\dtwocell_{i\ob}^{i\ot}{'\dashv}\\
I
}}}
\]
There exists a \emph{bicategory of oplax actions} $\underline{\OplaxAct}(\mathcal{M})$ whose objects are those of $\mathcal{M}$ equipped with the skew monoidal structure induced by their chosen adjunction, and with $\OplaxAct(R;S)$ as its hom categories. Moreover, there is a biequivalence of bicategories
\[
\Opmon^{\mathrm{e}}(\mathcal{M})\simeq\underline{\OplaxAct}(\mathcal{M})
\]
\end{teo}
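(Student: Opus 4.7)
The plan is to construct $\underline{\OplaxAct}(\mathcal{M})$ by transport of structure along the local equivalences provided by Theorem~\ref{teo:Opmon_is_OplaxAct_Local}. Concretely, each object $R$ of $\mathcal{M}$ comes with a chosen opmonadic adjunction $i\ob\dashv i\ot$ (together with its opposite $i\dashv i^*$ and the induced right skew monoidal structure of Lemma~\ref{lem:OneRightSkewMonoidale}), and hence an associated enveloping monoidale $R\ot R$ in $\Opmon^{\mathrm{e}}(\mathcal{M})$. Fixing these choices gives a function on objects, and for each pair $R,S$ Theorem~\ref{teo:Opmon_is_OplaxAct_Local} yields an equivalence of hom categories
\[
\Phi_{R,S}\colon\Opmon(R\ot R,S\ot S)\simeq\OplaxAct(R;S),
\]
obtained by transposing along the biduality $S\dashv S\ot$ and then precomposing with $1\,i\ob\,1$. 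I would declare $\OplaxAct(R;S)$ to be the hom category of $\underline{\OplaxAct}(\mathcal{M})$ at $(R,S)$ (this is already given), and then use $\Phi_{R,S}$ with a chosen pseudoinverse $\Psi_{R,S}$ to carry the composition, identities, associator, and unitors of $\Opmon^{\mathrm{e}}(\mathcal{M})$ across to $\underline{\OplaxAct}(\mathcal{M})$.

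Explicitly, horizontal composition is defined by
\[
b\circ a\;:=\;\Phi_{R,T}\bigl(\Psi_{S,T}(b)\circ\Psi_{R,S}(a)\bigr),
\]
the identity at $R$ is $\Phi_{R,R}$ of the identity opmonoidal arrow on $R\ot R$, and the associator and unitors are obtained by conjugating those of $\Opmon^{\mathrm{e}}(\mathcal{M})$ with the coherence isomorphisms of the equivalences $\Phi,\Psi$. The bicategory axioms (pentagon and triangle) follow automatically because a pseudofunctor transports them across an equivalence of hom categories; said differently, the tuple $(\Phi,\Psi,\eta,\varepsilon)$ assembles into the data of a biequivalence by construction, so $\underline{\OplaxAct}(\mathcal{M})\simeq\Opmon^{\mathrm{e}}(\mathcal{M})$ holds for free once the transport is declared to be the bicategory structure.

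The only real content of the proof is therefore verifying that this recipe actually produces a bicategory rather than merely a bicategory-shaped assembly of data, and this reduces to checking that the local equivalences $\Phi_{R,S}$ are genuine equivalences in $\Cat$ (which Theorem~\ref{teo:Opmon_is_OplaxAct_Local} gives) and that choices of pseudoinverses and unit/counit isomorphisms may be fixed coherently. The standard transport-of-structure principle for bicategories, applied pointwise to the hom categories, then delivers both the bicategory $\underline{\OplaxAct}(\mathcal{M})$ and the biequivalence simultaneously.

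The main obstacle I anticipate is bookkeeping rather than conceptual: one must verify that the transported composition, whose definition involves the opmonadic left adjoints $1\,i\ob\,1$ and the bidualities at three objects $R,S,T$, genuinely agrees up to coherent isomorphism with what one gets from composing opmonoidal arrows between enveloping monoidales. In other words, the description of $b\circ a$ must be shown to be well-defined up to canonical isomorphism independently of intermediate choices, and this amounts to chasing the universal property of opmonadicity and the triangle equations for the bidualities. Once this coherence is in place, both assertions of the theorem reduce to instances of transport of bicategory structure along a family of equivalences of hom categories.
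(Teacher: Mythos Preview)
Your proposal is correct and matches the paper's own argument essentially verbatim: the paper defines composition and identities in $\underline{\OplaxAct}(\mathcal{M})$ precisely by ``going back and forth along instances of the equivalence of Theorem~\ref{teo:Opmon_is_OplaxAct_Local}'', exactly the transport-of-structure you describe. The only addition the paper makes is to observe that the transported identity at $R$ is concretely the regular oplax action $i^*1$ of Lemma~\ref{lem:OneRightSkewMonoidale}, and to remark that the composition has no explicit description because it relies on the existential part of opmonadicity---which is exactly the ``bookkeeping obstacle'' you anticipated.
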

\begin{proof}

Composition and identities are calculated by going back and forth along instances of the equivalence of Theorem~\ref{teo:Opmon_is_OplaxAct_Local}. Thus, for an object $R$ in $\mathcal{M}$ the identity functor on $R$ in $\underline{\OplaxAct}(\mathcal{M})$ is given as follows,
\[
\vcenter{\hbox{\xymatrix{
\mathbbm{1}\ar[r]^-{\id}&\Opmon(R\ot R,R\ot R)\simeq\OplaxAct(R;R),
}}}
\]
its image is the regular oplax action $\xymatrix@1@C=5mm{i^*1:RR\ar[r]&R}$ of the right skew monoidale structure on $R$ induced by the chosen adjunction $i\dashv i^*$, see Lemma~\ref{lem:OneRightSkewMonoidale}. For three objects $R$, $S$, and $T$ of $\mathcal{M}$, the composition functor is given as follows.
\begin{multline*}
\OplaxAct(S;T)\times\OplaxAct(R;S)\simeq\Opmon(S\ot S,T\ot T)\times\Opmon(R\ot R,S\ot S)\\
\vcenter{\hbox{\xymatrix{
\ar[r]&\Opmon(R\ot R,T\ot T)\simeq\OplaxAct(R;T)
}}}
\end{multline*}
Unfortunately, there is no explicit description of the composition functor since the first equivalence relies on the existential part of the universal property of opmonadic adjunctions.
\end{proof}

At this point, we want to draw our attention to some facts regarding monads in $\Opmon^{\mathrm{e}}(\mathcal{M})$ and, consequently, in $\underline{\OplaxAct}(\mathcal{M})$. First, we know that monads in $\Opmon(\mathcal{M})$ are by definition opmonoidal monads in $\mathcal{M}$. Second, opmonoidal monads on enveloping monoidales are precisely the quantum categories of Day and Street in the case $\mathcal{M}=\Mod(\mathcal{V})$, see \cite[Proposition 3.3 and Section 12]{Day2003a}. And third, in Theorem~\ref{teo:Opmon_is_OplaxAct_Global} we used the same technique that Lack and Street used to characterise opmonoidal monads on enveloping monoidales in \cite[Theorem~5.2]{Lack2012}, see below.

\begin{teo}[Lack-Street]\label{teo:OpmonMnd_is_SkewMon}
Let $\mathcal{M}$ be an opmonadic-friendly monoidal bicategory. For every biduality $R\dashv R\ot$ and every opmonadic adjunction
\[
\vcenter{\hbox{\xymatrix{
R\ot\dtwocell_{i\ob}^{i\ot}{'\dashv}\\
I
}}}
\]
the following are equivalent:
\begin{itemize}
\item An opmonoidal monad on an enveloping monoidale $\xymatrix@1@C=5mm{R\ot R\ar[r]&R\ot R}$.
\item A right skew monoidal structure on $R$ with skew unit $\xymatrix@1@C=5mm{i:I\ar[r]&R}$ the opposite of $i\ot$.
\end{itemize}
\end{teo}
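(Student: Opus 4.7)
The plan is to derive this theorem from Theorem~\ref{teo:Opmon_is_OplaxAct_Local} by specialising the latter equivalence to $S=R$ and then transporting monoid (i.e.\ monad) structures across it. Concretely, setting $S=R$ yields
\[
\Opmon(R\ot R,R\ot R)\simeq\OplaxAct(R;R),
\]
where the right-hand category is taken with respect to the canonical right skew monoidale on $R$ with unit $i$ from Lemma~\ref{lem:OneRightSkewMonoidale}. Since an opmonoidal monad on $R\ot R$ is, by definition, a monoid in the strict monoidal category $(\Opmon(R\ot R,R\ot R),\circ,1_{R\ot R})$, it will suffice to show that such monoids correspond, under the equivalence, to right skew monoidales on $R$ with unit $i$.

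To make this translation explicit I would first compute two specific images under the equivalence. Unwinding its recipe (transpose along the biduality $R\dashv R\ot$, then precompose with the opmonadic left adjoint $1i\ob 1$), the identity opmonoidal arrow $1_{R\ot R}$ is sent to the regular oplax action $\xymatrix@1@C=5mm{i^*1:RR\ar[r]&R}$, that is, the product $m$ of the canonical skew monoidale on $R$ induced by $i\dashv i^*$. A parallel pasting calculation shows that the horizontal composite $CC$ corresponds to an oplax action whose underlying arrow is $a\circ(a1):RRR\to R$, where $a$ is the oplax action corresponding to $C$.

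With these identifications in place the remaining work is a term-by-term translation. The opmonoidal unit $\eta:1_{R\ot R}\Rightarrow C$ becomes a cell of oplax actions $m\Rightarrow a$; combining it with the oplax right-unitor $a^0$ and the unit/counit of $i\dashv i^*$ yields the left unitor $\lambda$ and right unitor $\rho$ of the target skew monoidale. The opmonoidal multiplication $\mu:CC\Rightarrow C$ becomes a cell $a\circ(a1)\Rightarrow a$; pasted with the oplax associator $a^2$ of $a$ it delivers the skew-monoidale associator $\alpha$. In the converse direction, any right skew monoidale $(R,a,i,\alpha,\lambda,\rho)$ is in particular an oplax $R$-action of $R$ on itself---the oplax-action axioms \eqref{ax:OLA1}--\eqref{ax:OLA3} being implied by the skew-monoidale axioms---so the equivalence transports it back to an opmonoidal endo-arrow together with the cells carrying its monad structure.

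The main obstacle is axiom-matching: one must verify that the two monad axioms, together with the opmonoidality axioms \eqref{ax:OM1}--\eqref{ax:OM5} for $C$, $\mu$, and $\eta$, are jointly equivalent---under the dictionary above---to the five skew-monoidale axioms \eqref{ax:SKM1}--\eqref{ax:SKM5}. This reduces to a systematic pasting-diagram check using the biduality $R\dashv R\ot$ together with the universal property of the opmonadic adjunction $i\ob\dashv i\ot$; the opmonadic-friendly hypothesis is precisely what makes horizontal composition interact correctly with the reflexive coequalisers that implement opmonadicity in each hom category, so that the pieces assemble on the nose. Since the equivalence of Theorem~\ref{teo:Opmon_is_OplaxAct_Local} is built only from these two ingredients and all constructions are natural, no axiom on one side fails to be reflected on the other, and the bijective correspondence follows.
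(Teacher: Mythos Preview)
First, a framing point: the paper does not give its own proof of this theorem. It is attributed to Lack--Street (\cite[Theorem~5.2]{Lack2012}) and only sketched in the introduction, where the two-step recipe is stated directly: transpose the entire opmonoidal monad (arrow, multiplication, unit, and opmonoidal constraint cells) along the biduality $R\dashv R\ot$, then precompose everything with the opmonadic left adjoint $1i\ob 1$. In particular, the Lack--Street argument does \emph{not} pass through Theorem~\ref{teo:Opmon_is_OplaxAct_Local}; historically and logically the two results are independent inputs to the present paper.

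Your proposal contains a genuine gap. You assert that under the equivalence of Theorem~\ref{teo:Opmon_is_OplaxAct_Local} the composite $CC$ corresponds to the oplax action with underlying arrow $a\circ(a1):RRR\to R$. This is a type error: an oplax right $R$-action on $R$ must have underlying arrow $RR\to R$, not $RRR\to R$. The image of $CC$ under the equivalence is an arrow $RR\to R$, namely (up to coherence) $e1\cdot 1C\cdot 1C\cdot 1i\ob 1$, and this is \emph{not} expressible purely in terms of $a$. The paper makes exactly this point in the proof of Theorem~\ref{teo:Opmon_is_OplaxAct_Global}: ``there is no explicit description of the composition functor since the first equivalence relies on the existential part of the universal property of opmonadic adjunctions.'' In other words, the equivalence of Theorem~\ref{teo:Opmon_is_OplaxAct_Local} is an equivalence of \emph{categories}, not of monoidal categories, and the monoidal structure on $\OplaxAct(R;R)$ exists only by transport---so deriving what monoids on that side look like is circular unless one carries out the direct Lack--Street computation anyway.

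Your instinct is correct in spirit---this is essentially what Corollary~\ref{cor:MonadsOfOplaxActions} packages---but note that the corollary uses \emph{both} Theorem~\ref{teo:Opmon_is_OplaxAct_Global} \emph{and} the Lack--Street Theorem~\ref{teo:OpmonMnd_is_SkewMon} as inputs. To make your route self-contained you would need to prove directly that the equivalence of Theorem~\ref{teo:Opmon_is_OplaxAct_Local} is strong monoidal for composition, which amounts to redoing the Lack--Street argument. The paper's later machinery (the simplicial object $\OplaxAct(\mathcal{M})$ and Theorem~\ref{teo:MonadsAreSkewMonoidales}) is precisely designed to bypass this difficulty by replacing horizontal composition with $2$-simplices.
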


Hence, under the hypotheses of Theorem~\ref{teo:Opmon_is_OplaxAct_Global}, we automatically have a description of the monads in $\underline{\OplaxAct}(\mathcal{M})$.

\begin{cor}\label{cor:MonadsOfOplaxActions}
For every right autonomous opmonadic-friendly monoidal bicategory $\mathcal{M}$ such that every object $R$ has a chosen adjunction $i\dashv i^*$ whose opposite adjunction $i\ob\dashv i\ot$ is opmonadic;
\[
\vcenter{\hbox{\xymatrix{
R\ot\dtwocell_{i\ob}^{i\ot}{'\dashv}\\
I
}}}
\]
monads in $\underline{\OplaxAct}(\mathcal{M})$ are right skew monoidales whose unit has a right adjoint.
\end{cor}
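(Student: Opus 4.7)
The plan is to assemble the corollary by chaining the two previous theorems and exploiting the fact that the notion of a monad in a bicategory is invariant under biequivalence. The biequivalence $\Opmon^{\mathrm{e}}(\mathcal{M})\simeq\underline{\OplaxAct}(\mathcal{M})$ supplied by Theorem~\ref{teo:Opmon_is_OplaxAct_Global} transports monads from one side to the other, and Theorem~\ref{teo:OpmonMnd_is_SkewMon} then translates each opmonoidal monad on an enveloping monoidale into a right skew monoidale whose unit has a right adjoint.

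In more detail, I would first recall that monads in $\Opmon^{\mathrm{e}}(\mathcal{M})$ are by definition opmonoidal monads on enveloping monoidales $R\ot R$. Composing the biequivalence of Theorem~\ref{teo:Opmon_is_OplaxAct_Global} with this tautological identification yields a bijection between (isomorphism classes of) monads in $\underline{\OplaxAct}(\mathcal{M})$ and opmonoidal monads on enveloping monoidales. Then, at each object $R$ I would apply Theorem~\ref{teo:OpmonMnd_is_SkewMon} using the chosen opmonadic adjunction $i\ob\dashv i\ot$ guaranteed by the hypotheses, turning an opmonoidal monad on $R\ot R$ into a right skew monoidal structure on $R$ with unit $i$, which has the right adjoint $i^*$ by construction.

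Running the whole chain in reverse produces a monad in $\underline{\OplaxAct}(\mathcal{M})$ from any right skew monoidale on $R$ whose unit has a right adjoint: Theorem~\ref{teo:OpmonMnd_is_SkewMon} promotes such a skew monoidale to an opmonoidal monad on $R\ot R$, that is, a monad in $\Opmon^{\mathrm{e}}(\mathcal{M})$, and Theorem~\ref{teo:Opmon_is_OplaxAct_Global} then transports it back to a monad in $\underline{\OplaxAct}(\mathcal{M})$.

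The main (mild) obstacle will be the bookkeeping: making sure that the skew monoidal structure extracted at the end of this chain is the one actually encoded by the monad in $\underline{\OplaxAct}(\mathcal{M})$, and in particular that the identity endomorphism at $R$ in $\underline{\OplaxAct}(\mathcal{M})$ is the regular oplax action $i^*1$ of Lemma~\ref{lem:OneRightSkewMonoidale}. Since this compatibility is essentially built into the construction of $\underline{\OplaxAct}(\mathcal{M})$ inside the proof of Theorem~\ref{teo:Opmon_is_OplaxAct_Global}, no extra calculation should be required beyond observing that the identifications are natural in the monads.
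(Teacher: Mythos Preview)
Your proposal is correct and follows precisely the route the paper intends: the corollary is stated immediately after Theorems~\ref{teo:Opmon_is_OplaxAct_Global} and~\ref{teo:OpmonMnd_is_SkewMon} with only a \qed, the implicit argument being exactly the chain you describe (transport monads along the biequivalence of Theorem~\ref{teo:Opmon_is_OplaxAct_Global}, then apply Lack--Street's Theorem~\ref{teo:OpmonMnd_is_SkewMon}). Your additional remarks about the identity at $R$ being the regular oplax action $i^*1$ and the bookkeeping are accurate and already built into the proof of Theorem~\ref{teo:Opmon_is_OplaxAct_Global}, so indeed no further work is needed.
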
\qed

An explicit horizontal composition that does not require to go back and forth along the equivalence of Theorem~\ref{teo:Opmon_is_OplaxAct_Global} would allow us to give an elementary description of monads in $\underline{\OplaxAct}(\mathcal{M})$. In consequence, it would help us to understand which of the various parts of a right skew monoidale correspond to an ``underlying'' oplax action and which other parts play the roles of the multiplication and the unit of a monad of oplax actions. Hence, in what follows we avoid using that $\mathcal{M}$ is opmonadic-friendly. In which case, we do not have a bicategory but a simplicial object in $\Cat$ of oplax actions.

\section{A simplicial object in \texorpdfstring{$\Cat$}{Cat} of Oplax Actions}\label{sec:OplaxActions}
\subsection{Simplicial objects in \texorpdfstring{$\Cat$}{Cat}} Let us recall some of the basic definitions regarding simplicial objects in $\Cat$ and settle the notation that we shall be using. The 2-category $[\Delta^{\mathrm{op}},\Cat]$, where the square brackets denote \mbox{(2-)functors}, \mbox{(2-)natural} transformations, and modifications, is the 2-category of \emph{simplicial objects in $\Cat$}, simplicial morphisms, and simplicial transformations between them. A simplicial object $\mathbb{X}$ in $\Cat$ consists of categories $\mathbb{X}_n$ for each natural number, face functors $\partial$, and degeneracy functors $\cs$ that satisfy the usual simplicial equations \cite[pp. 179]{MacLane1997}.
\[
\cdots
\vcenter{\hbox{\xymatrix@!0@=30mm{
\mathbb{X}_3\ar@<9mm>[r]|-{\partial_0}\ar@<3mm>[r]|-{\partial_1}\ar@<-3mm>[r]|-{\partial_2}\ar@<-9mm>[r]|-{\partial_3}&\mathbb{X}_2\ar@<6mm>[r]|-{\partial_0}\ar[r]|-{\partial_1}\ar@<-6mm>[r]|-{\partial_2}\ar@<-6mm>[l]|-{\cs_0}\ar[l]|-{\cs_1}\ar@<6mm>[l]|-{\cs_2}&\mathbb{X}_1\ar@<3mm>[r]|-{\partial_0}\ar@<-3mm>[r]|-{\partial_1}\ar@<-3mm>[l]|-{\cs_0}\ar@<3mm>[l]|-{\cs_1}&\mathbb{X}_0\ar[l]|-{\cs_0}
}}}
\]
The category of sets may be thought as a locally discrete 2-category, and as such, there is a 2-functor $\xymatrix@1@C=5mm{\Set\ar[r]&\Cat}$ that sends each set to its corresponding discrete category. It induces a forgetful 2-functor which takes a simplicial object in $\Cat$ to the simplicial set that keeps only the set of objects of each category of simplices in each dimension. We refer to it as the \emph{underlying simplicial set} of a simplicial object in $\Cat$.
\[
\vcenter{\hbox{\xymatrix@!0@R=6mm@C=30mm{
[\Delta^{\mathrm{op}},\Cat]\ar[r]&[\Delta^{\mathrm{op}},\Set]\\
\mathbb{X}\ar@{|->}[r]&\mathbb{X}^{(0)}
}}}
\]
The term \emph{simplicial category} is commonly used in the literature to refer to a category enriched in simplicial sets, in general these are different to simplicial objects in $\Cat$. In fact, categories enriched in simplicial sets are in bijection with those simplicial objects in $\Cat$ whose underlying simplicial set is a constant functor $\xymatrix@1@C=5mm{\Delta^{\mathrm{op}}\ar[r]&\Set}$.

Let $\Delta_{\leq n}$ be the standard $n$-simplex viewed as a category, the inclusion $\xymatrix@1@C=5mm{\Delta_{\leq n}\ar[r]&\Delta}$ induces the $n$-truncation functor
\[
\vcenter{\hbox{\xymatrix{
\tr_n:[\Delta^{\mathrm{op}},\Cat]\ar[r]&[\Delta_{\leq n}^{\mathrm{op}},\Cat]
}}}
\]
which forgets about the $m$-simplices for all $m>n$ in a simplicial object in $\Cat$. It has a left adjoint and a right adjoint which are called the \emph{$n$-skeleton} and \emph{$n$-coskeleton}, these may be calculated via right and left Kan extensions.
\[
\sk_n\dashv\tr_n\dashv\cosk_n
\]
One may also describe them inductively; for all $m\leq n$ the categories of $m$-simplices of both the $n$-skeleton and $n$-coskeleton are the equal to $\mathbb{X}_m$. And for $m>n$ the $n$-skeleton freely adds all degenerate $m$-simplices for all $(m-1)$-simplices in $\sk_n(\mathbb{X})_{(m-1)}$, and the $n$-coskeleton freely adds as its $m$-simplices unique fillers for all $(m-1)$-spheres in $\cosk_n(\mathbb{X})_{(m-1)}$; thus $(\sk_n\mathbb{X})_m\subset(\cosk_n\mathbb{X})_m$. We say that a simplicial object in $\Cat$ is \emph{$n$-(co)skeletal} if it is isomorphic to its own $n$-(co)skeleton in  $[\Delta^{\mathrm{op}},\Cat]$.

The 2-category of simplicial objects in $\Cat$, apart from having the notions of equality, isomorphism, and equivalence, has a concept of weak equivalence.
\begin{defi}
Let be $\mathbb{X}$ and $\mathbb{Y}$ two simplicial objects in $\Cat$. A simplicial morphism $\xymatrix@1@C=5mm{F:\mathbb{X}\ar[r]&\mathbb{Y}}$ is a \emph{weak equivalence} if for every $n$ the component of $F$ at $n$
\[
\xymatrix{\mathbb{X}_n\ar[r]_-{\simeq}^-{F_n}&\mathbb{Y}_n}
\]
is an equivalence of categories.
\end{defi}

To have a weak equivalence $\xymatrix@1@C=5mm{F:\mathbb{X}\ar[r]&\mathbb{Y}}$ of simplicial objects in $\Cat$ is not the same as having an equivalence in the 2-category $[\Delta^{\mathrm{op}},\Cat]$, because although it is true that for every $n$ we have pseudoinverses $\xymatrix@1@C=5mm{G_n:\mathbb{Y}_n\ar[r]&\mathbb{X}_n}$, in general these do not constitute a simplicial morphism, but only a pseudonatural transformation. So in other words, a simplicial morphism $F$ is a weak equivalence if it is an equivalence when viewed in the bicategory $[\Delta^{\mathrm{op}},\Cat]_{\mathrm{ps}}$ of 2-functors, pseudonatural transformations, and modifications.

Let $\nLax$ be the 2-category of bicategories, normal lax functors, and icons. The following definition may be found in \cite[Definition 5.2]{Carrasco2010}.
\begin{defi}
The \emph{lax-2-nerve} of a bicategory $\mathcal{B}$ is the simplicial object in $\Cat$ defined by
\[
\vcenter{\hbox{\xymatrix@!0@R=6mm@C=20mm{
\Delta^{\mathrm{op}}\ar[r]&\Cat\\
[n]\ar@{|->}[r]&\nLax([n],\mathcal{B})
}}}
\]
\end{defi}
The underlying simplicial set of the lax-2-nerve of a bicategory is the nerve of a bicategory (also called the \emph{Street-nerve} or \emph{1-nerve}), and it is 3-coskeletal. One may see that the lax-2-nerve is also 3-coskeletal by noticing that the category of 3-simplices is the full subcategory of the 3-coskeleton on the 3-simplices of the 1-nerve.

\subsection{Simplices of Oplax Actions}\label{subsec:OplaxActions}
In an attempt to give a more explicit description of the bicategory $\underline{\OplaxAct}(\mathcal{M})$ found in Corollary~\ref{cor:MonadsOfOplaxActions} without relying on the structure of $\Opmon(\mathcal{M})$, the first thing one tries is to define a composition of oplax actions, which is a functor
\[
\vcenter{\hbox{\xymatrix{
\OplaxAct(S;T)\times\OplaxAct(R;S)\ar[rr]^-{?}&&\OplaxAct(R;T)
}}}
\]
whose argument takes two oplax actions $\xymatrix@1@C=5mm{SR\ar[r]^-{s}&S}$ and $\xymatrix@1@C=5mm{TS\ar[r]^-{t}&T}$. Unfortunately these oplax actions do not seem to compose in any straightforward way. This leaves us unable to access a horizontal composition of oplax actions directly, which exists in the case where $\mathcal{M}$ satisfies the hypotheses of Theorem~\ref{teo:Opmon_is_OplaxAct_Global}. To get around this problem, instead of a bicategory of oplax actions, we describe a simplicial object in $\Cat$ of oplax actions. We denote it by $\OplaxAct$ (without the underline), or $\OplaxAct(\mathcal{M})$ if one wishes to specify the ambient bicategory. This simplicial object in $\Cat$ has two important properties, which solve our initial problem of relating Theorems~\ref{teo:Opmon_is_OplaxAct_Global} and \ref{teo:OpmonMnd_is_SkewMon}:
\begin{enumerate}[label={(\arabic*)}]
\item There is an appropriate notion of monad in a simplicial set ---which for the 1-nerve of a bicategory $\mathcal{B}$ is precisely a monad in $\mathcal{B}$--- and a monad in the underlying simplicial set of the simplicial object in $\Cat$ of oplax actions $\OplaxAct(\mathcal{M})^{(0)}$ is precisely a right skew monoidale whose unit has a right adjoint.
\item It is weakly equivalent to the lax-2-nerve of a bicategory of opmonoidal arrows, assuming that $\mathcal{M}$ is a right autonomous opmonadic-friendly monoidal bicategory with chosen right biduals $R\ot$ and chosen left adjoints $\xymatrix@1@C=5mm{i:I\ar[r]&R}$ for every object $R$.
\end{enumerate}
We know that point (1) is true in the case where $\mathcal{M}$ satisfies the hypothesis of Corollary~\ref{cor:MonadsOfOplaxActions}, but point (1) is true in a far more general context: for every monoidal bicategory $\mathcal{M}$ with no extra assumptions whatsoever.

An entirely valid question at this point is: What is the benefit of using simplicial objects in $\Cat$ over mere simplicial sets? And while it is true that for point (1) one only needs simplicial sets and the 1-nerve of a bicategory, for point (2) this is not the case. The extra dimension is crucial to express the equivalences (and not mere isomorphisms) such as the one from Theorem~\ref{teo:Opmon_is_OplaxAct_Local}, which is basically the case of 1-simplices. To avoid working with this extra categorical dimension on our simplicial objects, one might use a strict monoidal 2-category instead of a monoidal bicategory (or a Gray monoid) and with opmonadicity up-to-isomorphism instead of up-to-equivalence. But we do not intend to restrict in this way, as this will rule out our primary examples $\mathcal{M}=\Mod_k$, $\mathcal{M}=\Span^{\mathrm{co}}$ or more generally $\mathcal{M}=\Mod(\mathcal{V})$ for a symmetric monoidal category $\mathcal{V}$.

The rest of this section is dedicated to the definition of $\OplaxAct(\mathcal{M})$. It is a 3-coskeletal simplicial object in $\Cat$, so it is enough to define the first four categories of simplices and their respective face and degeneracy functors.
\[
\vcenter{\hbox{\xymatrix@!0@=30mm{
\OplaxAct_3\ar@<9mm>[r]|-{\partial_0}\ar@<3mm>[r]|-{\partial_1}\ar@<-3mm>[r]|-{\partial_2}\ar@<-9mm>[r]|-{\partial_3}&\OplaxAct_2\ar@<6mm>[r]|-{\partial_0}\ar[r]|-{\partial_1}\ar@<-6mm>[r]|-{\partial_2}\ar@<-6mm>[l]|-{\cs_0}\ar[l]|-{\cs_1}\ar@<6mm>[l]|-{\cs_2}&\OplaxAct_1\ar@<3mm>[r]|-{\partial_0}\ar@<-3mm>[r]|-{\partial_1}\ar@<-3mm>[l]|-{\cs_0}\ar@<3mm>[l]|-{\cs_1}&\OplaxAct_0\ar[l]|-{\cs_0}
}}}
\]

\subsubsection{0-simplices}
\begin{defi}
The category of 0-simplices $\OplaxAct_0$ is the discrete category whose objects are pairs $(R,i\dashv i^*)$ of an object $R$ and an adjunction $i\dashv i^*$ in $\mathcal{M}$.
\[
\vcenter{\hbox{\xymatrix{
R\dtwocell_{i}^{i^*}{'\dashv}\\
I
}}}
\]
\end{defi}

\subsubsection{1-simplices}
\begin{defi}
The category of 1-simplices $\OplaxAct_1$ has as objects all oplax actions on the 0-simplices, and as arrows all cells of oplax actions. Explicitly, $\OplaxAct_1$ is the coproduct in $\Cat$ over the set of pairs of 0-simplices $((R,i\dashv i^*),(S,j\dashv j^*))$ of the categories of oplax actions $\OplaxAct(R;S)$.
\[
\OplaxAct_1:=\coprod_{\substack{(R,i\dashv i^*)\\(S,j\dashv j^*)}}\OplaxAct(R;S)
\]
\end{defi}

The picture of oplax actions with a dashed arrow $\xymatrix@1@C=5mm{r:R\ar[r]|-@{|}&S}$ was done on purpose to resemble the geometrical shape of a standard 1-simplex, this shows easily what the \emph{face functors} are: $\partial_0(r)=(S,j\dashv j^*)$ and $\partial_1(r)=(R,i\dashv i^*)$. The \emph{degeneracy functor} is defined for a 0-simplex $(R,i\dashv i^*)$ as the regular oplax $R$-action on $R$ of the right skew monoidal structure on $R$ induced by the adjunction $i\dashv i^*$, see Lemma~\ref{lem:OneRightSkewMonoidale}. In short, $\cs_0(R)$ is  $\xymatrix@1@C=5mm{i^*1:RR\ar[r]&R}$ with structure cells as shown below.
\[
\vcenter{\hbox{\xymatrix@!0@=15mm{
RRR\ar[r]^-{i^*11}\ar[d]_-{1i^*1}&RR\ar[d]^-{i^*1}\\
RR\ar[r]_-{i^*1}\ar@{}[ru]|-*[@ru]{\cong}&R
}}}
\qquad
\vcenter{\hbox{\xymatrix@!0{
RR\ar[dd]_-{i^*1}\ar@{}[rd]|-<<<<*[@rd]{\cong}&&R\ar[ll]_-{1i}\ar[dl]^-{i^*}\ar@/^1cm/[ddll]^-{1}\xtwocell[ddll]{}<>{^<-3>\varepsilon}\\
&I\ar[dl]^-i&\\
R&&
}}}
\]

\subsubsection{2-simplices}
For pedagogical purposes relevant in Section~\ref{sec:OplaxActionsOpmonoidalArrows} we first define categories of 2-simplices with fixed 0-faces.

\begin{defi}
Given three 0-simplices $(R,i\dashv i^*)$, $(S,j\dashv j^*)$, and $(T,k\dashv k^*)$, denote by $\OplaxAct(R;S;T)$ the category whose objects are quadruples $(s,t,v,\alpha)$, where $\xymatrix@1@C=5mm{s:SR\ar[r]&S}$, $\xymatrix@1@C=5mm{t:TS\ar[r]&T}$, and $\xymatrix@1@C=5mm{v:TR\ar[r]&T}$ are oplax actions; and $\alpha$ is a square in $\mathcal{M}$ that we depict as a double arrow with a dash inside a triangle of dashed arrows,
\[
\vcenter{\hbox{\xymatrix@!0@C=12mm@R=5mm{
R\ar[rd]_-{s}|-@{|}\ar@/^4mm/[rr]^-{v}|-@{|}\xtwocell[rr]{}<>{^\alpha}\ar@{}@<-.5mm>[rr]|{-}&&T\\
&S\ar[ru]_-{t}|-@{|}&
}}}
\qquad:=\qquad
\vcenter{\hbox{\xymatrix@!0@=15mm{
TSR\ar[r]^-{t1}\ar[d]_-{1s}\xtwocell[rd]{}<>{^\alpha}&TR\ar[d]^-{v}\\
TS\ar[r]_-{t}&T
}}}
\]
satisfying the following three axioms.
\begin{align}
\tag{2SIM1}\label{ax:2SIM1}
\vcenter{\hbox{\xymatrix@!0@C=15mm{
&TSR\ar[rd]^-{t1}&\\
TSSR\ar[ru]^-{t11}\ar[dd]_-{11s}\ar[rd]_-{1j^*11}\xtwocell[rr]{}<\omit>{^t^21\ }&&TR\ar[dd]^-{v}\\
&TSR\ar[dd]^-{1s}\ar[ru]^-{t1}\xtwocell[rd]{}<\omit>{^\alpha}&\\
TSS\ar[rd]_-{1j^*1}\ar@{}[ru]|*[@]{\cong}&&T\\
&TS\ar[ru]_-{t}&
}}}
\quad&=\quad
\vcenter{\hbox{\xymatrix@!0@C=15mm{
&TSR\ar[rd]^-{t1}\ar[dd]_-{1s}\xtwocell[rddd]{}<\omit>{^\alpha}&\\
TSSR\ar[ru]^-{t11}\ar[dd]_-{11s}&&TR\ar[dd]^-{v}\\
&TS\ar[rd]^-{t}&\\
TSS\ar[rd]_-{1j^*1}\ar[ru]_-{t1}\ar@{}[ruuu]|*[@]{\cong}\xtwocell[rr]{}<\omit>{^t^2}&&T\\
&TS\ar[ru]_-{t}&
}}}
\\
\tag{2SIM2}\label{ax:2SIM2}
\vcenter{\hbox{\xymatrix@!0@C=15mm{
&TRR\ar[rd]^-{v1}&\\
TSRR\ar[ru]^-{t11}\ar[dd]_-{11i^*1}\ar[rd]_-{1s1}\xtwocell[rddd]{}<\omit>{^1s^2}\xtwocell[rr]{}<\omit>{^\alpha 1\ }&&TR\ar[dd]^-{v}\\
&TSR\ar[dd]^-{1s}\ar[ru]^-{t1}\xtwocell[rd]{}<\omit>{^\alpha}&\\
TSR\ar[rd]_-{1s}&&T\\
&TS\ar[ru]_-{t}&
}}}
\quad&=\quad
\vcenter{\hbox{\xymatrix@!0@C=15mm{
&TRR\ar[rd]^-{v1}\ar[dd]_-{1i^*1}\xtwocell[rddd]{}<\omit>{^v^2}&\\
TSRR\ar[ru]^-{t11}\ar[dd]_-{11i^*1}&&TR\ar[dd]^-{v}\\
&TR\ar[rd]^-{v}&\\
TSR\ar[rd]_-{1s}\ar[ru]_-{t1}\ar@{}[ruuu]|*[@]{\cong}\xtwocell[rr]{}<\omit>{^\alpha}&&T\\
&TS\ar[ru]_-{t}&
}}}
\\
\tag{2SIM3}\label{ax:2SIM3}
\vcenter{\hbox{\xymatrix@!0@C=15mm{
&T\ar@/^7mm/[rddd]^-{1}&\\
TS\ar[ru]^-{t}\ar[dd]_-{11i}\ar@/^7mm/[rddd]^-{1}\xtwocell[rddd]{}<\omit>{^1s^0}&&\\
&&\\
TSR\ar[rd]_-{1s}&&T\\
&TS\ar[ru]_-{t}&
}}}
\quad&=\quad
\vcenter{\hbox{\xymatrix@!0@C=15mm{
&T\ar[dd]_-{1i}\ar@/^7mm/[rddd]^-{1}\xtwocell[rddd]{}<\omit>{^v^0}&\\
TS\ar[ru]^-{t}\ar[dd]_-{11i}&&\\
&TR\ar[rd]^-{v}&\\
TSR\ar[rd]_-{1s}\ar[ru]_-{t1}\ar@{}[ruuu]|*[@]{\cong}\xtwocell[rr]{}<\omit>{^\alpha}&&T\\
&TS\ar[ru]_-{t}&
}}}
\end{align}
Arrows in $\OplaxAct(R;S;T)$ are triples $\xymatrix@1@C=5mm{(\sigma,\tau,\nu):(s,t,v,\alpha)\ar[r]&(s',t',v',\alpha')}$ of cells of oplax actions
\[
\vcenter{\hbox{\xymatrix{
R\ar@/^3mm/[r]|-@{|}^-{s'}\ar@/_3mm/[r]|-@{|}_-{s}\xtwocell[r]{}<>{^\sigma}\ar@{}@<-.5mm>[r]|{-}&S
}}}
\qquad
\vcenter{\hbox{\xymatrix{
S\ar@/^3mm/[r]|-@{|}^-{t'}\ar@/_3mm/[r]|-@{|}_-{t}\xtwocell[r]{}<>{^\tau}\ar@{}@<-.5mm>[r]|{-}&T
}}}
\qquad
\vcenter{\hbox{\xymatrix{
R\ar@/^3mm/[r]|-@{|}^-{v'}\ar@/_3mm/[r]|-@{|}_-{v}\xtwocell[r]{}<>{^\nu}\ar@{}@<-.5mm>[r]|{-}&T
}}}
\]
satisfying the following axiom.
\begin{equation}
\tag{2SIM4}\label{ax:2SIM4}
\vcenter{\hbox{\xymatrix@!0@=19mm{
TSR\ar@/^3mm/[r]^-{t'1}\ar@/^3mm/[d]^-{1s'}\ar@/_3mm/[d]_-{1s}\xtwocell[d]{}<>{^1\sigma}\xtwocell[rd]{}<>{^<-2>\alpha'}&TR\ar@/^3mm/[d]^-{v'}\\
TS\ar@/^3mm/[r]^-{t'}\ar@/_3mm/[r]_-{t}\xtwocell[r]{}<>{^\tau}&T
}}}
\quad=\quad
\vcenter{\hbox{\xymatrix@!0@=19mm{
TSR\ar@/^3mm/[r]^-{t'1}\ar@/_3mm/[r]_-{t1}\xtwocell[r]{}<>{^\tau 1\ }\ar@/_3mm/[d]_-{1s}\xtwocell[rd]{}<>{^<2>\alpha}&TR\ar@/^3mm/[d]^-{v'}\ar@/_3mm/[d]_-{v}\xtwocell[d]{}<>{^\nu}\\
TS\ar@/_3mm/[r]_-{t}&T
}}}
\end{equation}
Composition and identities are calculated as in the category below.
\[
\OplaxAct(R;S)\times\OplaxAct(S;T)\times\OplaxAct(R;T)
\]
\end{defi}

\begin{defi}
The category of 2-simplices $\OplaxAct_2$ is the coproduct in $\Cat$ over the set of triples of 0-simplices $((R,i\dashv i^*),(S,j\dashv j^*),(T,k\dashv k^*))$ of the categories $\OplaxAct(R;S;T)$.
\[
\OplaxAct_2:=\coprod_{\substack{(R,i\dashv i^*)\\(S,j\dashv j^*)\\(T,k\dashv k^*)}}\OplaxAct(R;S;T)
\]
\end{defi}

Again, the picture with dashes of a 2-simplex is set to resemble the geometrical shape of a standard 2-simplex and remind us that the \emph{face functors} are given by $\partial_0(\alpha)=t$, $\partial_1(\alpha)=v$, and $\partial_2(\alpha)=s$. For an oplax action $\xymatrix@1@C=5mm{t:S\ar[r]|-@{|}&T}$ the \emph{degeneracy functors} are defined as follows,
\begin{align*}
\vcenter{\hbox{\xymatrix@!0@C=12mm@R=5mm{
S\ar[rd]_-{\cs_0(S)}|-@{|}\ar@/^4mm/[rr]^-{t}|-@{|}\xtwocell[rr]{!<3mm,0mm>}<>{^\cs_0(t)\quad\ }\ar@{}@<-.5mm>[rr]!<3mm,0mm>|{-}&&T\\
&S\ar[ru]_-{t}|-@{|}&
}}}
\;&=\;
\vcenter{\hbox{\xymatrix@!0@=15mm{
TSS\ar[r]^-{t1}\ar[d]_-{1j^*1}\xtwocell[rd]{}<>{^t^2}&TS\ar[d]^-{t}\\
TS\ar[r]_-{t}&T
}}}\\
\vcenter{\hbox{\xymatrix@!0@C=12mm@R=5mm{
S\ar[rd]_-{t}|-@{|}\ar@/^4mm/[rr]^-{t}|-@{|}\xtwocell[rr]{!<3mm,0mm>}<>{^\cs_1(t)\quad\ }\ar@{}@<-.5mm>[rr]!<3mm,0mm>|{-}&&T\\
&T\ar[ru]_-{\cs_0(T)}|-@{|}&
}}}
\;&=\;
\vcenter{\hbox{\xymatrix@!0@=15mm{
TTS\ar[r]^-{k^*11}\ar[d]_-{1t}\ar@{}[d]_-{\phantom{1j^*1}}&TS\ar[d]^-{t}\\
TT\ar[r]_-{k^*1}\ar@{}[ru]|*[@]{\cong}&T
}}}
\end{align*}
and the axioms for a 2-simplex are trivially satisfied. Similarly for a cell of oplax actions $\xymatrix@1@C=5mm{\tau:t\ar@2[r]&t'}$ the degeneracies are $\cs_0(\tau)=(\id_{\cs_0(S)},\tau,\tau)$ and $\cs_1(\tau)=(\tau,\id_{\cs_0(T)},\tau)$.

\subsubsection{3-simplices} In the same way as for 2-simplices, we first define categories of 3-simplices with fixed 0-faces.
\begin{defi}
Given four 0-simplices $(R,i\dashv i^*)$, $(S,j\dashv j^*)$, $(T,k\dashv k^*)$, and $(U,l\dashv l^*)$, define the category $\OplaxAct(R;S;T;U)$ as the full subcategory of
\begin{multline*}
\OplaxAct(R;S;T)\times\OplaxAct(R;S;U)\\\times\OplaxAct(R;T;U)\times\OplaxAct(S;T;U)
\end{multline*}
consisting of the quadruples $\Gamma=(\alpha,\beta,\gamma,\zeta)$ of 2-simplices of oplax actions whose boundaries match appropriately to be arranged in a tetrahedral configuration,
\[
\vcenter{\hbox{\xymatrix@!0@C=8mm@R=10mm{
R\ar[rd]_-{s}|-@{|}\ar@/^4mm/[rrrd]^-{v}|-@{|}\ar@/^5mm/[rrrr]^-{w}|-@{|}\xtwocell[rrrd]{}<>{^\alpha}\ar@{}@<-.7mm>[rrrd]|*[@]{-}&&{\xtwocell[rr]{}<>{^\gamma}}\ar@{}@<-.5mm>[rr]|{-}&&U\\
&S\ar[rr]_-{t}|-@{|}&&T\ar[ru]_-{u}|-@{|}&
}}}
\vcenter{\hbox{\xymatrix@!0{
\ar@3[r]^-{\Gamma}|-@{|}&
}}}
\vcenter{\hbox{\xymatrix@!0@C=8mm@R=10mm{
R\ar[dr]_-{s}|-@{|}\ar@/^5mm/[rrrr]^-{w}|-@{|}\xtwocell[rrr]{}<>{^\beta}\ar@{}@<-.5mm>[rrr]|{-}&&&&U\\
&S\ar[rr]_-{t}|-@{|}\ar@/^5mm/[rrru]^-{x}|-@{|}\xtwocell[rrru]{}<>{^\zeta}\ar@{}@<-.5mm>[rrru]|*[@]{-}&&T\ar[ru]_-{u}|-@{|}&
}}}
\]
and that satisfy the following ``composing'' condition.
\begin{equation}
\tag{3SIM}\label{ax:3SIM}
\vcenter{\hbox{\xymatrix@!0@C=15mm{
&USR\ar[rd]^-{x1}&\\
UTSR\ar[ru]^-{u11}\ar[dd]_-{11s}\ar[rd]_-{1t1}\xtwocell[rddd]{}<\omit>{^1\alpha\ }\xtwocell[rr]{}<\omit>{^\zeta 1}&&UR\ar[dd]^-{w}\\
&UTR\ar[dd]^-{1v}\ar[ru]^-{u1}\xtwocell[rd]{}<\omit>{^\gamma}&\\
UTS\ar[rd]_-{1t}&&U\\
&UT\ar[ru]_-{u}&
}}}
\quad=\quad
\vcenter{\hbox{\xymatrix@!0@C=15mm{
&USR\ar[rd]^-{x1}\ar[dd]_-{1s}\xtwocell[rddd]{}<\omit>{^\beta}&\\
UTSR\ar[ru]^-{u11}\ar[dd]_-{11s}&&UR\ar[dd]^-{w}\\
&US\ar[rd]^-{x}&\\
UTS\ar[rd]_-{1t}\ar[ru]_-{u1}\ar@{}[ruuu]|*[@]{\cong}\xtwocell[rr]{}<\omit>{^\zeta}&&U\\
&UT\ar[ru]_-{u}&
}}}
\end{equation}
\end{defi}

\begin{defi}
The category of 3-simplices $\OplaxAct_3$ is the coproduct in $\Cat$ over the set of quadruples of 0-simplices $((R,i\dashv i^*),(S,j\dashv j^*),(T,k\dashv k^*),(U,l\dashv l^*))$ of the categories $\OplaxAct(R;S;T;U)$.
\[
\OplaxAct_3:=\coprod_{\substack{(R,i\dashv i^*)\\(S,j\dashv j^*)\\(T,k\dashv k^*)\\(U,l\dashv l^*)}}\OplaxAct(R;S;T;U)
\]
\end{defi}
\begin{rem}
The condition ``to match in a tetrahedral configuration'' in the definition of the categories $\OplaxAct(R;S;T;U)$ means that $\OplaxAct_3$ is not only a full subcategory of quadruples of objects in $\OplaxAct_2$ but a full subcategory of the 3-coskeleton of the 2-truncated simplicial object in $\Cat$ of the simplices of oplax actions defined so far.
\[
\vcenter{\hbox{\xymatrix@!0@=30mm{
\OplaxAct_2\ar@<6mm>[r]|-{\partial_0}\ar[r]|-{\partial_1}\ar@<-6mm>[r]|-{\partial_2}&\OplaxAct_1\ar@<3mm>[r]|-{\partial_0}\ar@<-3mm>[r]|-{\partial_1}\ar@<-3mm>[l]|-{\cs_0}\ar@<3mm>[l]|-{\cs_1}&\OplaxAct_0\ar[l]|-{\cs_0}
}}}
\]
Hence, a quadruple of 2-simplices in this 2-coskeleton is in $\OplaxAct(\mathcal{M})$ if and only if it satisfies condition~\eqref{ax:3SIM}, so in practice we say that a 3-simplex is a quadruple $(\alpha,\beta,\gamma,\zeta)$ satisfying the property \eqref{ax:3SIM}.
\end{rem}
Once more, for a 3-simplex $\Gamma$ as above, the tetrahedral picture with dashes is set up to resemble a standard 3-simplex and to remind us of the fact that the \emph{face functors} are defined on objects as $\partial_0(\Gamma)=\alpha$, $\partial_1(\Gamma)=\beta$, $\partial_2(\Gamma)=\gamma$, and $\partial_3(\Gamma)=\zeta$, and in a similar way on arrows. For a 2-simplex $(s,t,v,\alpha)$ the \emph{degeneracy functors} are defined on objects as: $\cs_0(\alpha)$ is axiom~\eqref{ax:2SIM2} for $\alpha$,
\[
\vcenter{\hbox{\xymatrix@!0@C=8mm@R=10mm{
R\ar[rd]_-{\cs_0(R)}|-@{|}\ar@/^4mm/[rrrd]^-{s}|-@{|}\ar@/^5mm/[rrrr]^-{v}|-@{|}\xtwocell[rrrd]{!<2mm,0mm>}<>{^\cs_0(s)\quad\ }\ar@{}@<-.7mm>[rrrd]!<2mm,0mm>|*[@]{-}&&{\xtwocell[rr]{}<>{^\alpha}}\ar@{}@<-.5mm>[rr]|{-}&&T\\
&R\ar[rr]_-{s}|-@{|}&&S\ar[ru]_-{t}|-@{|}\ar@{}[ru]_-{\phantom{\cs_0(T)}}&
}}}
\vcenter{\hbox{\xymatrix@!0{
\ar@3[r]|-@{|}^-{\cs_0(\alpha)}&
}}}
\vcenter{\hbox{\xymatrix@!0@C=8mm@R=10mm{
R\ar[rd]_-{\cs_0(R)}|-@{|}\ar@/^5mm/[rrrr]^-{v}|-@{|}\xtwocell[rrr]{}<>{^\cs_0(v)\quad}\ar@{}@<-.5mm>[rrr]|{-}&&&&T\\
&R\ar[rr]_-{s}|-@{|}\ar@/^5mm/[rrru]^-{v}|-@{|}\xtwocell[rrru]{}<>{^\alpha}\ar@{}@<-.5mm>[rrru]|*[@]{-}&&S\ar[ru]_-{t}|-@{|}\ar@{}[ru]_-{\phantom{\cs_0(T)}}&
}}}
\]
$\cs_1(\alpha)$ is axiom~\eqref{ax:2SIM1} for $\alpha$,
\[
\vcenter{\hbox{\xymatrix@!0@C=8mm@R=10mm{
R\ar[rd]_-{\phantom{\cs_0(R)}}\ar[rd]_-{s}|-@{|}\ar@/^4mm/[rrrd]^-{s}|-@{|}\ar@/^5mm/[rrrr]^-{v}|-@{|}\xtwocell[rrrd]{!<2mm,0mm>}<>{^\cs_1(s)\quad\ }\ar@{}@<-.7mm>[rrrd]!<2mm,0mm>|*[@]{-}&&{\xtwocell[rr]{}<>{^\alpha}}\ar@{}@<-.5mm>[rr]|{-}&&T\\
&S\ar[rr]_-{\cs_0(s)}|-@{|}&&S\ar[ru]_-{t}|-@{|}\ar@{}[ru]_-{\phantom{\cs_0(R)}}&
}}}
\vcenter{\hbox{\xymatrix@!0{
\ar@3[r]|-@{|}^{\cs_1(\alpha)}&
}}}
\vcenter{\hbox{\xymatrix@!0@C=8mm@R=10mm{
R\ar[rd]_-{\phantom{\cs_0(R)}}\ar[rd]_-{s}|-@{|}\ar@/^5mm/[rrrr]^-{v}|-@{|}\xtwocell[rrr]{}<>{^\alpha}\ar@{}@<-.5mm>[rrr]|{-}&&&&T\\
&S\ar[rr]_-{\cs_0(S)}|-@{|}\ar@/^5mm/[rrru]^-{t}|-@{|}\xtwocell[rrru]{}<>{^\cs_0(t)\quad\ }\ar@{}@<-.5mm>[rrru]|*[@]{-}&&S\ar[ru]_-{t}|-@{|}\ar@{}[ru]_-{\phantom{\cs_0(T)}}&
}}}
\]
$\cs_2(\alpha)$ is an instance of the coherence law for the interchange isomorphisms.
\[
\vcenter{\hbox{\xymatrix@!0@C=8mm@R=10mm{
R\ar[rd]_-{\phantom{\cs_0(R)}}\ar[rd]_-{s}|-@{|}\ar@/^4mm/[rrrd]^-{v}|-@{|}\ar@/^5mm/[rrrr]^-{v}|-@{|}\xtwocell[rrrd]{}<>{^\alpha}\ar@{}@<-.7mm>[rrrd]|*[@]{-}&&{\xtwocell[rr]{}<>{^\cs_1(v)\quad\ }}\ar@{}@<-.5mm>[rr]|{-}&&T\\
&S\ar[rr]_-{t}|-@{|}&&T\ar[ru]_-{\cs_0(T)}|-@{|}&
}}}
\vcenter{\hbox{\xymatrix@!0{
\ar@3[r]|-@{|}^-{\cs_2(\alpha)}&
}}}
\vcenter{\hbox{\xymatrix@!0@C=8mm@R=10mm{
R\ar[rd]_-{\phantom{\cs_0(R)}}\ar[rd]_-{s}|-@{|}\ar@/^5mm/[rrrr]^-{v}|-@{|}\xtwocell[rrr]{}<>{^\alpha}\ar@{}@<-.5mm>[rrr]|{-}&&&&T\\
&S\ar[rr]_-{t}|-@{|}\ar@/^5mm/[rrru]^-{t}|-@{|}\xtwocell[rrru]{}<>{^\cs_1(t)\quad\ }\ar@{}@<-.5mm>[rrru]|*[@]{-}&&T\ar[ru]_-{\cs_0(T)}|-@{|}&
}}}
\]

This finishes the definition of $\OplaxAct(\mathcal{M})$, now we may address points (1) and (2) made at the beginning of this section. This is done in Sections~\ref{sec:MonadsOfOplaxActions} and \ref{sec:OplaxActionsOpmonoidalArrows}.
\section{Monads of Oplax Actions and Skew Monoidales}\label{sec:MonadsOfOplaxActions}
In \cite{Buckley2014}, \cite{Buckley2016}, \cite{Greenspan2015}, and \cite{Bohm2017} the authors classify various monoidal-like notions as simplicial morphisms out of a particular simplicial set $\mathbb{C}$, whose name is \emph{the Catalan simplicial set}. One might think of the Catalan simplicial set as the ``free living monoidal-like structure'' in the sense that one decides which kind of monoidal-like structure to get by choosing different kinds of nerves. For example, simplicial morphisms from $\mathbb{C}$ into the \emph{1-nerve} of a bicategory are in bijection with monads in the bicategory. By using other kinds of nerves as the target of a simplicial morphism one may get monoids in a monoidal category, or monoidal categories in $\Cat$, or skew monoidales in a monoidal bicategory, and so on. The Catalan simplicial set owes its name is to the fact that the number of $n$-simplices is the $n$th Catalan number.
\begin{defi}
The \emph{Catalan simplicial set} $\mathbb{C}$ is the simplicial set that has a unique 0-simplex $\star$, two 1-simplices; one degenerate $\xymatrix@1@C=5mm{s_0(\star)=e:\star\ar[r]&\star}$, and a unique non-degenerate $\xymatrix@1@C=5mm{c:\star\ar[r]&\star}$; and the rest is built by coskeletality.
\end{defi}

The 1-nerve of a bicategory is 3-coskeletal, and so it is reasonable to say:

\begin{defi}
A \emph{monad} in a 3-coskeletal simplicial set $\mathbb{X}$ is a simplicial morphism as shown.
\[
\vcenter{\hbox{\xymatrix{
\mathbb{C}\ar[r]&\mathbb{X}
}}}
\]
\end{defi}

\subsection{Monads of oplax actions}
In the previous section we built a simplicial object in $\Cat$ using oplax actions in $\mathcal{M}$. In this section we are interested in its underlying simplicial set $\OplaxAct(\mathcal{M})^{(0)}$ which is obtained by taking the set of objects in each dimension. Particularly, we are interested in monads in $\OplaxAct(\mathcal{M})^{(0)}$ because, as we shall see in Theorem~\ref{teo:MonadsAreSkewMonoidales} below, these are right skew monoidales in $\mathcal{M}$ whose unit has a right adjoint.
\begin{defi}
A \emph{monad of oplax actions} $(R,i\dashv i^*,r,\mu_0,\mu_2)$ in a monoidal bicategory $\mathcal{M}$ is a monad in the simplicial set $\OplaxAct(\mathcal{M})^{(0)}$. More explicitly, it consists of the following items.
\begin{enumerate}
\item One 0-simplex $(R,i\dashv i^*)$
\item One 1-simplex $\xymatrix@1@C=5mm{r:R\ar[r]|-@{|}&R}$
\item Two 2-simplices
\[
\vcenter{\hbox{\xymatrix@R=1mm{
R\ar[rd]_-{i^*\!1}|-@{|}\ar@/^4mm/[rr]^-{r}|-@{|}\xtwocell[rr]{}<>{^\mu_0\ }\ar@{}@<-.5mm>[rr]|{-}&&R\\
&R\ar[ru]_-{i^*\!1}|-@{|}&
}}}
\qquad
\vcenter{\hbox{\xymatrix@R=1mm{
R\ar[rd]_-{r}|-@{|}\ar@/^4mm/[rr]^-{r}|-@{|}\xtwocell[rr]{}<>{^\mu_2\ }\ar@{}@<-.5mm>[rr]|{-}&&R\\
&R\ar[ru]_-{r}|-@{|}&
}}}\]
\item Three 3-simplices
\begin{center}
\begin{tabular}{r@{}c@{}l}
$
\vcenter{\hbox{\xymatrix@!0@C=8mm@R=10mm{
R\ar[rd]_-{r}|-@{|}\ar@{}[rd]_-{\phantom{s_0(R)}}\ar@/^4mm/[rrrd]^-{r}|-@{|}\ar@/^5mm/[rrrr]^-{r}|-@{|}\xtwocell[rrrd]{}<>{^\mu_2\ }\ar@{}@<-.7mm>[rrrd]|*[@]{-}&&{\xtwocell[rr]{}<>{^\mu_2\ }}\ar@{}@<-.5mm>[rr]|{-}&&R\\
&R\ar[rr]_-{r}|-@{|}&&R\ar[ru]_-{r}|-@{|}\ar@{}[ru]_-{\phantom{s_0(R)}}&
}}}
$
&
$
\vcenter{\hbox{\xymatrix@!0{
\ar@3[r]^{\ref{ax:M1}}|-@{|}&
}}}
$
&
$
\vcenter{\hbox{\xymatrix@!0@C=8mm@R=10mm{
R\ar[rd]_-{r}|-@{|}\ar@{}[rd]_-{\phantom{s_0(R)}}\ar@/^5mm/[rrrr]^-{r}|-@{|}\xtwocell[rrr]{}<>{^\mu_2\ }\ar@{}@<-.5mm>[rrr]|{-}&&&&R\\
&R\ar[rr]_-{r}|-@{|}\ar@/^5mm/[rrru]^-{r}|-@{|}\xtwocell[rrru]{}<>{^\mu_2\ }\ar@{}@<-.5mm>[rrru]|*[@]{-}&&R\ar[ru]_-{r}|-@{|}\ar@{}[ru]_-{\phantom{s_0(R)}}&
}}}
$
\\
$
\vcenter{\hbox{\xymatrix@!0@C=8mm@R=10mm{
R\ar[rd]_-{s_0(R)}|-@{|}\ar@/^4mm/[rrrd]^-{r}|-@{|}\ar@/^5mm/[rrrr]^-{r}|-@{|}\xtwocell[rrrd]{}<>{^\mu_0\ }\ar@{}@<-.7mm>[rrrd]|*[@]{-}&&{\xtwocell[rr]{}<>{^\mu_2\ }}\ar@{}@<-.5mm>[rr]|{-}&&R\\
&R\ar[rr]_-{s_0(R)}|-@{|}&&R\ar[ru]_-{r}|-@{|}\ar@{}[ru]_-{\phantom{s_0(R)}}&
}}}
$
&
$
\vcenter{\hbox{\xymatrix@!0{
\ar@3[r]^{\ref{ax:M2}}|-@{|}&
}}}
$
&
$
\vcenter{\hbox{\xymatrix@!0@C=8mm@R=10mm{
R\ar[rd]_-{s_0(R)}|-@{|}\ar@/^5mm/[rrrr]^-{r}|-@{|}\xtwocell[rrr]{}<>{^s_0(r)\quad\ }\ar@{}@<-.5mm>[rrr]|{-}&&&&R\\
&R\ar[rr]_-{s_0(R)}|-@{|}\ar@/^5mm/[rrru]^-{r}|-@{|}\xtwocell[rrru]{}<>{^s_0(r)\quad\ }\ar@{}@<-.5mm>[rrru]|*[@]{-}&&R\ar[ru]_-{r}|-@{|}&
}}}
$
\\
$
\vcenter{\hbox{\xymatrix@!0@C=8mm@R=10mm{
R\ar[rd]_-{r}|-@{|}\ar@{}[rd]_-{\phantom{s_0(R)}}\ar@/^4mm/[rrrd]^-{r}|-@{|}\ar@/^5mm/[rrrr]^-{r}|-@{|}\xtwocell[rrrd]{!<2mm,0mm>}<>{^s_1(r)\quad\ }\ar@{}@<-.7mm>[rrrd]!<2mm,0mm>|*[@]{-}&&{\xtwocell[rr]{}<>{^s_1(r)\quad\ }}\ar@{}@<-.5mm>[rr]|{-}&&R\\
&R\ar[rr]_-{s_0(R)}|-@{|}&&R\ar[ru]_-{s_0(R)}|-@{|}&
}}}
$
&
$
\vcenter{\hbox{\xymatrix@!0{
\ar@3[r]^{\ref{ax:M3}}|-@{|}&
}}}
$
&
$
\vcenter{\hbox{\xymatrix@!0@C=8mm@R=10mm{
R\ar[rd]_-{r}|-@{|}\ar@{}[rd]_-{\phantom{s_0(R)}}\ar@/^5mm/[rrrr]^-{r}|-@{|}\xtwocell[rrr]{}<>{^\mu_2\ }\ar@{}@<-.5mm>[rrr]|{-}&&&&R\\
&R\ar[rr]_-{s_0(R)}|-@{|}\ar@/^5mm/[rrru]^-{r}|-@{|}\xtwocell[rrru]{}<>{^\mu_0\ }\ar@{}@<-.5mm>[rrru]|*[@]{-}&&R\ar[ru]_-{s_0(R)}|-@{|}&
}}}
$
\end{tabular}
\end{center}
\end{enumerate}
If we fully unpack each item and enumerate in the same order, a monad of oplax actions amounts to the items below.
\begin{enumerate}
\item An object $R$, together with the right skew monoidal structure induced by the adjunction $i\dashv i^*$ as in Lemma~\ref{lem:OneRightSkewMonoidale}.
\item An oplax right $R$-action on $R$ with respect to the right skew monoidal structure in (i); that is, an arrow $\xymatrix@1@C=5mm{r:RR\ar[r]&R}$ with structure cells $r^2$ and $r^0$ that satisfy axioms \eqref{ax:OLA1}, \eqref{ax:OLA2}, and \eqref{ax:OLA3}.
\[
\vcenter{\hbox{\xymatrix@!0@=15mm{
RRR\ar[r]^{r1}\ar[d]_{1i^*\!1}\xtwocell[rd]{}<>{^r^2}&RR\ar[d]^{r}\\
RR\ar[r]_{r}&R
}
\qquad\qquad
\xymatrix@!0@=15mm{
RR\ar[d]_-{r}&S\ar[l]_{1i}\ar[dl]^1\xtwocell[ld]{}<>{^<2>\ r^0}\\R
}}}
\]
\item Two quadrangular cells $\mu_2$ and $\mu_0$ each satisfying instances of the three axioms \eqref{ax:2SIM1}, \eqref{ax:2SIM2}, and \eqref{ax:2SIM3}.
\[
\vcenter{\hbox{\xymatrix@!0@=15mm{
RRR\ar[r]^{r1}\ar[d]_{1r}\xtwocell[rd]{}<>{^\mu_2\ }&RR\ar[d]^{r}\\
RR\ar[r]_{r}&R
}\qquad\qquad\xymatrix@!0@=15mm{
RRR\ar[r]^{i^*\!11}\ar[d]_{1i^*\!1}\xtwocell[rd]{}<>{^\mu_0\ }&RR\ar[d]^{r}\\
RR\ar[r]_{i^*\!1}&R
}}}
\]
\item Three instances of the axiom \eqref{ax:3SIM}.
\begin{align}
\tag{M1}\label{ax:M1}
\vcenter{\hbox{\xymatrix@!0@C=15mm{
&RRR\ar[rd]^-{r1}&\\
RRRR\ar[ru]^-{r11}\ar[dd]_-{11r}\ar[rd]_-{1r1}\xtwocell[rddd]{}<>{^1\mu_2\ \ }\xtwocell[rr]{}<>{^\mu_21\quad}&&RR\ar[dd]^-{r}\\
&RRR\ar[dd]^-{1r}\ar[ru]^-{r1}\xtwocell[rd]{}<>{^\mu_2\ }&\\
RRR\ar[rd]_-{1r}&&R\\
&RR\ar[ru]_-{r}&
}}}
\quad&=\quad
\vcenter{\hbox{\xymatrix@!0@C=15mm{
&RRR\ar[rd]^-{r1}\ar[dd]_-{1r}\xtwocell[rddd]{}<>{^\mu_2\ }&\\
RRRR\ar[ru]^-{r11}\ar[dd]_-{11r}&&RR\ar[dd]^-{r}\\
&RR\ar[rd]^-{r}&\\
RRR\ar[rd]_-{1r}\ar[ru]_-{r1}\ar@{}[ruuu]|*[@]{\cong}\xtwocell[rr]{}<>{^\mu_2\ }&&R\\
&RR\ar[ru]_-{r}&
}}}
\\
\tag{M2}\label{ax:M2}
\vcenter{\hbox{\xymatrix@!0@C=15mm{
&RRR\ar[rd]^-{r1}&\\
RRRR\ar[ru]^-{r11}\ar[dd]_-{11i^*\!1}\ar[rd]_-{1i^*\!11}\xtwocell[rddd]{}<>{^1\mu_0\quad}\xtwocell[rr]{}<>{^r^21\quad}&&RR\ar[dd]^-{r}\\
&RRR\ar[dd]^-{1r}\ar[ru]^-{r1}\xtwocell[rd]{}<>{^\mu_2\ }&\\
RRR\ar[rd]_-{1i^*\!1}&&R\\
&RR\ar[ru]_-{r}&
}}}
\quad&=\quad
\vcenter{\hbox{\xymatrix@!0@C=15mm{
&RRR\ar[rd]^-{r1}\ar[dd]_-{1i^*\!1}\xtwocell[rddd]{}<>{^r^2\ }&\\
RRRR\ar[ru]^-{r11}\ar[dd]_-{11i^*\!1}&&RR\ar[dd]^-{r}\\
&RR\ar[rd]^-{r}&\\
RRR\ar[rd]_-{1i^*\!1}\ar[ru]_-{r1}\ar@{}[ruuu]|*[@]{\cong}\xtwocell[rr]{}<>{^r^2\ }&&R\\
&RR\ar[ru]_-{r}&
}}}\\
\tag{M3}\label{ax:M3}
\vcenter{\hbox{\xymatrix@!0@C=15mm{
&RRR\ar[rd]^-{r1}&\\
RRRR\ar[ru]^-{i^*\!111}\ar[dd]_-{11r}\ar[rd]_-{1i^*\!11}\xtwocell[rr]{}<>{^\mu_01\quad}&&RR\ar[dd]^-{r}\\
&RRR\ar[dd]^-{1r}\ar[ru]^-{i^*\!11}&\\
RRR\ar[rd]_-{1i^*\!1}\ar@{}[ru]|*[@]{\cong}&&R\\
&RR\ar[ru]_-{i^*\!1}\ar@{}[ruuu]|*[@]{\cong}&
}}}
\quad&=\quad
\vcenter{\hbox{\xymatrix@!0@C=15mm{
&RRR\ar[rd]^-{r1}\ar[dd]_-{1r}\xtwocell[rddd]{}<>{^\mu_2}&\\
RRRR\ar[ru]^-{i^*\!111}\ar[dd]_-{11r}&&RR\ar[dd]^-{r}\\
&RR\ar[rd]^-{r}&\\
RRR\ar[rd]_-{1i^*\!1}\ar[ru]_-{i^*\!11}\ar@{}[ruuu]|*[@]{\cong}\xtwocell[rr]{}<>{^\mu_0\ }&&R\\
&RR\ar[ru]_-{i^*\!1}&
}}}
\end{align}
\end{enumerate}
This gives a total of twelve axioms for the data $(R,i\dashv i^*,r,r^0,r^2,\mu_0,\mu_2)$.
\end{defi}

As one can see, a monad of oplax actions amounts to a lot of information. Fortunately, some of it is redundant. We shall see below that the cell $r^2$ may be written in terms of the rest of the structure.

\begin{lem}\label{lem:RedundantOplaxActionAssociator}
For every monad of oplax actions $(R,i\dashv i^*,r,\mu_0,\mu_2)$ in a monoidal bicategory $\mathcal{M}$ the following equality holds.
\[
\vcenter{\hbox{\xymatrix@!0@=15mm{
RRR\ar[r]^-{r1}\ar[d]_-{1i^*\!1}\xtwocell[rd]{}<>{^r^2}&RR\ar[d]^-{r}\\
RR\ar[r]_-{r}&R
}}}
\quad=
\vcenter{\hbox{\xymatrix@!0@C=15mm{
RRR\ar@/^8mm/[rr]^-{1}\ar[rd]_-{11i1}\ar[r]^-{1i^*\!1}\ar@/_7mm/[dddr]_-{1}\xtwocell[rddd]{}<>{^11\eta 1\quad}\xtwocell[rr]{}<>{^<-3>1\varepsilon 1\ \ }&RR\ar[r]^-{1i1}&RRR\ar[dd]^-{1r}\ar[r]^-{r1}\xtwocell[rdd]{!<2mm,0mm>}<>{^\mu_2}&**{!<-2mm>}RR\ar@<2mm>[dd]^-{r}&\\
&RRRR\ar[ru]_-{1i^*\!11}\ar[dd]|-{11i^*\!1}\ar@{}[u]|*[@]{\cong}\xtwocell[rd]{}<>{^1\mu_0\ \ }&\\
&&RR\ar[r]_-{r}&**{!<-2mm>}R\\
&RRR\ar[ru]_-{1i^*\!1}&
}}}
\]
\end{lem}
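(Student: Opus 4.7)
My plan is to verify the claimed equality by showing that the pasting on the right-hand side reduces to $r^2$ using only the defining data of a monad of oplax actions. The crux of the argument lies in the 3-simplex axiom \eqref{ax:M2}: among the twelve axioms that we have just unpacked, this is the unique one in which the associator $r^2$ of the underlying oplax action $r$ appears, and hence it is the only axiom capable of expressing $r^2$ redundantly in terms of $\mu_{0}$ and $\mu_{2}$.

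First, I would isolate the sub-pasting on the right-hand side formed by the cells $\mu_{2}$, $1\mu_{0}$, and the coherence isomorphism sitting between them. This sub-pasting has the same shape as the left-hand side of \eqref{ax:M2}, but pre-composed by the arrow $11i1\colon RRR\to RRRR$ rather than read at the full source $RRRR$; this is why the zig-zag cells $11\eta 1$ and $1\varepsilon 1$ appear on the upper-left of the diagram. Applying \eqref{ax:M2} to this block transforms it into a composite containing two copies of $r^2$, linked through invertible associators of $\mathcal M$ and sitting on the upper and lower faces of the 3-simplex respectively.

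Second, I would exploit one of the triangle identities for the adjunction $i\dashv i^*$, namely $(\varepsilon\cdot i)\circ (i\cdot\eta)=1_{i}$, whiskered by suitable identities. Once the substitution from step one has been carried out, the cells $11\eta 1$ and $1\varepsilon 1$ that were left over on the upper-left of the diagram combine with one of the two $r^2$'s to form exactly this zig-zag at the appropriate tensor factor. By the triangle identity, that zig-zag collapses to the identity, so that one copy of $r^{2}$ gets annihilated and the surrounding coherence isomorphisms of the monoidal bicategory cancel pairwise. What remains is a single $r^{2}$, as required.

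The main obstacle is not conceptual but combinatorial: executing these pasting manipulations rigorously in three dimensions, and carefully tracking the interchange isomorphisms and the tensor-associativity isos of $\mathcal M$ through each whiskering. Notably, no additional hypothesis on $\mathcal M$ beyond the existence of the monad-of-oplax-actions data is required for the argument; in particular opmonadic-friendliness plays no role here, in line with the general philosophy of Section~\ref{sec:MonadsOfOplaxActions}.
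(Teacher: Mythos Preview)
Your identification of \eqref{ax:M2} as the pivotal axiom is exactly right, and the overall rhythm (rewrite, apply \eqref{ax:M2}, cancel one copy of $r^{2}$) matches the paper's proof. But two steps are not quite as you describe them.

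First, the sub-pasting $\mu_{2}+1\mu_{0}+\cong$ does \emph{not} coincide with the left-hand side of \eqref{ax:M2} pre-whiskered by $11i1$. The left side of \eqref{ax:M2} contains the cell $r^{2}1$, and whiskering by $11i1$ does not make that cell disappear or become an interchange isomorphism. In the paper's argument the missing $r^{2}1$ is \emph{produced} by first applying axiom \eqref{ax:OLA2} of the underlying oplax action $r$: this converts the $1\varepsilon 1$ region of the right-hand side into a composite involving $r^{0}1$ and $r^{2}1$. Only after that substitution does the pasting contain a genuine instance of the left-hand side of \eqref{ax:M2}, which can then be replaced by the two copies of $r^{2}$ on its right-hand side.

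Second, the cancellation of one copy of $r^{2}$ is not achieved by the bare triangle identity $(\varepsilon i)(i\eta)=1$ of the adjunction $i\dashv i^{*}$. What is actually used is axiom \eqref{ax:OLA3} of the oplax action, which (in schematic form) says that $r^{2}$ pasted with $r^{0}1$ and $1\eta 1$ collapses to the identity. This axiom involves $r^{0}$ and $r^{2}$, not $\varepsilon$; the $1\varepsilon 1$ cell was already traded away for $r^{0}1$ and $r^{2}1$ in the \eqref{ax:OLA2} step. So the ``zig-zag'' that collapses is an oplax-action zig-zag, not an adjunction one.

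In summary: the route is \eqref{ax:OLA2}, then \eqref{ax:M2}, then coherence, then \eqref{ax:OLA3}. Your outline omits the two oplax-action axioms, and without \eqref{ax:OLA2} the application of \eqref{ax:M2} cannot be made to type-check.
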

\begin{proof}

Starting with the right hand side, one uses the following calculation.
\begin{align*}
\vcenter{\hbox{\xymatrix@!0@C=15mm{
&RR\ar@/^9mm/[rrdd]^-{1}\ar@{{}{ }{}}@/^2mm/[rddd]|<<<<<<<{1\varepsilon 1}&&\\
RRR\ar@/^15mm/[rrdd]^-{1}\ar[rd]_-{11i1}\ar[r]^-{1i^*\!1}\ar@/_7mm/[dddr]_-{1}\ar@<1mm>[ru]^-{r1}\xtwocell[rddd]{}<>{^11\eta 1\quad}\xtwocell[rrdd]{!<0mm,2mm>}<>{^<-6>\ \ }&RR\ar[rdd]^-{1i1}&&\\
&RRRR\ar[rd]_-{1i^*\!11}\ar[dd]_-{11i^*\!1}\ar@{}[u]|*[@]{\cong}\xtwocell[rddd]{}<>{^1\mu_0\ \ }&&RR\ar[dd]^-{r}\\
&&RRR\ar[dd]^-{1r}\ar[ru]^-{r1}\xtwocell[rd]{}<>{^\mu_2\ }&\\
&RRR\ar[rd]_-{1i^*\!1}&&R\\
&&RR\ar[ru]_-{r}&
}}}
&\stackrel{\eqref{ax:OLA2}}{=}\!\!\!\!
\vcenter{\hbox{\xymatrix@!0@C=15mm{
&RR\ar[rd]|-{1i1}\ar@/^9mm/[rrdd]^-{1}\xtwocell[rrdd]{}<>{^<-3>r^01\ \ }&&\\
RRR\ar[rd]_-{11i1}\ar@/_7mm/[dddr]_-{1}\ar@<1mm>[ru]^-{r1}\xtwocell[rddd]{}<>{^11\eta 1\quad}&&RRR\ar[rd]^-{r1}&\\
&RRRR\ar[rd]_-{1i^*\!11}\ar[dd]_-{11i^*\!1}\ar[ru]^-{r11}\ar@{}[uu]|*[@]{\cong}\xtwocell[rddd]{}<>{^1\mu_0\ \ }\xtwocell[rr]{}<>{^r^21\ \ }&&RR\ar[dd]^-{r}\\
&&RRR\ar[dd]^-{1r}\ar[ru]^-{r1}\xtwocell[rd]{}<>{^\mu_2\ }&\\
&RRR\ar[rd]_-{1i^*\!1}&&R\\
&&RR\ar[ru]_-{r}&
}}}
\\
\stackrel{\eqref{ax:M2}}{=}
\vcenter{\hbox{\xymatrix@!0@C=15mm{
&RR\ar[rd]|-{1i1}\ar@/^9mm/[rrdd]^-{1}\xtwocell[rrdd]{}<>{^<-3>r^01\ \ }&&\\
RRR\ar[rd]_-{11i1}\ar@/_7mm/[dddr]_-{1}\ar@<1mm>[ru]^-{r1}\xtwocell[rddd]{}<>{^11\eta 1\quad}&&RRR\ar[rd]^-{r1}\ar[dd]_-{1i^*\!1}\xtwocell[rddd]{}<>{^r^2\ }&\\
&RRRR\ar[dd]_-{11i^*\!1}\ar[ru]^-{r11}\ar@{}[uu]|*[@]{\cong}&&RR\ar[dd]^-{r}\\
&&RR\ar[rd]^-{r}&\\
&RRR\ar[rd]_-{1i^*\!1}\ar[ru]^-{r1}\ar@{}[ruuu]|*[@]{\cong}\xtwocell[rr]{}<>{^r^2\ }&&R\\
&&RR\ar[ru]_-{r}&
}}}
&\stackrel{\phantom{\eqref{ax:OLA2}}}{=}\!\!\!\!
\vcenter{\hbox{\xymatrix@!0@C=15mm{
&RR\ar[rd]|-{1i1}\ar@/^9mm/[rrdd]^-{1}\ar@/_7mm/[dddr]_-{1}\xtwocell[rrdd]{}<>{^<-3>r^01\ \ }\xtwocell[rddd]{}<>{^1\eta 1\quad}&&\\
RRR\ar@/_7mm/[dddr]_-{1}\ar@<1mm>[ru]^-{r1}&&RRR\ar[dd]_-{1i^*\!1}\ar[rd]^-{r1}\xtwocell[rddd]{}<>{^r^2\ }&\\
&&&RR\ar[dd]^-{r}\\
&&RR\ar[rd]^-{r}&\\
&RRR\ar[rd]_-{1i^*\!1}\ar[ru]\xtwocell[rr]{}<>{^r^2\ }&&R\\
&&RR\ar[ru]_-{r}&
}}}
\end{align*}
\[
\stackrel{\eqref{ax:OLA3}}{=}
\vcenter{\hbox{\xymatrix@!0@=15mm{
RRR\ar[r]^-{r1}\ar[d]_-{1i^*\!1}\xtwocell[rd]{}<>{^r^2}&RR\ar[d]^-{r}\\
RR\ar[r]_-{r}&R
}}}
\]
\end{proof}

At this point we would like to point out a couple of technical facts involving the calculus of mates.

\begin{rem}\label{rem:AltSkewMonoidale}
A right skew monoidale $(M,i,m,\alpha,\lambda,\rho)$ whose the unit has a right adjoint $i\dashv i^*$ may be expressed in simpler terms using the mate $\kappa$ of the cell $\lambda$ under the adjunction. This involves the data $(M,i,m,\alpha,\kappa,\rho)$ satisfying five axioms; two of which remain unchanged since they do not involve the left unitor cell $\lambda$: the pentagon~\eqref{ax:SKM1} and the $\alpha$-$\rho$ compatibility~\eqref{ax:SKM4}. But the remaining three have modified versions as follows.
\begin{align}
\tag{SKM2'}\label{ax:SKM2'}
\vcenter{\hbox{\xymatrix@!0@C=15mm{
MM\ar[rd]_-{1i1}\ar@/_7mm/[rddd]_-{1}\ar@/^7mm/[rr]^-{1}\xtwocell[rddd]{}<>{^1\eta 1\ }\xtwocell[rr]{}<>{^\rho 1\ }&&MM\ar[dd]^-{m}\\
&MMM\ar@/_3mm/[dd]_-{1i^*\!1}\ar@/^3mm/[dd]^-{1m}\xtwocell[dd]{}<>{^1\kappa}\ar[ru]^-{m1}\xtwocell[rd]{}<>{^\alpha}&\\
&&M\\
&MM\ar[ru]_-{m}&
}}}
\quad&=\!\!\!\!\!
\vcenter{\hbox{\xymatrix@!0@C=15mm{
MM\ar@/_7mm/[rrdd]_-{m}\ar@/^7mm/[rrdd]^-{m}&&\\
&&\\
&&M
}}}
\\
\tag{SKM3'}\label{ax:SKM3'}
\vcenter{\hbox{\xymatrix@!0@=18mm{
RRR\ar@/_3mm/[r]_-{i^*\!11}\ar@/^3mm/[r]^-{m1}\xtwocell[r]{!<2mm,0mm>}<>{^\kappa 1\ }\ar[d]_-{1m}&RR\ar[d]^-{m}\\
RR\ar[r]_-{i^*\!1}\ar@{{}{ }{}}@/_3mm/[r]_-{\phantom{i^*\!1}}\ar@{}[ru]|*=0[@]{\cong}&R
}}}
\quad&=\quad
\vcenter{\hbox{\xymatrix@!0@=18mm{
RRR\ar[r]^-{m1}\ar[d]_-{1m}\ar@{{}{ }{}}@/^3mm/[r]^-{\phantom{m1}}\xtwocell[rd]{}<>{^\alpha}&RR\ar[d]^-{m}\\
RR\ar@/_3mm/[r]_-{i^*\!1}\ar@/^3mm/[r]^-{m}\xtwocell[r]{}<>{^\kappa}&R
}}}
\\
\tag{SKM5'}\label{ax:SKM5'}
\vcenter{\hbox{\xymatrix@!0@=18mm{
RR\ar@/_3mm/[d]_-{i^*\!1}\ar@/^3mm/[d]^-{m}\xtwocell[d]{}<>{^\kappa}&R\ar[l]_-{1i}\ar@/^3mm/[ld]^-{1}\xtwocell[ld]{}<>{^<1>\rho}\\
R
}}}
\quad&=\quad
\vcenter{\hbox{\xymatrix@!0@=9mm{
RR\ar[dd]_-{i^*\!1}\ar@{}[rd]|*=0[@]{\cong}&&R\ar[ll]_-{1i}\ar[ld]_-{i^*}\ar@/^7mm/[lldd]^-{1}\xtwocell[lldd]{}<>{^<-2>\varepsilon}\\
&I\ar[ld]_-{i}&\\
R
}}}
\end{align}
\end{rem}

The following lemma involves a pair of mates $\lambda$ and $\kappa$ as above.

\begin{lem}\label{lem:BijectionLeftUnitVsMonadUnit}
For every object $R$, every arrow $\xymatrix@1@C=5mm{m:RR\ar[r]&R}$, and every adjunction $i\dashv i^*$ in $\mathcal{M}$
\[
\vcenter{\hbox{\xymatrix{
R\xtwocell[d]{}_{i}^{i^*}{'\dashv}\\
I
}}}
\]
there are bijections between cells $\mu_0$ that satisfy the equation \eqref{ax:2SIM1mu0} below, cells $\kappa$, and cells $\lambda$ with sources and targets as shown.
\[
\left\lbrace
\vcenter{\hbox{\xymatrix@!0@=15mm{
RRR\ar[r]^{i^*\!11}\ar[d]_{1i^*\!1}\xtwocell[rd]{}<>{^\mu_0}&RR\ar[d]^{m}\\
RR\ar[r]_{i^*\!1}&R
}}}
\vrule
\text{ \eqref{ax:2SIM1mu0}}
\right\rbrace
\cong
\left\lbrace
\vcenter{\hbox{\xymatrix{
RR\ar@/_3mm/[d]_-{i^*\!1}\ar@/^3mm/[d]^-{m}\xtwocell[d]{}<>{^\kappa}\\
R
}}}
\right\rbrace
\cong
\left\lbrace
\vcenter{\hbox{\xymatrix@!0@=15mm{
R\ar[r]^-{i1}\ar[rd]_-{1}\xtwocell[rd]{}<>{^<-2>\lambda}&RR\ar[d]^-{m}\\
&R
}}}
\right\rbrace
\]
\begin{equation}
\tag{2SIM1$\mu_0$}\label{ax:2SIM1mu0}
\vcenter{\hbox{\xymatrix@!0@C=15mm{
&RRR\ar[rd]^-{i^*\!11}&\\
RRRR\ar[ru]^-{i^*\!111}\ar[dd]_-{11i^*\!1}\ar[rd]_-{1i^*\!11}&&RR\ar[dd]^-{m}\\
&RRR\ar[dd]^-{1i^*\!1}\ar[ru]^-{i^*\!11}\ar@{}[uu]|*[@]{\cong}\xtwocell[rd]{}<>{^\mu_0\ }&\\
RRR\ar[rd]_-{1i^*\!1}\ar@{}[ru]|*[@]{\cong}&&R\\
&RR\ar[ru]_-{i^*\!1}&
}}}
\quad=\quad
\vcenter{\hbox{\xymatrix@!0@C=15mm{
&RRR\ar[rd]^-{i^*\!11}\ar[dd]_-{1i^*\!1}\xtwocell[rddd]{}<>{^\mu_0\ }&\\
RRRR\ar[ru]^-{i^*\!111}\ar[dd]_-{11i^*\!1}&&RR\ar[dd]^-{m}\\
&RR\ar[rd]^-{i^*\!1}&\\
RRR\ar[rd]_-{1i^*\!1}\ar[ru]_-{i^*\!11}\ar@{}[ruuu]|*[@]{\cong}&&R\\
&RR\ar[ru]_-{i^*\!1}\ar@{}[uu]|*[@]{\cong}&
}}}
\end{equation}
\end{lem}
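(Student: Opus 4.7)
The strategy is to establish the two bijections using $\kappa$ as the pivot: the bijection $\kappa\leftrightarrow\lambda$ is the classical mate correspondence, while $\kappa\leftrightarrow\mu_0$ shows that cells $\mu_0$ satisfying \eqref{ax:2SIM1mu0} are precisely the canonical whiskerings of cells $\kappa$ by $i^*11$, up to coherence.

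For $\kappa\leftrightarrow\lambda$, tensor the adjunction $i\dashv i^*$ with the identity on $R$ on the right to obtain $i1\dashv i^*1$, and take mates. Explicitly, given $\lambda$, set $\kappa:=(m\cdot\varepsilon 1)\circ(\lambda\cdot i^*1)$; given $\kappa$, set $\lambda:=(\kappa\cdot i1)\circ(\eta 1)$. The triangle identities of $i\dashv i^*$ give mutual inverseness at once.

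For $\kappa\leftrightarrow\mu_0$, define the forward assignment $\kappa\mapsto\mu_0$ as the pasting of the whiskering $\kappa\cdot i^*11$ with the bifunctoriality isomorphism relating $(i^*1)(i^*11)$ and $(i^*1)(1 i^*1)$. Coherence for the monoidal bicategory shows that this $\mu_0$ satisfies \eqref{ax:2SIM1mu0} automatically, since the axiom asserts exactly compatibility between $\mu_0$ and two instances of bifunctoriality, and this is immediate for a whiskering. Define the backward assignment $\mu_0\mapsto\kappa$ by precomposing $\mu_0$ with $1i1:RR\to RRR$: bifunctoriality identifies $(i^*11)(1i1)\cong(ii^*)1$ and $(1i^*1)(1i1)\cong 1_{RR}$ up to an $\eta$-mediated coherence cell, so that post-composing the resulting cell with $m\cdot\varepsilon 1$ and applying the triangle identities extracts a cell of the correct shape $\kappa:i^*1\Rightarrow m$.

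The main obstacle is checking that these two operations are mutually inverse. The round-trip $\kappa\mapsto\mu_0\mapsto\kappa$ collapses using only the adjunction triangle identities and the naturality of bifunctoriality. The converse round-trip $\mu_0\mapsto\kappa\mapsto\mu_0$ is where axiom \eqref{ax:2SIM1mu0} is essential: it is exactly the compatibility condition that lets one slide the precomposed $1i1$ (together with its attendant $\eta$-cell) through $\mu_0$ in order to recover the original cell, via a pasting diagram chase analogous to that of Lemma~\ref{lem:RedundantOplaxActionAssociator}. The remainder of the proof is careful bookkeeping with the coherence isomorphisms and tracking the direction of each 2-cell at each step.
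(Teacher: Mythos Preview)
Your proposal is correct and follows essentially the same route as the paper. The paper also treats the $\kappa\leftrightarrow\lambda$ bijection as the mate correspondence under $i1\dashv i^*1$, defines $\kappa\mapsto\mu_0$ by whiskering $\kappa$ with $i^*11$ and inserting the interchange isomorphism (so that \eqref{ax:2SIM1mu0} holds by coherence), and defines $\mu_0\mapsto\kappa$ by precomposing with $1i1$ and pasting with $\varepsilon 1$ and $1\eta 1$; the two round-trips are then checked exactly as you outline, with \eqref{ax:2SIM1mu0} being the key input for $\mu_0\mapsto\kappa\mapsto\mu_0$. One small expository slip: the triangle identities are not needed to \emph{define} the backward map, only to verify the round-trip $\kappa\mapsto\mu_0\mapsto\kappa$; the backward map itself is just the pasting with $\varepsilon 1$ and $1\eta 1$.
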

\begin{proof}

The bijection between cells $\lambda$ and cells $\kappa$ is done by the calculus of mates of the adjunction $i1\dashv i^*1$. The interesting part is to prove the bijection between cells $\kappa$ and cells $\mu_0$ satisfying \eqref{ax:2SIM1mu0}. For this purpose, one defines the functions below.
\begin{align*}
\vcenter{\hbox{\xymatrix{
RR\ar@/_3mm/[d]_-{i^*\!1}\ar@/^3mm/[d]^-{m}\xtwocell[d]{}<>{^\kappa}\\
R
}}}
\vcenter{\hbox{\xymatrix@C=15mm{
\ar@{|->}[r]&
}}}
\widetilde{\kappa}:=
&
\vcenter{\hbox{\xymatrix@!0@=15mm{
RRR\ar[r]^-{i^*\!11}\ar[d]_-{1i^*\!1}&RR\ar@/_3mm/[d]_-{i^*\!1}\ar@/^3mm/[d]^-{m}\xtwocell[d]{}<>{^\kappa}\\
RR\ar[r]_-{i^*\!1}\ar@{}[]!<-3mm,-0mm>;[ru]!<-3mm,-0mm>|*=0[@]{\cong}&R\\
}}}
\\
\overline{\mu}_0:=
\vcenter{\hbox{\xymatrix@!0@C=15mm{
RR\ar@/^8mm/[rr]^-{1}\ar@/_7mm/[rddd]_-{1}\ar[rd]|-{1i1}\ar[r]|-{i^*\!1}\xtwocell[rr]{}<>{^<-3>\varepsilon 1}\xtwocell[rddd]{}<>{^1\eta 1\ }&R\ar[r]|-{i1}&RR\ar[dd]^-{m}\\
&RRR\ar[dd]|-{1i^*\!1}\ar[ru]|-{i^*\!11}\xtwocell[rd]{}<>{^\mu_0\ }\ar@{}[u]|*=0[@]{\cong}&\\
&&R\\
&RR\ar[ru]_-{i^*\!1}&
}}}
\vcenter{\hbox{\xymatrix@C=10mm{
&\ar@{|->}[l]
}}}
&
\vcenter{\hbox{\xymatrix@!0@=15mm{
RRR\ar[r]^{i^*\!11}\ar[d]_{1i^*\!1}\xtwocell[rd]{}<>{^\mu_0}&RR\ar[d]^{m}\\
RR\ar[r]_{i^*\!1}&R
}}}
\end{align*}
It is easy to see that given a cell $\kappa$ the cell $\widetilde{\kappa}$ satisfies axiom \eqref{ax:2SIM1mu0} as a consequence of the coherence for the interchange law. Now, the following calculations show that these functions are inverse to each other. First, let $\kappa$ as in the statement and argue as follows,
\[
\overline{\widetilde{\kappa}}=
\vcenter{\hbox{\xymatrix@!0@C=15mm{
RR\ar@/^8mm/[rr]^-{1}\ar@/_7mm/[rddd]_-{1}\ar[rd]|-{1i1}\ar[r]|-{i^*\!1}\xtwocell[rr]{}<>{^<-3>\varepsilon 1}\xtwocell[rddd]{}<>{^1\eta 1\ }&R\ar[r]|-{i1}&RR\ar@/_3mm/[dd]_-{i^*\!1}\ar@/^3mm/[dd]^-{m}\xtwocell[dd]{}<>{^\kappa}\\
&RRR\ar[dd]|-{1i^*\!1}\ar[ru]|-{i^*\!11}\ar@{}[u]|*=0[@]{\cong}&\\
&&R\\
&RR\ar[ru]_-{i^*\!1}\ar@{}[ruuu]|*=0[@]{\cong}&
}}}
=
\vcenter{\hbox{\xymatrix@!0@C=15mm{
RR\ar@/^8mm/[rr]^-{1}\ar@/_7mm/[rddd]_-{1}\ar[r]|-{i^*\!1}\xtwocell[rr]{}<>{^<-3>\varepsilon 1}&R\ar[r]|-{i1}\ar@/_7mm/[rdd]_-{1}\xtwocell[rdd]{}<>{^<2>1\eta 1\ }&RR\ar@/_3mm/[dd]_-{i^*\!1}\ar@/^3mm/[dd]^-{m}\xtwocell[dd]{}<>{^\kappa}\\
&&\\
&&R\\
&RR\ar[ru]_-{i^*\!1}&
}}}
=\kappa
\]
where the first equality holds by the definition of $\overline{\widetilde{\kappa}}$, the second by Gray monoid axioms, and the third by the snake equation of the adjunction. Now let $\mu_0$ be a cell satisfying axiom~\eqref{ax:2SIM1mu0}, then the other composite is the identity.
\[
\widetilde{\overline{\mu}}_0
=\!
\vcenter{\hbox{\xymatrix@!0@C=9mm@R=5.3mm{
&&RR\ar[rddd]|-{1i1}\ar[rrdd]^-{i^*\!1}\ar@/_9mm/[rddddddd]_-{1}\ar@/^14mm/[rrrddddd]^-{1}\xtwocell[rrrddddd]{}<>{^<-6>\varepsilon 1\ }\xtwocell[rddddddd]{}<>{^<2>1\eta 1}&&&\\
&&&&&\\
RRR\ar[rruu]^-{i^*\!11}\ar@/_9mm/[rddddddd]_<(.6){1}&&&&R\ar[rddd]^-{i1}&\\
&&&RRR\ar[rrdd]|-{i^*\!11}\ar[dddd]|-{1i^*\!1}\ar@{}[ru]|*=0[@]{\cong}\xtwocell[rrdddddd]{}<>{^\mu_0\ }&&\\
&&&&&\\
&&&&&RR\ar[dddd]^-{m}\\
&&&&&\\
&&&RR\ar[rrdd]^-{i^*\!1}&&\\
&&&&&\\
&RRR\ar[rrdd]_-{1i^*\!1}\ar[rruu]^-{i^*\!11}&&&&R\\
&&&&&\\
&&&RR\ar[rruu]_-{1i^*\!1}\ar@{}[uuuu]|*=0[@]{\cong}&&
}}}
\!\!\!=\!
\vcenter{\hbox{\xymatrix@!0@C=9mm@R=5.3mm{
&&RR\ar[rddd]|-{1i1}\ar[rrdd]^-{i^*\!1}\ar@/^14mm/[rrrddddd]^-{1}\xtwocell[rrrddddd]{}<>{^<-6>\varepsilon 1\ }&&&\\
&&&&&\\
RRR\ar[rruu]^-{i^*\!11}\ar[rddd]|-{11i1}\ar@/_9mm/[rddddddd]_<(.6){1}\xtwocell[rddddddd]{}<>{^<2>11\eta 1}&&&&R\ar[rddd]^-{i1}&\\
&&&RRR\ar[rrdd]|-{i^*\!11}\ar[dddd]|-{1i^*\!1}\ar@{}[ru]|*=0[@]{\cong}\xtwocell[rrdddddd]{}<>{^\mu_0\ }&&\\
&&&&&\\
&RRRR\ar[dddd]|-{11i^*\!1}\ar[rruu]^-{i^*\!111}\ar@{}[ruuuuu]|*=0[@]{\cong}&&&&RR\ar[dddd]^-{m}\\
&&&&&\\
&&&RR\ar[rrdd]^-{i^*\!1}&&\\
&&&&&\\
&RRR\ar[rrdd]_-{1i^*\!1}\ar[rruu]^-{i^*\!11}\ar@{}[rruuuuuu]|*=0[@]{\cong}&&&&R\\
&&&&&\\
&&&RR\ar[rruu]_-{i^*\!1}\ar@{}[uuuu]|*=0[@]{\cong}&&
}}}
\]
\[
\stackrel{\eqref{ax:2SIM1mu0}}{=}
\vcenter{\hbox{\xymatrix@!0@C=9mm@R=5.3mm{
&&RR\ar[rddd]|-{1i1}\ar[rrdd]^-{i^*\!1}\ar@/^14mm/[rrrddddd]^-{1}\xtwocell[rrrddddd]{}<>{^<-6>\varepsilon 1\ }&&&\\
&&&&&\\
RRR\ar[rruu]^-{i^*\!11}\ar[rddd]|-{11i1}\ar@/_9mm/[rddddddd]_<(.6){1}\xtwocell[rddddddd]{}<>{^<2>11\eta 1}&&&&R\ar[rddd]^-{i1}&\\
&&&RRR\ar[rrdd]|-{i^*\!11}\ar@{}[ru]|*=0[@]{\cong}&&\\
&&&&&\\
&RRRR\ar[dddd]|-{11i^*\!1}\ar[rruu]^-{i^*\!111}\ar[rrdd]|-{1i^*\!11}\ar@{}[ruuuuu]|*=0[@]{\cong}&&&&RR\ar[dddd]^-{m}\\
&&&&&\\
&&&RRR\ar[rruu]^-{i^*\!11}\ar[dddd]^-{1i^*\!1}\ar@{}[uuuu]|*=0[@]{\cong}\xtwocell[rrdd]{}<>{^\mu_0\ }&&\\
&&&&&\\
&RRR\ar[rrdd]_-{1i^*\!1}\ar@{}[rruu]|*=0[@]{\cong}&&&&R\\
&&&&&\\
&&&RR\ar[rruu]_-{i^*\!1}&&
}}}
\]
\[
=\!
\vcenter{\hbox{\xymatrix@!0@C=9mm@R=5.3mm{
&&RR\ar@/^14mm/[rrrddddd]^-{1}&&&\\
&&&&&\\
RRR\ar[rruu]^-{i^*\!11}\ar[rddd]|-{11i1}\ar[rrdd]^-{1i^*\!1}\ar@/^14mm/[rrrddddd]^-{1}\ar@/_9mm/[rddddddd]_<(.6){1}\xtwocell[rddddddd]{}<>{^<2>11\eta 1}\xtwocell[rrrddddd]{}<>{^<-6>1\varepsilon 1\ \ }&&&&&\\
&&&&&\\
&&RR\ar[rddd]^-{1i1}&&&\\
&RRRR\ar[dddd]|-{11i^*\!1}\ar[rrdd]|-{1i^*\!11}\ar@{}[ru]|*=0[@]{\cong}&&&&RR\ar[dddd]^-{m}\\
&&&&&\\
&&&RRR\ar[rruu]^-{i^*\!11}\ar[dddd]^-{1i^*\!1}\xtwocell[rrdd]{}<>{^\mu_0\ }&&\\
&&&&&\\
&RRR\ar[rrdd]_-{1i^*\!1}\ar@{}[rruu]|*=0[@]{\cong}&&&&R\\
&&&&&\\
&&&RR\ar[rruu]_-{i^*\!1}&&
}}}
\!\!\!=\!
\vcenter{\hbox{\xymatrix@!0@C=9mm@R=5.3mm{
&&RR\ar@/^14mm/[rrrddddd]^-{1}&&&\\
&&&&&\\
RRR\ar[rruu]^-{i^*\!11}\ar[rrdd]^-{1i^*\!1}\ar@/^14mm/[rrrddddd]^-{1}\ar@/_9mm/[rddddddd]_<(.6){1}\xtwocell[rrrddddd]{}<>{^<-6>1\varepsilon 1\ \ }&&&&&\\
&&&&&\\
&&RR\ar[rddd]^-{1i1}\ar@/_9mm/[rddddddd]_-{1}\xtwocell[rddddddd]{}<>{^<2>11\eta 1}&&&\\
&&&&&RR\ar[dddd]^-{m}\\
&&&&&\\
&&&RRR\ar[rruu]^-{i^*\!11}\ar[dddd]^-{1i^*\!1}\xtwocell[rrdd]{}<>{^\mu_0\ }&&\\
&&&&&\\
&RRR\ar[rrdd]_-{1i^*\!1}&&&&R\\
&&&&&\\
&&&RR\ar[rruu]_-{i^*\!1}&&
}}}
\!\!\!=
\mu_0
\]
\end{proof}

Note that for a monad of oplax actions $(R,i\dashv i^*,r,\mu_0,\mu_2)$ axiom~\eqref{ax:2SIM1} for $\mu_0$ is precisely the axiom~\eqref{ax:2SIM1mu0} from Lemma~\ref{lem:BijectionLeftUnitVsMonadUnit}. Hence, we get a pair of mates $\lambda$ and $\kappa$ each of which amounts to the same information as the cell $\mu_0$ satisfying \eqref{ax:2SIM1}. This simplifies the formula from Lemma~\ref{lem:RedundantOplaxActionAssociator}.
\begin{equation}\label{eq:associator}
\vcenter{\hbox{\xymatrix@!0@=15mm{
RRR\ar[r]^-{r1}\ar[d]_-{1i^*\!1}\xtwocell[rd]{}<>{^r^2}&RR\ar[d]^-{r}\\
RR\ar[r]_-{r}&R
}}}
\quad:=
\vcenter{\hbox{\xymatrix@!0@=15mm{
RRR\ar@/_3mm/[d]_-{1i^*\!1}\ar@/^3mm/[d]^-{1r}\xtwocell[d]{}<>{^1\kappa}\ar[r]^-{r1}\xtwocell[rd]{!<4mm,-3mm>}<>{^<-1>\mu_2}&RR\ar[d]^-{r}\\
RR\ar[r]_-{r}&R\\
}}}
\end{equation}
With this information we describe a right skew monoidale induced by a monad of oplax actions.

\begin{prop}\label{prop:MonadInducesSkewMonoidale}
For every monad of oplax actions $(R,i\dashv i^*,r,\mu_0,\mu_2)$ in a monoidal bicategory $\mathcal{M}$ there is a right skew monoidale on the object $R$ with structure given as follows,
\begin{center}
\begin{tabular}{rlrl}
Product&$\xymatrix@1@C=5mm{RR\ar[r]^-{r}&R}$&Unit&$\xymatrix@1@C=5mm{I\ar[r]^-{i}&R}$\\
Associator&
$
\vcenter{\hbox{\xymatrix@!0@=15mm{
RRR\ar[r]^-{r1}\ar[d]_-{1r}\xtwocell[rd]{}<>{^\mu_2\ }&RR\ar[d]^-{r}\\
RR\ar[r]_-{r}&R
}}}
$
&
Unitors
&
$
\vcenter{\hbox{\xymatrix@!0@=15mm{
R\ar[r]^-{i1}\ar[rd]_-{1}\xtwocell[rd]{}<>{^<-2>\lambda}&RR\ar[d]|-{r}&R\ar[l]_-{1i}\ar[dl]^-{1}\xtwocell[ld]{}<>{^<2>\ r^0}\\
&R
}}}
$
\end{tabular}
\end{center}
where the left unitor $\lambda$ is obtained from $\mu_0$ as in Lemma~\ref{lem:BijectionLeftUnitVsMonadUnit}.
\end{prop}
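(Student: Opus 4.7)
The plan is to verify the five axioms of a right skew monoidale (Definition~\ref{def:SkewMonoidale}) for the proposed data on $R$. Rather than working with $\lambda$ directly, I will use the alternative presentation of Remark~\ref{rem:AltSkewMonoidale}: replacing $\lambda$ by its mate $\kappa$ under the adjunction $i1\dashv i^*1$ yields the equivalent list of axioms \eqref{ax:SKM1}, \eqref{ax:SKM2'}, \eqref{ax:SKM3'}, \eqref{ax:SKM4}, \eqref{ax:SKM5'}. This reformulation is convenient because $\kappa$ appears directly in the formula~\eqref{eq:associator} for the oplax action associator $r^2$ from Lemma~\ref{lem:RedundantOplaxActionAssociator}, and because Lemma~\ref{lem:BijectionLeftUnitVsMonadUnit} explicitly relates $\mu_0$ to $\kappa$.

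The overall strategy is to set up a dictionary between each reformulated skew monoidale axiom and an appropriate one (or combination) of the twelve axioms defining a monad of oplax actions: the three oplax action axioms \eqref{ax:OLA1}, \eqref{ax:OLA2}, \eqref{ax:OLA3} for $r$; the three axioms \eqref{ax:2SIM1}, \eqref{ax:2SIM2}, \eqref{ax:2SIM3} applied to each of the 2-simplices $\mu_0$ and $\mu_2$; and the three \eqref{ax:3SIM}-instances \eqref{ax:M1}, \eqref{ax:M2}, \eqref{ax:M3}. The key matches I anticipate are: the pentagon \eqref{ax:SKM1} reads off directly from \eqref{ax:M1}, since both sides present the same five-fold pasting of $\mu_2$'s; axiom \eqref{ax:SKM4} follows from the axiom \eqref{ax:2SIM3} applied to the 2-simplex $\mu_2$, for in that case $s=t=v=r$ and so $s^0=v^0=r^0$, specialising \eqref{ax:2SIM3} to the associator-right-unitor compatibility; axiom \eqref{ax:SKM3'} is obtained by substituting the formula \eqref{eq:associator} for $r^2$ into \eqref{ax:M2} and cancelling common pastings; the triangle axiom \eqref{ax:SKM2'} combines \eqref{ax:M3} with the oplax action axiom \eqref{ax:OLA2} and the triangle identities of the adjunction to eliminate the degenerate cells appearing in \eqref{ax:M3}; and finally \eqref{ax:SKM5'} follows from \eqref{ax:2SIM3} applied to $\mu_0$, after noting that the degeneracy 1-simplex has $\cs_0(R)^0=\varepsilon$ by construction and translating between $\mu_0$ and $\kappa$ via Lemma~\ref{lem:BijectionLeftUnitVsMonadUnit}.

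The main obstacle will be the bookkeeping required to establish this dictionary, complicated by the fact that the oplax action associator $r^2$ is not a primitive piece of the monad-of-oplax-actions data but is forced by the formula~\eqref{eq:associator}. Consequently, every occurrence of $r^2$ (in particular in \eqref{ax:M2} and in the oplax action axioms for $r$) has to be first expanded using \eqref{eq:associator} before the comparison with the skew monoidale axiom can be carried out. Once the dictionary has been fixed, each individual verification reduces to a routine pasting-diagram manipulation using Gray monoid coherence, the triangle identities of the adjunction $i\dashv i^*$, and the mate calculus of Lemma~\ref{lem:BijectionLeftUnitVsMonadUnit}.
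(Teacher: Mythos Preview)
Your overall strategy---passing to the $\kappa$-formulation of Remark~\ref{rem:AltSkewMonoidale} and matching each of the five axioms against the twelve monad-of-oplax-actions axioms---is exactly what the paper does, and your identifications for \eqref{ax:SKM1} (from \eqref{ax:M1}), \eqref{ax:SKM4} (from \eqref{ax:2SIM3} for $\mu_2$), and \eqref{ax:SKM5'} (from \eqref{ax:2SIM3} for $\mu_0$) agree with the paper's proof.

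However, your dictionary entries for \eqref{ax:SKM2'} and \eqref{ax:SKM3'} are swapped relative to the paper, and the paper's route is considerably more direct. For \eqref{ax:SKM2'} the paper does not invoke \eqref{ax:M3} or \eqref{ax:OLA2} at all: once you observe that the pasting of $\mu_2$ with $1\kappa$ is, by definition~\eqref{eq:associator}, exactly $r^2$, the left-hand side of \eqref{ax:SKM2'} becomes the left-hand side of the oplax action axiom \eqref{ax:OLA3} for $r$, and the equality is immediate. For \eqref{ax:SKM3'} the paper uses \eqref{ax:M3}, not \eqref{ax:M2}: one expands $\mu_0$ via the formula of Lemma~\ref{lem:BijectionLeftUnitVsMonadUnit} (so that $\kappa$ appears), rewrites using Gray-monoid coherence, applies \eqref{ax:M3} to trade the $\mu_0 1$ pasting for a $\mu_2$--$\mu_0$ pasting, and then collapses back to the $\kappa$-form. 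Your proposed route for \eqref{ax:SKM3'} via \eqref{ax:M2} would require expanding every instance of $r^2$ and $\mu_0$ and then hoping the $\mu_2$'s cancel appropriately; it is not clear this works without further input, and in any case it is not what the paper does.
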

\begin{proof}

Let $\lambda$ and $\kappa$ as in Lemma~\ref{lem:BijectionLeftUnitVsMonadUnit}, then we have the following formulas.
\[
\mu_0=
\vcenter{\hbox{\xymatrix@!0@=15mm{
RRR\ar[r]^-{i^*\!11}\ar[d]_-{1i^*\!1}&RR\ar@/_3mm/[d]_-{i^*\!1}\ar@/^3mm/[d]^-{r}\xtwocell[d]{}<>{^\kappa}\\
RR\ar[r]_-{i^*\!1}\ar@{}[]!<-3mm,-0mm>;[ru]!<-3mm,-0mm>|*=0[@]{\cong}&R\\
}}}
\qquad
\kappa=
\vcenter{\hbox{\xymatrix@!0@C=15mm{
RR\ar@/^8mm/[rr]^-{1}\ar@/_7mm/[rddd]_-{1}\ar[rd]|-{1i1}\ar[r]|-{i^*\!1}\xtwocell[rr]{}<>{^<-3>\varepsilon 1}\xtwocell[rddd]{}<>{^1\eta 1\ }&R\ar[r]|-{i1}&RR\ar[dd]^-{r}\\
&RRR\ar[dd]|-{1i^*\!1}\ar[ru]|-{i^*\!11}\xtwocell[rd]{}<>{^\mu_0\ }\ar@{}[u]|*=0[@]{\cong}&\\
&&R\\
&RR\ar[ru]_-{i^*\!1}&
}}}
\]
Now, axiom~\eqref{ax:SKM1} is literally axiom~\eqref{ax:M1} for the monad of oplax actions and axiom~\eqref{ax:SKM4} is literally axiom~\eqref{ax:2SIM3} for $\mu_2$. Since the proposed unit for the right skew monoidale has a right adjoint, for the rest of the axioms one may prove their alternative versions written in terms of $\kappa$ as in Remark~\ref{rem:AltSkewMonoidale}. The following calculation proves axiom~\eqref{ax:SKM2'}.
\[
\vcenter{\hbox{\xymatrix@!0@C=15mm{
RR\ar[rd]_-{1i1}\ar@/_7mm/[rddd]_-{1}\ar@/^7mm/[rr]^-{1}\xtwocell[rddd]{}<>{^1\eta 1\ }\xtwocell[rr]{}<>{^r^01\ \ }&&RR\ar[dd]^-{r}\\
&RRR\ar@/_3mm/[dd]_-{1i^*\!1}\ar@/^3mm/[dd]^-{1r}\xtwocell[dd]{}<>{^1\kappa}\ar[ru]^-{r1}\xtwocell[rd]{}<>{^\mu_2\ }&\\
&&R\\
&RR\ar[ru]_-{r}&
}}}
\!\!\stackrel{\eqref{eq:associator}}{=}\!\!
\vcenter{\hbox{\xymatrix@!0@C=15mm{
RR\ar[rd]_-{1i1}\ar@/_7mm/[rddd]_-{1}\ar@/^7mm/[rr]^-{1}\xtwocell[rddd]{}<>{^1\eta 1\ }\xtwocell[rr]{}<>{^r^01\ \ }&&RR\ar[dd]^-{r}\\
&RRR\ar[dd]_-{1i^*\!1}\ar[ru]^-{r1}\xtwocell[rd]{}<>{^r^2\ }&\\
&&R\\
&RR\ar[ru]_-{r}&
}}}
\!\stackrel{\eqref{ax:OLA3}}{=}\!\!\!\!\!\!
\vcenter{\hbox{\xymatrix@!0@C=15mm{
RR\ar@/_7mm/[rrdd]_-{r}\ar@/^7mm/[rrdd]^-{r}&&\\
&&\\
&&R
}}}
\]
Axiom \eqref{ax:SKM3'} follows from axiom \eqref{ax:M3} for the monad of oplax actions.
\[
\vcenter{\hbox{\xymatrix@!0@=18mm{
RRR\ar@/_3mm/[r]_-{i^*\!11}\ar@/^3mm/[r]^-{r1}\xtwocell[r]{!<2mm,0mm>}<>{^\kappa 1\ }\ar[d]_-{1r}&RR\ar[d]^-{r}\\
RR\ar[r]_-{i^*\!1}\ar@{{}{ }{}}@/_3mm/[r]_-{\phantom{i^*\!1}}\ar@{}[ru]|*=0[@]{\cong}&R
}}}
=
\vcenter{\hbox{\xymatrix@!0@C=9mm@R=5mm{
&&&&RRR\ar[rrdd]^-{r1}&&\\
&&RR\ar[rru]^-{i11}&&&&\\
RRR\ar[dddd]_-{1r}\ar[rr]_-{1i11}\ar[rru]^-{i^*\!11}\ar@/^9mm/[rrrruu]^-{1}\ar@/_7mm/[rrrrdd]_-{1}\xtwocell[rrrrdd]{}<>{^<1>1\eta 11\quad}\xtwocell[rrrruu]{}<>{^<-3>\varepsilon 11\ \ }&&RRRR\ar[rruu]|-{i^*\!111}\ar[rrdd]_-{1i^*\!11}\xtwocell[rrrr]{}<>{^\mu_01\quad}&&&&RR\ar[dddd]^-{r}\\
&&&&&&\\
&&&&RRR\ar[dddd]^-{1r}\ar[rruu]^-{i^*\!11}&&\\
&&&&&&\\
RR\ar@/_7mm/[rrrrdd]_-{1}&&&&&&R\\
&&&&&&\\
&&&&RR\ar[rruu]_-{i^*\!1}\ar@{}[rruuuuuu]|*[@]{\cong}&&
}}}
\]
\[
=
\vcenter{\hbox{\xymatrix@!0@C=9mm@R=5mm{
&&&&RRR\ar[rrdd]^-{r1}&&\\
&&RR\ar[rru]^-{i11}&&&&\\
RRR\ar[dddd]_-{1r}\ar[rr]_-{1i11}\ar[rru]^-{i^*\!11}\ar@/^9mm/[rrrruu]^-{1}\xtwocell[rrrruu]{}<>{^<-3>\varepsilon 11\ \ }&&RRRR\ar[rruu]|-{i^*\!111}\ar[rrdd]_-{1i^*\!11}\ar[dddd]_-{11r}\xtwocell[rrrr]{}<>{^\mu_01\quad}&&&&RR\ar[dddd]^-{r}\\
&&&&&&\\
&&&&RRR\ar[dddd]^-{1r}\ar[rruu]^-{i^*\!11}&&\\
&&&&&&\\
RR\ar[rr]_-{1i1}\ar@{}[rruuuu]|*=0[@]{\cong}\ar@/_7mm/[rrrrdd]_-{1}\xtwocell[rrrrdd]{}<>{^<1>1\eta 1\ \ }&&RRR\ar[rrdd]_-{1i^*\!1}\ar@{}[rruu]|*[@]{\cong}&&&&R\\
&&&&&&\\
&&&&RR\ar[rruu]_-{i^*\!1}\ar@{}[rruuuuuu]|*[@]{\cong}&&
}}}
\]
\[\stackrel{\eqref{ax:M3}}{=}
\vcenter{\hbox{\xymatrix@!0@C=9mm@R=5mm{
&&&&RRR\ar[rrdd]^-{r1}\ar[dddd]_-{1r}\xtwocell[rrdddddd]{}<>{^\mu_2}&&\\
&&RR\ar[rru]^-{i11}&&&&\\
RRR\ar[dddd]_-{1r}\ar[rr]_-{1i11}\ar[rru]^-{i^*\!11}\ar@/^9mm/[rrrruu]^-{1}\xtwocell[rrrruu]{}<>{^<-3>\varepsilon 11\ \ }&&RRRR\ar[rruu]|-{i^*\!111}\ar[dddd]_-{11r}&&&&RR\ar[dddd]^-{r}\\
&&&&&&\\
&&&&RR\ar[rrdd]^-{r}&&\\
&&&&&&\\
RR\ar[rr]_-{1i1}\ar@{}[rruuuu]|*=0[@]{\cong}\ar@/_7mm/[rrrrdd]_-{1}\xtwocell[rrrrdd]{}<>{^<1>1\eta 1\ \ }&&RRR\ar[rrdd]_-{1i^*\!1}\ar[rruu]_-{i^*\!11}\ar@{}[rruuuuuu]|*[@]{\cong}\xtwocell[rrrr]{}<>{^\mu_0\ }&&&&R\\
&&&&&&\\
&&&&RR\ar[rruu]_-{i^*\!1}&&
}}}
\]
\[
=
\vcenter{\hbox{\xymatrix@!0@C=9mm@R=5mm{
&&&&RRR\ar[rrdd]^-{r1}\ar[dddd]_-{1r}\xtwocell[rrdddddd]{}<>{^\mu_2}&&\\
&&&&&&\\
RRR\ar[dddd]_-{1r}\ar@/^9mm/[rrrruu]^-{1}&&&&&&RR\ar[dddd]^-{r}\\
&&&&&&\\
&&&&RR\ar[rrdd]^-{r}&&\\
&&R\ar[rru]^-{i1}&&&&\\
RR\ar[rr]_-{1i1}\ar[rru]^-{i^*\!1}\ar@/_7mm/[rrrrdd]_-{1}\ar@/^9mm/[rrrruu]^-{1}\xtwocell[rrrruu]{}<>{^<-3>\varepsilon 1\ \ }\xtwocell[rrrrdd]{}<>{^<1>1\eta 1\ \ }&&RRR\ar[rrdd]_-{1i^*\!1}\ar[rruu]_-{i^*\!11}\xtwocell[rrrr]{}<>{^\mu_0\ }&&&&R\\
&&&&&&\\
&&&&RR\ar[rruu]_-{i^*\!1}&&
}}}
=
\vcenter{\hbox{\xymatrix@!0@=18mm{
RRR\ar[r]^-{r1}\ar[d]_-{1r}\ar@{{}{ }{}}@/^3mm/[r]^-{\phantom{r1}}\xtwocell[rd]{}<>{^\mu_2\ }&RR\ar[d]^-{r}\\
RR\ar@/_3mm/[r]_-{i^*\!1}\ar@/^3mm/[r]^-{r}\xtwocell[r]{}<>{^\kappa}&R
}}}
\]
And axiom \eqref{ax:SKM5'} follows from axiom \eqref{ax:2SIM3} for $\mu_0$.
\[
\vcenter{\hbox{\xymatrix@!0@=18mm{
RR\ar@/_3mm/[d]_-{i^*\!1}\ar@/^3mm/[d]^-{r}\xtwocell[d]{}<>{^\kappa}&R\ar[l]_-{1i}\ar@/^3mm/[ld]^-{1}\xtwocell[ld]{}<>{^<1>r^0}\\
R
}}}
=
\vcenter{\hbox{\xymatrix@!0@C=15mm{
R\ar[dd]_-{1i}\ar@/^8mm/[rr]^-{1}&&R\ar[dd]^-{1i}\ar@/^8mm/[dddd]^-{1}\xtwocell[dddd]{}<>{^<-3>r^0}\\
&&\\
RR\ar@/^8mm/[rr]^-{1}\ar@/_7mm/[rddd]_-{1}\ar[rd]|-{1i1}\ar[r]|-{i^*\!1}\xtwocell[rr]{}<>{^<-3>\varepsilon 1}\xtwocell[rddd]{}<>{^1\eta 1\ }&R\ar[r]|-{i1}&RR\ar[dd]^-{r}\\
&RRR\ar[dd]_-{1i^*\!1}\ar[ru]|-{i^*\!11}\ar@{}[u]|*=0[@]{\cong}\xtwocell[rd]{}<>{^\mu_0\ }&\\
&&R\\
&RR\ar[ru]_-{i^*\!1}&
}}}
\]
\[
=
\vcenter{\hbox{\xymatrix@!0@C=15mm{
R\ar[dd]_-{1i}\ar[rd]|-{1i}\ar[r]|-{i^*}\ar@/^8mm/[rr]^-{1}\xtwocell[rr]{}<>{^<-3>\varepsilon}&R\ar[r]|-{i}&R\ar[dd]^-{1i}\ar@/^8mm/[dddd]^-{1}\xtwocell[dddd]{}<>{^<-3>r^0}\\
&RR\ar[dd]_-{11i}\ar[ru]|-{i^*\!1}\ar@{}[u]|*=0[@]{\cong}&\\
RR\ar@/_7mm/[rddd]_-{1}\ar[rd]|-{1i1}\ar@{}[ru]|*=0[@]{\cong}\xtwocell[rddd]{}<>{^1\eta 1\ }&&RR\ar[dd]^-{r}\\
&RRR\ar[dd]_-{1i^*\!1}\ar[ru]|-{i^*\!11}\ar@{}[ruuu]|*=0[@]{\cong}\xtwocell[rd]{}<>{^\mu_0\ }&\\
&&R\\
&RR\ar[ru]_-{i^*\!1}&
}}}
\stackrel{\eqref{ax:2SIM3}}{=}
\vcenter{\hbox{\xymatrix@!0@C=7.5mm@R=4.3mm{
R\ar[dddd]_-{1i}\ar[rrdd]|-{1i}\ar[rr]|-{i^*}\ar@/^8mm/[rrrr]^-{1}\xtwocell[rrrr]{}<>{^<-3>\varepsilon}&&R\ar[rr]|-{i}&&R\ar@/^6mm/[dddddddd]^-{1}\\
&&&&\\
&&RR\ar[dddd]_-{11i}\ar[rruu]|-{i^*\!1}\ar[rddd]|-{1i^*}\ar@/^12mm/[dddddddd]^-{1}\ar@{}[uu]|*=0[@]{\cong}&&\\
&&&&\\
RR\ar@/_7mm/[rrdddddd]_-{1}\ar[rrdd]_-{1i1}\ar@{}[rruu]|*=0[@]{\cong}\xtwocell[rrdddddd]{}<>{^1\eta 1\ }&&&&\\
&&&R\ar[lddddd]|-{1i}&\\
&&RRR\ar[dddd]_-{1i^*\!1}\ar@{}[ruu]|*=0[@]{\cong}\xtwocell[d]{}<>{^<-5>1\varepsilon}&&\\
&&&&\\
&&&&R\\
&&&&\\
&&RR\ar[rruu]_-{i^*\!1}&&
}}}
\]
\[
=
\vcenter{\hbox{\xymatrix@!0@C=7.5mm@R=4.3mm{
R\ar[dddd]_-{1i}\ar[rrdd]|-{1i}\ar[rr]|-{i^*}\ar@/^8mm/[rrrr]^-{1}\ar@/_7mm/[rrrddddd]_-{1}\xtwocell[rrrr]{}<>{^<-3>\varepsilon}\xtwocell[rrrddddd]{}<>{^<1>1\eta\ }&&R\ar[rr]|-{i}&&R\ar@/^6mm/[dddddddd]^-{1}\\
&&&&\\
&&RR\ar[rruu]|-{i^*\!1}\ar[rddd]|-{1i^*}\ar@/^12mm/[dddddddd]^-{1}\ar@{}[uu]|*=0[@]{\cong}&&\\
&&&&\\
RR\ar@/_7mm/[rrdddddd]_-{1}&&&&\\
&&&R\ar[lddddd]|-{1i}&\\
&&{\xtwocell[d]{}<>{^<-5>1\varepsilon}}&&\\
&&&&\\
&&&&R\\
&&&&\\
&&RR\ar[rruu]_-{i^*\!1}&&
}}}
=
\vcenter{\hbox{\xymatrix@!0@=9mm{
RR\ar[dd]_-{i^*\!1}\ar@{}[rd]|*=0[@]{\cong}&&R\ar[ll]_-{1i}\ar[ld]_-{i^*}\ar@/^7mm/[lldd]^-{1}\xtwocell[lldd]{}<>{^<-2>\varepsilon}\\
&I\ar[ld]_-{i}&\\
R
}}}
\]
\end{proof}

\subsection{Skew monoidales whose unit has a right adjoint}

We shall now investigate skew monoidales whose unit has a right adjoint. An example of such right skew monoidales is given in Lemma~\ref{lem:OneRightSkewMonoidale}. It says that adjunctions $i\dashv i^*$ in $\mathcal{M}$
\[
\vcenter{\hbox{\xymatrix{
R\xtwocell[d]{}_{i}^{i^*}{'\dashv}\\
I
}}}
\]
induce right skew monoidal structures on $R$, with product $\xymatrix@1@C=5mm{i^*1:RR\ar[r]&R}$ and unit $i$. We also know that if $\mathcal{M}$ satisfies the hypothesis of Corollary~\ref{cor:MonadsOfOplaxActions} all monads of oplax actions are skew monoidales whose unit has a right adjoint. But, as we mentioned above, this holds true for an arbitrary monoidal bicategory $\mathcal{M}$.

Now, analogous to how a monad in a bicategory has an underlying arrow, we show that a right skew monoidale whose unit has a right adjoint has an ``underlying oplax action''. More precisely, there is a monad of oplax actions that carries the same information as the right skew monoidale, and the ``underlying oplax action'' is the 1-simplex part of such a monad of oplax actions. The existence of such monad of oplax actions is shown in Theorem~\ref{teo:MonadsAreSkewMonoidales} below. First, we construct the ``underlying oplax action'' of a right skew monoidale. To do so, we look at what we know about oplax actions which are the 1-simplex part of a monad of oplax actions; formula~\eqref{eq:associator} tells us that the associator $r^2$ is written in terms of $\mu_2$ and a cell $\kappa$ induced by $\mu_0$. Now, note that the associator $\alpha$ of a right skew monoidale on an object $R$ is of the same type as the cell $\mu_2$ for a monad of oplax actions on a 0-simplex $(R,i\dashv i^*)$. Thus, one may use formula~\eqref{eq:associator} as a guide to define the associator $r^2$ for the ``underlying oplax action'' of a right skew monoidale by replacing the instance of $\mu_2$ for $\alpha$ and using the mate $\kappa$ of $\lambda$.

\begin{prop}\label{prop:SkewMonoidaleInducesOplaxAction}
Every right skew monoidale $(R,i,m,\alpha,\lambda,\rho)$ for which the unit has a right adjoint $i\dashv i^*$ induces an oplax right $R$-action $(R,r)$ with respect to the right skew monoidal structure on $R$ induced by $i\dashv i^*$ as in Lemma~\ref{lem:OneRightSkewMonoidale}. The new oplax action structure is given by $r:=m$, $r^0:=\rho$, and
\[
\vcenter{\hbox{\xymatrix@!0@=15mm{
RRR\ar[r]^-{r1}\ar[d]_-{1i^*\!1}\xtwocell[rd]{}<>{^r^2}&RR\ar[d]^-{r}\\
RR\ar[r]_-{r}&R
}}}
\quad:=\ 
\vcenter{\hbox{\xymatrix@!0@=15mm{
RRR\ar@/_3mm/[d]_-{1i^*\!1}\ar@/^3mm/[d]^-{1r}\xtwocell[d]{}<>{^1\kappa}\ar[r]^-{r1}\xtwocell[rd]{!<4mm,-3mm>}<>{^<-1>\alpha}&RR\ar[d]^-{r}\\
RR\ar[r]_-{r}&R\\
}}}
\]
where $\kappa$ is the mate of $\lambda$ under the adjunction $i\dashv i^*$.
\end{prop}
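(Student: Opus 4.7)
The plan is to verify directly the three oplax action axioms \eqref{ax:OLA1}, \eqref{ax:OLA2}, \eqref{ax:OLA3} for the data $(R, r:=m, r^0:=\rho, r^2)$ with $r^2$ given by the vertical pasting of $1\kappa$ with $\alpha$ as in the statement. Throughout I would use the equivalent presentation of the skew monoidale in terms of $\kappa$ rather than $\lambda$ recorded in Remark~\ref{rem:AltSkewMonoidale}, so the five axioms available are \eqref{ax:SKM1}, \eqref{ax:SKM2'}, \eqref{ax:SKM3'}, \eqref{ax:SKM4}, and \eqref{ax:SKM5'}. Recall also that the right skew structure on $R$ which is being acted upon has product $i^*1$, unit $i$, isomorphism associator, and unitors $\eta 1$ and $\varepsilon$ as in Lemma~\ref{lem:OneRightSkewMonoidale}, so the axioms on the action are phrased against this structure.

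I would handle \eqref{ax:OLA2} and \eqref{ax:OLA3} first, as they are essentially immediate after substituting the definition of $r^2$. For \eqref{ax:OLA2}, which concerns the interaction of $r^2$ with the (iso) associator of the regular action $i^*1$, the problem reduces to the statement that $\alpha$ pasted with $1\kappa$ agrees with the mate computation when a factor $i^*1$ is in the leftmost slot; this is exactly \eqref{ax:SKM3'} with an extra $R$ whiskered on the right, together with pseudonaturality of interchange. For \eqref{ax:OLA3}, where the unitor $r^0=\rho$ and the counit $\varepsilon$ of the regular action are involved, substituting $r^2$ turns the equation into a pasting of $\alpha$, $\kappa$, $\rho$ and $\varepsilon$ whose two sides match after applying \eqref{ax:SKM4} (the $\alpha$-$\rho$ compatibility) followed by \eqref{ax:SKM5'} (which expresses $\kappa \cdot (1i)$ as $\rho$ combined with a snake-like pasting for $i \dashv i^*$); the triangle equation for $i\dashv i^*$ finishes the job.

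The main obstacle is axiom \eqref{ax:OLA1}, the pentagon-style associativity of the action. After substituting the definition of $r^2$ on both sides, each side becomes a pasting in $\mathcal{M}$ containing two copies of $\alpha$ and two copies of $\kappa$, glued by identities, by $r$-whiskerings and by one instance of the (invertible) associator of the regular structure on $R$. The strategy is to first apply the pentagon \eqref{ax:SKM1} of the skew monoidale to reconcile the two copies of $\alpha$, bringing both sides into a shape where they differ only in the relative position of the two $\kappa$ cells and an instance of interchange; then \eqref{ax:SKM3'}, whiskered on the left by $R$, allows one to slide one $\kappa$ through the remaining $\alpha$ so as to match the second copy of $\kappa$ on the other side. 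What makes this step tricky is keeping the pasting readable: middle-four interchange has to be invoked several times to move $\kappa$ past identities and $r$-whiskerings, and one must use that the associator for the regular right skew monoidal structure on $R$ is an isomorphism in order to cancel the remaining coherence cells. No additional structure on $\mathcal{M}$ is needed.
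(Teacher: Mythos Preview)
Your overall strategy is right and matches the paper's: pass to the $\kappa$-presentation of the skew monoidale (Remark~\ref{rem:AltSkewMonoidale}) and verify \eqref{ax:OLA1}--\eqref{ax:OLA3} directly. Your plan for \eqref{ax:OLA1} is also correct; the paper uses \eqref{ax:SKM3'} and then the pentagon \eqref{ax:SKM1} (you propose the opposite order, which is harmless).

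The problem is with the two unit axioms: you have them swapped, and in the process one of them goes missing. What you describe under ``\eqref{ax:OLA3}''---the axiom involving $r^0=\rho$ together with the counit $\varepsilon$ of $i\dashv i^*$ (i.e.\ the \emph{right} unitor of the regular skew structure on $R$)---is in fact \eqref{ax:OLA2}, and your sketch for it (use \eqref{ax:SKM5'} then \eqref{ax:SKM4}) is exactly what the paper does. Conversely, your description of ``\eqref{ax:OLA2}'' as an interaction of $r^2$ with an iso associator does not correspond to any of the three oplax-action axioms; there is no second associator-type condition beyond \eqref{ax:OLA1}, and \eqref{ax:SKM3'} alone will not discharge anything here.

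The genuine gap is \eqref{ax:OLA3}, the compatibility of $r^2$ and $r^0$ with the \emph{left} unitor $\eta 1$ of the regular structure. After substituting $r^2=\alpha\cdot(1\kappa)$ and $r^0=\rho$, this axiom is \emph{literally} \eqref{ax:SKM2'}: no further manipulation is needed. You list \eqref{ax:SKM2'} among the available axioms but never invoke it; that is precisely the missing step.
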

\begin{proof}

Using the alternative versions of the axioms for the skew monoidale $(R,i,m,\alpha,\kappa,\rho)$ as described in Remark~\ref{rem:AltSkewMonoidale}, axiom~$\eqref{ax:OLA1}$ for $(R,r)$ follows by using axiom~\eqref{ax:SKM3'}.
\[
\vcenter{\hbox{\xymatrix@!0@C=19mm@R=10.5mm{
&RRR\ar[rd]^-{r1}&\\
RRRR\ar[ru]^-{r11}\ar[dd]_-{11i^*\!1}\ar@/_3mm/[rd]_>>>>{1i^*\!11}\ar@/^3mm/[rd]^>>>>>{1r1}\xtwocell[rd]{}<>{^1\kappa 1\ \ }\xtwocell[rr]{!<2mm,0mm>}<>{^\alpha 1\ }&&RR\ar[dd]^-{r}\\
&RRR\ar@/_3mm/[dd]_-{1i^*\!1}\ar@/^3mm/[dd]^-{1r}\xtwocell[dd]{}<>{^1\kappa}\ar[ru]^-{r1}\xtwocell[rd]{}<>{^\alpha}&\\
RRR\ar[rd]_-{1i^*\!1}\ar@{}[ru]|*=0[@]{\cong}&&R\\
&RR\ar[ru]_-{r}&
}}}
\]
\begin{align*}
\stackrel{\phantom{\eqref{ax:SKM1}}}{=}\!\!\!\!
\vcenter{\hbox{\xymatrix@!0@C=19mm@R=10.5mm{
&RRR\ar[rd]^-{r1}&\\
RRRR\ar[ru]^-{r11}\ar@/_3mm/[dd]_-{11i^*\!1}\ar@/^3mm/[dd]^>>>>>{11r}\xtwocell[dd]{}<>{^11\kappa}\ar@/_3mm/[rd]_>>>{1i^*\!11}\ar@/^3mm/[rd]^>>>>>{1r1}\xtwocell[rd]{}<>{^1\kappa 1\ \ }\xtwocell[rr]{!<2mm,0mm>}<>{^\alpha 1\ }&&RR\ar[dd]^-{r}\\
&RRR\ar[dd]^-{1r}\ar[ru]^-{r1}\xtwocell[rd]{}<>{^\alpha}&\\
RRR\ar[rd]_-{1i^*\!1}\ar@{}[ru]|*=0[@]{\cong}&&R\\
&RR\ar[ru]_-{r}&
}}}
&\!\!\stackrel{\eqref{ax:SKM3}}{=}\!\!\!\!
\vcenter{\hbox{\xymatrix@!0@C=19mm@R=10.5mm{
&RRR\ar[rd]^-{r1}&\\
RRRR\ar[ru]^-{r11}\ar@/_3mm/[dd]_-{11i^*\!1}\ar@/^3mm/[dd]^-{11r}\ar[rd]^-{1r1}\xtwocell[dd]{}<>{^11\kappa}\xtwocell[rddd]{}<>{^1\alpha\ }\xtwocell[rr]{}<>{^\alpha 1\ }&&RR\ar[dd]^-{r}\\
&RRR\ar[ru]^-{r1}\ar[dd]^-{1r}\xtwocell[rd]{}<>{^\alpha}&\\
RRR\ar@/_3mm/[rd]_-{1i^*\!1}\ar@/^3mm/[rd]^-{1r}\xtwocell[rd]{}<>{^1\kappa\ }&&R\\
&RR\ar[ru]_-{r}&
}}}
\\
\stackrel{\eqref{ax:SKM1}}{=}\!\!\!\!
\vcenter{\hbox{\xymatrix@!0@C=19mm@R=10.5mm{
&RRR\ar[rd]^-{r1}\ar[dd]^-{1r}\xtwocell[rddd]{}<>{^\alpha}&\\
RRRR\ar[ru]^-{r11}\ar@/_3mm/[dd]_-{11i^*\!1}\ar@/^3mm/[dd]^-{11r}\xtwocell[dd]{}<>{^11\kappa}&&RR\ar[dd]^-{r}\\
&RR\ar[rd]^-{r}&\\
RRR\ar@/_3mm/[rd]_-{1i^*\!1}\ar@/^3mm/[rd]^-{1r}\ar@{}[ruuu]|*=0[@]{\cong}\xtwocell[rd]{}<>{^1\kappa\ }\ar[ru]_-{r1}\xtwocell[rr]{}<>{^\alpha}&&R\\
&RR\ar[ru]_-{r}&
}}}
&\!\!\stackrel{\phantom{\eqref{ax:SKM3}}}{=}\!\!\!\!
\vcenter{\hbox{\xymatrix@!0@C=19mm@R=10.5mm{
&RRR\ar[rd]^-{r1}\ar@/_3mm/[dd]_-{1i^*\!1}\ar@/^3mm/[dd]^-{1r}\xtwocell[dd]{}<>{^1\kappa}\xtwocell[rddd]{}<>{^\alpha}&\\
RRRR\ar[ru]^-{r11}\ar[dd]_-{11i^*\!1}\ar@{{}{ }{}}@/_3mm/[dd]_-{\phantom{11i^*\!1}}&&RR\ar[dd]^-{r}\\
&RR\ar[rd]^-{r}&\\
RRR\ar@/_3mm/[rd]_-{1i^*\!1}\ar@/^3mm/[rd]^-{1r}\ar@{}[ruuu]|*=0[@]{\cong}\xtwocell[rd]{}<>{^1\kappa\ }\ar[ru]_-{r1}\xtwocell[rr]{}<>{^\alpha}&&R\\
&RR\ar[ru]_-{r}&
}}}
\end{align*}
To prove axiom~\eqref{ax:OLA2} one makes use of axiom \eqref{ax:SKM5'}.
\[
\vcenter{\hbox{\xymatrix@!0@C=7.5mm@R=4.3mm{
&&R\ar@/^7mm/[rrdddddd]^-{1}&&\\
&&&&\\
RR\ar[rruu]^-{r}\ar[dddd]_-{11i}\ar[rddd]|-{1i^*}\ar@/^7mm/[rrdddddd]^-{1}\xtwocell[rrdddddd]{}<>{^<-2.5>1\varepsilon\ }&&&&\\
&&&&\\
&&&&\\
&R\ar[rddd]|-{1i}&&&\\
RRR\ar[rrdd]_-{1i^*\!1}\ar@{}[ruu]|*=0[@]{\cong}&&&&R\\
&&&&\\
&&RR\ar[rruu]_-{r}&&
}}}
\stackrel{\eqref{ax:SKM5'}}{=}
\vcenter{\hbox{\xymatrix@!0@C=15mm{
&R\ar@/^7mm/[rddd]^-{1}&\\
RR\ar[ru]^-{r}\ar[dd]_-{11i}\ar@/^7mm/[rddd]^-{1}\xtwocell[rddd]{}<>{^1\rho\ }&&\\
&&\\
RRR\ar@/_3mm/[rd]_-{1i^*\!1}\ar@/^3mm/[rd]^<<<{1r}\xtwocell[rd]{}<>{^1\kappa\ }&&R\\
&RR\ar[ru]_-{r}&
}}}
\]\[
\stackrel{\eqref{ax:SKM4}}{=}
\vcenter{\hbox{\xymatrix@!0@C=15mm{
&R\ar@/^7mm/[rddd]^-{1}\ar[dd]_-{1i}\xtwocell[rddd]{}<>{^\rho}&\\
RR\ar[ru]^-{r}\ar[dd]_-{11i}&&\\
&RR\ar[rd]^-{r}&\\
RRR\ar@/_3mm/[rd]_-{1i^*\!1}\ar@/^3mm/[rd]^<<<{1r}\xtwocell[rd]{}<>{^1\kappa\ }\ar[ru]^-{r1}\ar@{}[ruuu]|*=0[@]{\cong}\xtwocell[rr]{}<>{^\alpha}&&R\\
&RR\ar[ru]_-{r}&
}}}
\]
And axiom~\eqref{ax:OLA3} is precisely \eqref{ax:SKM2'}.
\end{proof}

It is worth pointing out that in the presence of a right skew monoidale on an object $R$ whose unit has a right adjoint $i\dashv i^*$ there are automatically three oplax actions defined on $R$.
\begin{itemize}
\item The regular oplax right action.
\item The oplax action induced by the adjunction $i\dashv i^*$.
\item The ``underlying oplax action'' from the previous proposition.
\end{itemize}

Now we are ready to prove the one of the main theorems of this paper.

\begin{teo}\label{teo:MonadsAreSkewMonoidales}
In a monoidal bicategory $\mathcal{M}$, monads of oplax actions are in bijection with right skew monoidales whose unit has a right adjoint.
\end{teo}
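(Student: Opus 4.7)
The plan is to exhibit mutually inverse constructions between the two classes and show that all the required axioms correspond under the identifications given in Propositions~\ref{prop:MonadInducesSkewMonoidale} and \ref{prop:SkewMonoidaleInducesOplaxAction}. One direction is essentially already installed: given a monad of oplax actions $(R,i\dashv i^*,r,\mu_0,\mu_2)$, Proposition~\ref{prop:MonadInducesSkewMonoidale} produces a right skew monoidale $(R,i,r,\mu_2,\lambda,r^0)$ on $R$ whose unit has the right adjoint $i^*$, with $\lambda$ obtained from $\mu_0$ via Lemma~\ref{lem:BijectionLeftUnitVsMonadUnit}. Conversely, starting from a right skew monoidale $(R,i,m,\alpha,\lambda,\rho)$ with $i\dashv i^*$, Proposition~\ref{prop:SkewMonoidaleInducesOplaxAction} gives the underlying oplax action $(R,r,r^2,r^0)$ with $r:=m$ and $r^0:=\rho$. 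I would then complete this to a monad of oplax actions by declaring $\mu_2:=\alpha$ and defining $\mu_0$ from the mate $\kappa$ of $\lambda$ using the inverse bijection $\kappa\mapsto\widetilde{\kappa}$ of Lemma~\ref{lem:BijectionLeftUnitVsMonadUnit}. These choices are forced on us: the equality~\eqref{eq:associator} and Lemma~\ref{lem:RedundantOplaxActionAssociator} show that in any monad of oplax actions $r^2$ is determined by $(\mu_2,\kappa)$, while Lemma~\ref{lem:BijectionLeftUnitVsMonadUnit} identifies $\mu_0$ with $\kappa$ (equivalently $\lambda$), so no other definition is possible.

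The bulk of the work is the verification that these two assignments really are mutually inverse and that the proposed $(R,i\dashv i^*,r,\mu_0,\mu_2)$ built from a skew monoidale satisfies all twelve monad-of-oplax-actions axioms. The twelve axioms regroup naturally: the three oplax-action axioms \eqref{ax:OLA1}--\eqref{ax:OLA3} for $(r,r^2,r^0)$ are exactly the content of Proposition~\ref{prop:SkewMonoidaleInducesOplaxAction}. The three 2-simplex axioms for $\mu_2=\alpha$ (the pentagon side, the $r^2$-compatibility, and the $r^0$-compatibility) will then need to be matched against the remaining skew monoidale axioms \eqref{ax:SKM1}, \eqref{ax:SKM3'}, \eqref{ax:SKM4}; note that \eqref{ax:SKM3'} appears already rewritten via~\eqref{eq:associator} in terms of $\kappa$, which is precisely the shape needed for the 2-simplex axioms. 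The three 2-simplex axioms for $\mu_0$ reduce, via Lemma~\ref{lem:BijectionLeftUnitVsMonadUnit}, to equations about $\kappa$; one of them, \eqref{ax:2SIM1mu0}, is automatic by the coherence of interchange, and the remaining two are \eqref{ax:SKM2'} and \eqref{ax:SKM5'}.

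The three 3-simplex axioms \eqref{ax:M1}, \eqref{ax:M2}, \eqref{ax:M3} are where the main obstacle lies. Axiom~\eqref{ax:M1} is immediate from the pentagon \eqref{ax:SKM1} once one unpacks $\mu_2=\alpha$. The tricky ones are \eqref{ax:M2} and \eqref{ax:M3}, because they both mix $\mu_2$ with $\mu_0$ and with the structural constraint $r^2$, and in the skew-monoidale translation $r^2$ itself is not a primary datum but the composite given by~\eqref{eq:associator}. I expect to prove \eqref{ax:M2} by substituting the defining formula of $r^2$ on both sides and collapsing instances of $\kappa$ using the counit of $i\dashv i^*$ together with \eqref{ax:SKM3'}; and to prove \eqref{ax:M3} by translating the left-hand side via Lemma~\ref{lem:BijectionLeftUnitVsMonadUnit} into an equation involving $\kappa 1$ and $1\kappa$, which then follows from the pentagon and \eqref{ax:SKM3'}. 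These string-diagram manipulations are very much in the style of the calculations already carried out in the proof of Proposition~\ref{prop:MonadInducesSkewMonoidale}, so they should go through with care.

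Finally, to show the constructions are mutually inverse, I would observe that on underlying data both round trips are the identity: $(r,\mu_2,r^0)$ and $(m,\alpha,\rho)$ are literally identified, while the passage $\mu_0\leftrightarrow\kappa\leftrightarrow\lambda$ is an instance of the bijection established in Lemma~\ref{lem:BijectionLeftUnitVsMonadUnit}, and the derived cell $r^2$ is the unique cell forced by~\eqref{eq:associator}. Thus the two procedures invert each other object-for-object, and the theorem follows. The main obstacle throughout will be the diagrammatic bookkeeping in the verification of \eqref{ax:M2} and \eqref{ax:M3}; everything else is essentially assembly of the lemmas and propositions already proved.
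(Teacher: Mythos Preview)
Your overall strategy is exactly the paper's: use Proposition~\ref{prop:MonadInducesSkewMonoidale} in one direction, and in the other direction build the monad from Proposition~\ref{prop:SkewMonoidaleInducesOplaxAction} together with $\mu_2:=\alpha$ and $\mu_0:=\widetilde{\kappa}$ via Lemma~\ref{lem:BijectionLeftUnitVsMonadUnit}, then check the twelve axioms and the round trips. The round-trip argument you sketch is also the paper's.

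However, your bookkeeping of which skew-monoidale axiom handles which monad axiom is off in a few places, and this matters because it is precisely the content of the verification. For the 2-simplex axioms of $\mu_2$, the paper uses only \eqref{ax:SKM1} (twice, for \eqref{ax:2SIM1} and \eqref{ax:2SIM2}, after expanding $r^2$ via~\eqref{eq:associator} and sliding $\kappa$ by interchange) and \eqref{ax:SKM4} (literally, for \eqref{ax:2SIM3}); axiom \eqref{ax:SKM3'} is not needed here. For the 2-simplex axioms of $\mu_0$, you correctly note that \eqref{ax:2SIM1} is automatic, but the remaining two are \eqref{ax:SKM3'} and \eqref{ax:SKM5'}, not \eqref{ax:SKM2'} and \eqref{ax:SKM5'}: axiom \eqref{ax:2SIM2} for $\mu_0$ unwinds to the $\kappa$-naturality statement that is exactly \eqref{ax:SKM3'}. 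Finally, you overestimate the difficulty of \eqref{ax:M2}: once you have Proposition~\ref{prop:SkewMonoidaleInducesOplaxAction} in hand, \eqref{ax:M2} is \emph{identical} to \eqref{ax:OLA1} for the underlying oplax action (just compare the two displayed equations), so no new computation is required. Only \eqref{ax:M3} needs a genuine calculation, and it follows from a single application of \eqref{ax:SKM3'} after expanding $\mu_0$ in terms of $\kappa$; the pentagon is not used there.
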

\begin{proof}

In Proposition~\ref{prop:MonadInducesSkewMonoidale} we built a right skew monoidale out of a monad of oplax actions. Conversely, for a right skew monoidale $(R,i,m,\alpha,\lambda,\rho)$ whose unit has a right adjoint $i^*$, and where $\kappa$ is the mate of $\lambda$ under the adjunction; the following items (numbered as in the definition) constitute a monad of oplax actions.
\begin{enumerate}
\item Take the underlying object $R$ and the unit $i$ with its right adjoint $(R,i\dashv i^*)$.
\item Take the ``underlying oplax action'' $(r,r^0,r^2)$ of $(R,i,m,\alpha,\lambda,\rho)$ as constructed in Proposition~\ref{prop:SkewMonoidaleInducesOplaxAction}. It is given by $r:=m$, $r^0:=\rho$, and associator $r^2$ as shown.
\[
\vcenter{\hbox{\xymatrix@!0@=15mm{
RRR\ar[r]^-{r1}\ar[d]_-{1i^*\!1}\xtwocell[rd]{}<>{^r^2}&RR\ar[d]^-{r}\\
RR\ar[r]_-{r}&R
}}}
\quad:=\ 
\vcenter{\hbox{\xymatrix@!0@=15mm{
RRR\ar@/_3mm/[d]_-{1i^*\!1}\ar@/^3mm/[d]^-{1r}\xtwocell[d]{}<>{^1\kappa}\ar[r]^-{r1}\xtwocell[rd]{!<4mm,-3mm>}<>{^<-1>\alpha}&RR\ar[d]^-{r}\\
RR\ar[r]_-{r}&R\\
}}}
\]
\item Take $\mu_2$ to be the associator cell $\alpha$ and $\mu_0$ the cell corresponding to the left unitor as in Lemma~\ref{lem:BijectionLeftUnitVsMonadUnit}.
\[
\mu_2:=\alpha
\quad\qquad
\mu_0:=
\vcenter{\hbox{\xymatrix@!0@=15mm{
RRR\ar[r]^-{i^*\!11}\ar[d]_-{1i^*\!1}&RR\ar@/_3mm/[d]_-{i^*\!1}\ar@/^3mm/[d]^-{r}\xtwocell[d]{}<>{^\kappa}\\
RR\ar[r]_-{i^*\!1}\ar@{}[]!<-3mm,-0mm>;[ru]!<-3mm,-0mm>|*=0[@]{\cong}&R\\
}}}
\]
\end{enumerate}
Now, one needs to prove that the cells $\mu_0$ and $\mu_2$ satisfy axioms~\eqref{ax:2SIM1}, \eqref{ax:2SIM2}, and \eqref{ax:2SIM3}. The calculation below verifies axiom~\eqref{ax:2SIM1} for $\mu_2$,
\[
\vcenter{\hbox{\xymatrix@!0@C=15mm{
&RRR\ar[rd]^-{r1}&\\
RRRR\ar[ru]^-{r11}\ar[dd]_-{11r}\ar@/_3mm/[rd]_>>>{1i^*\!11}\ar@/^3mm/[rd]^<<{1r1}\xtwocell[rd]{!<3mm,-2mm>}<>{^1\kappa 1\ \ }\xtwocell[rr]{!<3mm,0mm>}<>{^\alpha 1\ }&&RR\ar[dd]^-{r}\\
&RRR\ar[dd]^-{1r}\ar[ru]^-{r1}\xtwocell[rd]{}<>{^\alpha\ }&\\
RRR\ar[rd]_-{1i^*\!1}\ar@{}[ru]|*=0[@]{\cong}&&R\\
&RR\ar[ru]_-{r}&
}}}
\stackrel{\eqref{ax:OLA2}}{=}
\vcenter{\hbox{\xymatrix@!0@C=15mm{
&RRR\ar[rd]^-{r1}&\\
RRRR\ar[ru]^-{r11}\ar[dd]_-{11r}\ar[rd]_-{1r1}\xtwocell[rddd]{}<>{^1\alpha\ \ }\xtwocell[rr]{}<>{^\alpha 1\ }&&RR\ar[dd]^-{r}\\
&RRR\ar[dd]^-{1r}\ar[ru]^-{r1}\xtwocell[rd]{}<>{^\alpha\ }&\\
RRR\ar@/_3mm/[rd]_-{1i^*\!1}\ar@/^3mm/[rd]^-{1r}\xtwocell[rd]{!<2mm,-1mm>}<>{^1\kappa\ }&&R\\
&RR\ar[ru]_-{r}&
}}}
\]\[
\stackrel{\eqref{ax:SKM1}}{=}
\vcenter{\hbox{\xymatrix@!0@C=15mm{
&RRR\ar[rd]^-{r1}\ar[dd]_-{1r}\xtwocell[rddd]{}<>{^\alpha\ }&\\
RRRR\ar[ru]^-{r11}\ar[dd]_-{11r}&&RR\ar[dd]^-{r}\\
&RR\ar[rd]^-{r}&\\
RRR\ar@/_3mm/[rd]_-{1i^*\!1}\ar@/^3mm/[rd]^<<<{1r}\xtwocell[rd]{!<2mm,-1mm>}<>{^1\kappa\ }\ar[ru]^-{r1}\ar@{}[ruuu]|*[@]{\cong}\xtwocell[rr]{!<3mm,0mm>}<>{^\alpha}&&R\\
&RR\ar[ru]_-{r}&
}}}
\]
axiom~\eqref{ax:2SIM2} for $\mu_2$ is a consequence of the following calculation,
\[
\vcenter{\hbox{\xymatrix@!0@C=15mm{
&RRR\ar[rd]^-{r1}&\\
RRRR\ar[ru]^-{r11}\ar[rd]|-{1r1}\ar@/_3mm/[dd]_-{11i^*\!1}\ar@/^3mm/[dd]^-{11r}\xtwocell[dd]{}<>{^11\kappa}\xtwocell[rddd]{!<2mm,-1mm>}<>{^1\alpha\ \ }\xtwocell[rr]{}<>{^\alpha 1\ }&&RR\ar[dd]^-{r}\\
&RRR\ar[dd]^-{1r}\ar[ru]^-{r1}\xtwocell[rd]{}<>{^\alpha}&\\
RRR\ar[rd]_-{1r}&&R\\
&RR\ar[ru]_-{r}&
}}}
\stackrel{\eqref{ax:SKM1}}{=}
\vcenter{\hbox{\xymatrix@!0@C=15mm{
&RRR\ar[rd]^-{r1}\ar[dd]_-{1r}\xtwocell[rddd]{}<>{^\alpha}&\\
RRRR\ar[ru]^-{r11}\ar@/_3mm/[dd]_-{11i^*\!1}\ar@/^3mm/[dd]^-{11r}\xtwocell[dd]{}<>{^11\kappa}&&RR\ar[dd]^-{r}\\
&RR\ar[rd]^-{r}&\\
RRR\ar[rd]_-{1r}\ar[ru]_-{r1}\ar@{}[ruuu]|*[@]{\cong}\xtwocell[rr]{}<>{^\alpha}&&R\\
&RR\ar[ru]_-{r}&
}}}
\]\[
=
\vcenter{\hbox{\xymatrix@!0@C=15mm{
&RRR\ar[rd]^-{r1}\ar@/_3mm/[dd]_-{1i^*\!1}\ar@/^3mm/[dd]^-{1r}\xtwocell[dd]{}<>{^1\kappa}\xtwocell[rddd]{}<>{^\alpha}&\\
RRRR\ar[ru]^-{r11}\ar[dd]_-{11i^*\!1}&&RR\ar[dd]^-{r}\\
&RR\ar[rd]^-{r}&\\
RRR\ar[rd]_-{1r}\ar[ru]_-{r1}\ar@{}[ruuu]|*[@]{\cong}\xtwocell[rr]{}<>{^\alpha}&&R\\
&RR\ar[ru]_-{r}&
}}}
\]
and axiom~\eqref{ax:2SIM3} for $\mu_2$ is literally axiom~\eqref{ax:SKM4} for the skew monoidale. Now, axiom~\eqref{ax:2SIM1} for $\mu_0$ follows from the interchange law as we mentioned in Lemma~\ref{lem:BijectionLeftUnitVsMonadUnit} above, axiom~\eqref{ax:2SIM2} for $\mu_0$ is verified as follows,
\begin{align*}
\vcenter{\hbox{\xymatrix@!0@C=15mm{
&RRR\ar@/_3mm/[rd]_-{i^*\!11}\ar@/^3mm/[rd]^-{r1}\xtwocell[rd]{!<2mm,-1mm>}<>{^\kappa 1\ }&\\
RRRR\ar[ru]^-{i^*\!111}\ar[dd]_-{11i^*\!1}\ar[rd]_-{1i^*\!11}&&RR\ar@/_3mm/[dd]_-{i^*\!1}\ar@/^3mm/[dd]^-{r}\xtwocell[dd]{}<>{^\kappa}\\
&RRR\ar[dd]^-{1i^*\!1}\ar[ru]|-{i^*\!11}\ar@{}[uu]^*[@]{\cong}&\\
RRR\ar[rd]_-{1i^*\!1}\ar@{}[ru]|*[@]{\cong}&&R\\
&RR\ar[ru]_-{i^*\!1}\ar@{}[ruuu]|*=0[@]{\cong}&
}}}
&\stackrel{\phantom{\eqref{ax:SKM3'}}}{=}
\vcenter{\hbox{\xymatrix@!0@C=15mm{
&RRR\ar[dd]_-{1i^*\!1}\ar@/_3mm/[rd]_>>>{i^*\!11}\ar@/^3mm/[rd]^-{r1}\xtwocell[rd]{!<2mm,-1mm>}<>{^\kappa 1\ }&\\
RRRR\ar[ru]^-{i^*\!111}\ar[dd]_-{11i^*\!1}&&RR\ar@/_3mm/[dd]_-{i^*\!1}\ar@/^3mm/[dd]^-{r}\xtwocell[dd]{}<>{^\kappa}\\
&RR\ar[rd]|-{i^*\!1}\ar@{}[ru]|*=0[@]{\cong}&\\
RRR\ar[rd]_-{1i^*\!1}\ar[ru]_-{i^*\!11}\ar@{}[ruuu]|*[@]{\cong}&&R\\
&RR\ar[ru]_-{i^*\!1}\ar@{}[uu]|*[@]{\cong}&
}}}
\\
=
\vcenter{\hbox{\xymatrix@!0@C=15mm{
&RRR\ar@/_3mm/[dd]_-{1i^*\!1}\ar@/^3mm/[dd]^>>>{1r}\xtwocell[dd]{}<>{^1\kappa}\ar@/_3mm/@[]!<2mm,-2mm>;[rd]_>>{i^*\!11}\ar@/^3mm/[rd]^-{r1}\xtwocell[rd]{!<2mm,-1mm>}<>{^\kappa 1\ }&\\
RRRR\ar[ru]^-{i^*\!111}\ar[dd]_-{11i^*\!1}&&RR\ar[dd]^-{r}\\
&RR\ar[rd]|-{i^*\!1}\ar@{}[ru]_*[@]{\cong}&\\
RRR\ar[rd]_-{1i^*\!1}\ar[ru]_-{i^*\!11}\ar@{}[ruuu]|*[@]{\cong}&&R\\
&RR\ar[ru]_-{i^*\!1}\ar@{}[uu]|*[@]{\cong}&
}}}
&\stackrel{\eqref{ax:SKM3'}}{=}
\vcenter{\hbox{\xymatrix@!0@C=15mm{
&RRR\ar@/_3mm/[dd]_-{1i^*\!1}\ar@/^3mm/[dd]^-{1r}\xtwocell[dd]{}<>{^1\kappa}\ar[rd]^-{r1}\xtwocell[rddd]{}<>{^\alpha}&\\
RRRR\ar[ru]^-{i^*\!111}\ar[dd]_-{11i^*\!1}&&RR\ar[dd]^-{r}\\
&RR\ar@/_3mm/[rd]_-{i^*\!1}\ar@/^3mm/[rd]^-{r}\xtwocell[rd]{!<2mm,-1mm>}<>{^\kappa}&\\
RRR\ar[rd]_-{1i^*\!1}\ar[ru]_-{i^*\!11}\ar@{}[ruuu]|*[@]{\cong}&&R\\
&RR\ar[ru]_-{i^*\!1}\ar@{}[uu]|*[@]{\cong}&
}}}
\end{align*}
and axiom~\eqref{ax:2SIM3} for $\mu_0$ holds true as one can see below.
\[
\vcenter{\hbox{\xymatrix@!0@C=7.5mm@R=4.3mm{
&&R\ar@/^7mm/[rrdddddd]^-{1}&&\\
&&&&\\
RR\ar[rruu]^-{i^*\!1}\ar[dddd]_-{11i}\ar[rddd]|-{1i^*}\ar@/^7mm/[rrdddddd]^-{1}\xtwocell[rrdddddd]{}<>{^<-2.5>1\varepsilon\ }&&&&\\
&&&&\\
&&&&\\
&R\ar[rddd]|-{1i}&&&\\
RRR\ar[rrdd]_-{1i^*\!1}\ar@{}[ruu]|*=0[@]{\cong}&&&&R\\
&&&&\\
&&RR\ar[rruu]_-{i^*\!1}&&
}}}
=
\vcenter{\hbox{\xymatrix@!0@C=7.5mm@R=4.3mm{
&&R\ar[dddd]_-{1i}\ar@/^7mm/[rrdddddd]^-{1}\ar[rddd]|-{i^*}\xtwocell[rrdddddd]{}<>{^<-2.5>\varepsilon\ }&&\\
&&&&\\
RR\ar[rruu]^-{i^*\!1}\ar[dddd]_-{11i}&&&&\\
&&&R\ar[rddd]|-{i}&\\
&&RR\ar[rrdd]_-{i^*\!1}\ar@{}[ruu]|*=0[@]{\cong}&&\\
&&&&\\
RRR\ar[rrdd]_-{1i^*\!1}\ar[rruu]^-{i^*\!11}\ar@{}[rruuuuuu]|*=0[@]{\cong}&&&&R\\
&&&&\\
&&RR\ar[rruu]_-{i^*\!1}\ar@{}[uuuu]|*=0[@]{\cong}&&
}}}
\]\[
\stackrel{\eqref{ax:SKM5'}}{=}
\vcenter{\hbox{\xymatrix@!0@C=15mm{
&R\ar@/^7mm/[rddd]^-{1}\ar[dd]_-{1i}\xtwocell[rddd]{}<>{^\rho}&\\
RR\ar[ru]^-{i^*\!1}\ar[dd]_-{11i}&&\\
&RR\ar@/_3mm/[rd]_-{i^*\!1}\ar@/^3mm/[rd]^-{r}\xtwocell[rd]{}<>{^\kappa\ }&\\
RRR\ar[rd]_-{1i^*\!1}\ar[ru]^-{i^*\!11}\ar@{}[ruuu]|*=0[@]{\cong}&&R\\
&RR\ar[ru]_-{i^*\!1}&
}}}
\]
\begin{enumerate}[resume]
\item This item requires the existence of three 3-simplices, which amounts to verifying the three axioms~\eqref{ax:M1}, \eqref{ax:M2}, and \eqref{ax:M3} for the data defined previously. Axiom~\eqref{ax:M1} is nothing but the pentagon axiom~\eqref{ax:SKM1} for $\alpha$, axiom~\eqref{ax:M2} happens to be the same as \eqref{ax:OLA1} for $r$ as verified in Proposition~\ref{prop:SkewMonoidaleInducesOplaxAction}, and the following calculation proves axiom~\eqref{ax:M3}.
\end{enumerate}
\[
\vcenter{\hbox{\xymatrix@!0@C=15mm{
&RRR\ar@/_3mm/[rd]_-{i^*\!11}\ar@/^3mm/[rd]^-{r1}\xtwocell[rd]{!<2mm,-1mm>}<>{^\kappa 1\ }&\\
RRRR\ar[ru]^-{i^*\!111}\ar[dd]_-{11r}\ar[rd]_-{1i^*\!11}&&RR\ar[dd]^-{r}\\
&RRR\ar[dd]^-{1r}\ar[ru]|-{i^*\!11}\ar@{}[uu]^*[@]{\cong}&\\
RRR\ar[rd]_-{1i^*\!1}\ar@{}[ru]|*[@]{\cong}&&R\\
&RR\ar[ru]_-{i^*\!1}\ar@{}[ruuu]|*=0[@]{\cong}&
}}}
=
\vcenter{\hbox{\xymatrix@!0@C=15mm{
&RRR\ar[dd]_-{1r}\ar@/_3mm/[rd]_>>>{i^*\!11}\ar@/^3mm/[rd]^-{r1}\xtwocell[rd]{!<2mm,-1mm>}<>{^\kappa 1\ }&\\
RRRR\ar[ru]^-{i^*\!111}\ar[dd]_-{11r}&&RR\ar[dd]^-{r}\\
&RR\ar[rd]|-{i^*\!1}\ar@{}[ru]|*=0[@]{\cong}&\\
RRR\ar[rd]_-{1i^*\!1}\ar[ru]_-{i^*\!11}\ar@{}[ruuu]|*[@]{\cong}&&R\\
&RR\ar[ru]_-{i^*\!1}\ar@{}[uu]|*[@]{\cong}&
}}}
\]\[
\stackrel{\eqref{ax:SKM3'}}{=}
\vcenter{\hbox{\xymatrix@!0@C=15mm{
&RRR\ar[dd]_-{1r}\ar[rd]^-{r1}\xtwocell[rddd]{}<>{^\alpha}&\\
RRRR\ar[ru]^-{i^*\!111}\ar[dd]_-{11r}&&RR\ar[dd]^-{r}\\
&RR\ar@/_3mm/[rd]_-{i^*\!1}\ar@/^3mm/[rd]^-{r}\xtwocell[rd]{!<2mm,-1mm>}<>{^\kappa}&\\
RRR\ar[rd]_-{1i^*\!1}\ar[ru]_-{i^*\!11}\ar@{}[ruuu]|*[@]{\cong}&&R\\
&RR\ar[ru]_-{i^*\!1}\ar@{}[uu]|*[@]{\cong}&
}}}
\]
This completes the description of the monad of oplax actions induced by a right skew monoidale. The right skew monoidale induced by this monad of oplax actions as in Proposition~\ref{prop:MonadInducesSkewMonoidale} is the same as the original right skew monoidale; all the structure is literally the same, except maybe for the left unitor $\lambda$ which by the bijection in Lemma~\ref{lem:BijectionLeftUnitVsMonadUnit} is verified to be the same as the induced one. Conversely, the monad of oplax actions built from a right skew monoidale (as done in this proof) which is induced by a monad of oplax actions using Proposition~\ref{prop:MonadInducesSkewMonoidale} is also the same monad as the original one. Again, most of the structure is literally equal except maybe the associator of the 1-simplex oplax action $r^2$, which by Lemma~\ref{lem:RedundantOplaxActionAssociator}, is verified to be the same as the induced one; and the cell $\mu_0$, which by Proposition~\ref{prop:MonadInducesSkewMonoidale}, is also verified to be the same as the induced one.
\end{proof}
\section{Oplax Actions and Opmonoidal Arrows}\label{sec:OplaxActionsOpmonoidalArrows}

In this section, we will address the point (2) that we made at the beginning of Subection~\ref{subsec:OplaxActions}, which states that under mild conditions on $\mathcal{M}$, a simplicial object of oplax actions is weakly equivalent in $[\Delta^{\mathrm{op}},\Cat]$ to the lax-2-nerve of a bicategory of opmonoidal arrows. For a moment, allow us to be vague about the particular simplicial objects in question and look at Theorem~\ref{teo:Opmon_is_OplaxAct_Local}; this result provides an equivalence of categories of the following form:
\[
\Opmon(R\ot R,S\ot S)\simeq\OplaxAct(R;S).
\]
Observe that on the left hand side we have a hom-category and on the right hand side we have a category of 1-simplices of oplax actions. Our goal is to extend this equivalence to a weak equivalence of simplicial objects in $\Cat$ in such a way that the weak equivalence at the level of 1-simplices is the equivalence of the aforementioned theorem. Thus, the rest of this section will consist of proving similar equivalences for the remaining categories of simplices. To achieve this, we impose the following conditions on $\mathcal{M}$:
\begin{enumerate}[label={(\alph*)}]
\item For every object $R$ in $\mathcal{M}$ there is a chosen right bidual $R\dashv R\ot$.
\item For every object $R$ in $\mathcal{M}$ there is a chosen adjunction $i\dashv i^*$ as shown.
\[
\vcenter{\hbox{\xymatrix{
R\dtwocell_{i}^{i^*}{'\dashv}\\
I
}}}
\]
\item For every object $R$ the opposite of its chosen adjunction is opmonadic.
\[
\vcenter{\hbox{\xymatrix{
R\ot\dtwocell_{i\ob}^{i\ot}{'\dashv}\\
I
}}}
\]
\item Tensoring with objects in $\mathcal{M}$ preserves opmonadicity.
\item Composing with arrows in $\mathcal{M}$ preserves coequalisers of reflexive pairs in the hom categories of $\mathcal{M}$.
\end{enumerate}

Using conditions (a) and (b) we can be explicit about which simplicial objects in $\Cat$ we are considering; it is not $\OplaxAct(\mathcal{M})$ nor the lax-2-nerve of $\Opmon(\mathcal{M})$, but a small alteration of each of them. This adjustment controls each of the collections of 0-simplices making them isomorphic to the set of objects of $\mathcal{M}$. Conditions (c)-(e) are a global version of the hypotheses of \cite[Theorem~5.2]{Lack2012} which are the same that those of Theorem~\ref{teo:Opmon_is_OplaxAct_Local}, and they let us control the rest of the categories of simplices.

Condition (a) is the same as requiring that $\mathcal{M}$ is right autonomous, provided a suitable version of the axiom of choice holds, as it implies that taking chosen right biduals is a strong monoidal pseudofunctor which is a local equivalence.
\[
\vcenter{\hbox{\xymatrix{
\mathcal{M}^{\mathrm{rev op}}\ar[r]^-{(\ )\ot}&\mathcal{M}
}}}
\]
Using condition (a) consider the bicategory $\Opmon^{\mathrm{e}}(\mathcal{M})$ of the enveloping monoidales $R\ot R$ induced by the \emph{chosen} bidualities, opmonoidal arrows between them, and opmonoidal cells between them; it is a full subbicategory of $\Opmon(\mathcal{M})$. We denote by $\Opmon^{\mathrm{e}}_n$ the category of $n$-simplices of the lax-2-nerve of $\Opmon^{\mathrm{e}}(\mathcal{M})$. Furthermore, the collection of objects of $\Opmon^{\mathrm{e}}(\mathcal{M})$ is isomorphic to that of $\mathcal{M}$, in other words $\Opmon^{\mathrm{e}}_0\cong\Ob\mathcal{M}$.

Condition (b) allows us to consider the subsimplicial object $\OplaxAct^{\mathrm e}(\mathcal{M})$ of $\OplaxAct(\mathcal{M})$ whose simplices are all those simplices in $\OplaxAct(\mathcal{M})$ that have 0-faces $(R,i\dashv i^*)$, where $i\dashv i^*$ is the \emph{chosen} adjunction for $R$. We denote by $\OplaxAct^{\mathrm{e}}_n$ the category of $n$-simplices of $\OplaxAct^{\mathrm{e}}(\mathcal{M})$. Now both the collection of 0-simplices of the lax-2-nerve of $\Opmon^{\mathrm e}(\mathcal{M})$ and the collection of 0-simplices of $\OplaxAct^{\mathrm e}(\mathcal{M})$ are isomorphic to the collection of objects of $\mathcal{M}$.
\[
\Opmon^{\mathrm{e}}_0\cong\Ob\mathcal{M}\cong\OplaxAct^{\mathrm e}_0
\]

Condition (c) might seem slightly artificial, but let us recall an example to see that it is quite reasonable. In $\Mod_k$ we may pick as our chosen adjunctions for $k$-algebras $R$ the ones induced by the unit morphism $\xymatrix@1@C=5mm{i:k\ar[r]&R}$ which, as any other $k$-algebra morphism, defines an adjunction $i\dashv i^*$ in $\Mod_k$. In this case, this adjunction and its opposite adjunction are monadic and opmonadic. In particular, opmonadicity of $i\ob\dashv i\ot$ states that right $R\ot$-modules may be viewed as arrows with target $R\ot$ in $\Mod_k$ or as a $k$-algebra together with a left $R\ot$-action.

Conditions (d) and (e) ensure the compatibility between opmonadicity and the rest of the structure of $\mathcal{M}$, and are what we called opmonadic-friendly monoidal bicategory in Definition~\ref{def:OpmonadicFriendly}. Now, Theorem~\ref{teo:Opmon_is_OplaxAct_Local} implies that the categories of 1-simplices are equivalent, and furthermore, these equivalences commute with the face and degeneracy functors.
\begin{equation}\label{eq:FacesDegeneracies01}
\vcenter{\hbox{\xymatrix@!0@R=25mm@C=35mm{
\Opmon^{\mathrm{e}}_0\ar[d]|-{\cs_0}\ar[r]_-{\cong}&\OplaxAct^{\mathrm{e}}_0\ar[d]|-{\cs_0}\\
\Opmon^{\mathrm{e}}_1\ar@<3mm>[u]|-{\partial_0}\ar@<-3mm>[u]|-{\partial_1}\ar[r]^-{\simeq}&\OplaxAct^{\mathrm{e}}_1\ar@<3mm>[u]|-{\partial_0}\ar@<-3mm>[u]|-{\partial_1}
}}}
\end{equation}
\begin{rem}\label{rem:degeneracies}
It is easy to see that square commutes strictly with respect to the face functors. However, it commutes only up to isomorphism with respect to the degeneracy functors. If we are to construct a genuine simplicial morphism, the square must commute strictly with the degeneracy functors too. For this to happen one must modify the definition of the degenerate 1-simplices of $\OplaxAct(\mathcal{M})$ in the following way: instead of having $\cs_0(R,i\dashv i^*)=\xymatrix@1@C=5mm{i^*\!1:RR\ar[r]&R}$ one has to take the composite below.
\[
\vcenter{\hbox{\xymatrix{
\cs_0(R,i\dashv i^*)=RR\ar[r]^-{1i\ob 1}&RR\ot R\ar[r]^-{1}&RR\ot R\ar[r]^-{e1}&R,
}}}
\]
But this approach is inconvenient since it obscures our calculations. This behaviour continues to happen for the other dimensions and similar adjustments may be done for the higher degeneracy functors. Thus, we are going to show the existence of a pseudosimplicial morphism between the lax-2-nerve of $\Opmon^{\mathrm{e}}(\mathcal{M})$ and $\OplaxAct^{\mathrm{e}}(\mathcal{M})$ which may be easily strictified by changing how the degeneracies of $\OplaxAct^{\mathrm{e}}(\mathcal{M})$ are defined.
\end{rem}

To prove that the categories of $n$-simplices for $n\in\{2,3\}$ of the lax-2-nerve of $\Opmon^{\mathrm{e}}(\mathcal{M})$ are equivalent to those of $\OplaxAct^{\mathrm{e}}(\mathcal{M})$, no further assumptions are required on $\mathcal{M}$. We first simplify the task by showing that the categories of $n$-simplices \emph{with fixed 0-faces} are equivalent. Then, these equivalences break down into two steps that are similar to those used for 1-simplices in Theorem~\ref{teo:Opmon_is_OplaxAct_Local}; one using the transposition along bidualities, and another using the opmonadicity of certain adjunctions. In the case of 1-simplices, these two steps could be performed in any order, this fact manifests as the commutative square in \cite[Corollary~6.11]{Abud2018}. However, it seems that for $n$-simplices there is only one possible way to perform these steps, which is: starting from the opmonoidal simplices, transpose first and then use opmonadicity. We begin with the case of 2-simplices by introducing the categories that form part of this process.

\begin{notat}
Limited by the shortage of letters in the Latin and Greek alphabets, we introduce a naming convention for the opmonoidal arrows and oplax actions that are of the type that correspond to each other under \cite[Corollary~6.11]{Abud2018}, even when its hypotheses are not satisfied. We name them with the same letter and differentiate them with mathematical accents; for example,
\begin{center}
\begin{tabular}{c p{85mm} }
$
\vcenter{\hbox{\xymatrix@R=0mm{
\bar{s}:R\ot R\ar[r]&S\ot S\\
}}}
$
&Opmonoidal arrow between enveloping monoidales.\\
$
\vcenter{\hbox{\xymatrix@R=0mm{
\widehat{s}:SR\ot R\ar[r]&S\\
}}}
$
&Oplax Action with respect to an enveloping monoidale $R\ot R$.\\
$
\vcenter{\hbox{\xymatrix@R=0mm{
s:SR\ar[r]&S
}}}
$
&Oplax action with respect to a skew monoidale $R$ induced by an adjunction $i\dashv i^*$.
\end{tabular}
\end{center}
This will help the reader to figure out which things ought to be the same, as well as not to lose focus by going back to the definitions too often to figure out what is each thing with a new name.
\end{notat}

\begin{defi}
For three bidualities $R\dashv R\ot$, $S\dashv S\ot$, and $T\dashv T\ot$ the category denoted by $\Opmon(R\ot R,S\ot S,T\ot T)$ is the category of 2-simplices with fixed 0-faces $R\ot R$, $S\ot S$, and $T\ot T$ of the lax-2-nerve of $\Opmon(\mathcal{M})$. An object in this category consists of three opmonoidal arrows $(\bar{s},\bar{t},\bar{v})$, and an opmonoidal cell $\bar{\alpha}$ with a triangular shape,
\[
\vcenter{\hbox{\xymatrix@!0@C=12mm@R=5mm{
R\ot R\ar[dr]_{\bar{s}}\ar@/^4mm/[rr]^{\bar{v}}\xtwocell[rr]{}<>{^{\bar{\alpha}}}&&T\ot T\\
&S\ot S\ar[ur]_{\bar{t}}&
}}}
\]
while a morphism is a triple $(\bar{\sigma},\bar{\tau},\bar{\nu})$ of opmonoidal cells that satisfy the following equation.
\begin{equation}
\tag{OM6}\label{ax:OM6}
\vcenter{\hbox{\xymatrix@R=4mm@C=10mm{
R\ot R\ar@/^3mm/[rd]^-{\bar{s}'}\ar@/_3mm/[rd]_-{\bar{s}}\xtwocell[rd]{}<>{^\bar{\sigma}}\ar@/^6mm/[rr]^-{\bar{v}'}\xtwocell[rr]{}<>{^<-1>\bar{\alpha}'}&&T\ot T\\
&S\ot S\ar@/^3mm/[ru]^-{\bar{t}'}\ar@/_3mm/[ru]_-{\bar{t}}\xtwocell[ru]{}<>{^\bar{\tau}}&
}}}
\quad=\quad
\vcenter{\hbox{\xymatrix@R=4mm@C=10mm{
R\ot R\ar@/_3mm/[rd]_-{\bar{s}}\ar@/^6mm/[rr]^-{\bar{v}'}\ar@/_1mm/[rr]|{\bar{v}}\xtwocell[rr]{}<>{^<-1.5>\bar{\nu}}\xtwocell[rr]{}<>{^<2.5>\bar{\alpha}}&&T\ot T\\
&S\ot S\ar@/_3mm/[ru]_-{\bar{t}}&
}}}
\end{equation}
\end{defi}

\begin{defi}
For two enveloping monoidales $R\ot R$ and $S\ot S$ induced by bidualities $R\dashv R\ot$ and $S\dashv S\ot$, and an object $T$ in $\mathcal{M}$, define a category $\mathcal{A}(R;S;T)$. An object consists of three oplax actions,
\[
\vcenter{\hbox{\xymatrix{
\widehat{s}:SR\ot R\ar[r]&S
}}}
\qquad
\vcenter{\hbox{\xymatrix{
\widehat{t}:TS\ot S\ar[r]&T
}}}
\qquad
\vcenter{\hbox{\xymatrix{
\widehat{v}:TR\ot R\ar[r]&T
}}}
\]
and a quadrangular cell $\widehat{\alpha}$,
\[
\vcenter{\hbox{\xymatrix@!0@=15mm{
**[l]TS\ot SR\ot R\ar[r]^-{\widehat{t}11}\ar[d]_-{11\widehat{s}}\xtwocell[rd]{}<>{^\widehat{\alpha}}&TR\ot R\ar[d]^-{\widehat{v}}\\
TS\ot S\ar[r]_-{\widehat{t}}&T
}}}
\]
satisfying three axioms.
\begin{align}
\tag{A1}\label{ax:A1}
\vcenter{\hbox{\xymatrix@!0@C=15mm{
&TS\ot SR\ot R\ar[rd]^>>>{\widehat{t}11}&\\
**[l]TS\ot SS\ot SR\ot R\ar[ru]^-{\widehat{t}1111}\ar[dd]_-{1111\widehat{s}}\ar[rd]|-{11e111}\xtwocell[rr]{}<>{^\widehat{t}^211\quad}&&TR\ot R\ar[dd]^-{\widehat{v}}\\
&TS\ot SR\ot R\ar[dd]^-{11\widehat{s}}\ar[ru]^-{\widehat{t}11}\xtwocell[rd]{}<>{^\widehat{\alpha}}&\\
**[l]TS\ot SS\ot S\ar[rd]_-{11e1}\ar@{}[ru]|*=0[@]{\cong}&&T\\
&TS\ot S\ar[ru]_-{\widehat{t}}&
}}}
\quad&=\hspace{-3mm}
\vcenter{\hbox{\xymatrix@!0@C=15mm{
&TS\ot SR\ot R\ar[rd]^>>>{\widehat{t}11}\ar[dd]_-{11\widehat{s}}\xtwocell[rddd]{}<>{^\widehat{\alpha}}&\\
**[l]TS\ot SS\ot SR\ot R\ar[ru]^-{\widehat{t}1111}\ar[dd]_-{1111\widehat{s}}&&TR\ot R\ar[dd]^-{\widehat{v}}\\
&TS\ot S\ar[rd]^-{\widehat{t}}&\\
**[l]TS\ot SS\ot S\ar[rd]_-{11e1}\ar[ru]|-{\widehat{t}11}\ar@{}[ruuu]|*=0[@]{\cong}\xtwocell[rr]{}<>{^\widehat{t}^2\ }&&T\\
&TS\ot S\ar[ru]_-{\widehat{t}}&
}}}
\\
\tag{A2}\label{ax:A2}
\vcenter{\hbox{\xymatrix@!0@C=15mm{
&TR\ot RR\ot R\ar[rd]^>>>{\widehat{v}11}&\\
**[l]TS\ot SR\ot RR\ot R\ar[ru]^-{\widehat{t}1111}\ar[dd]_-{1111e1}\ar[rd]|-{11\widehat{s}11}\xtwocell[rddd]{}<>{^11\widehat{s}^2\quad}\xtwocell[rr]{}<>{^\widehat{\alpha} 11\quad}&&TR\ot R\ar[dd]^-{\widehat{v}}\\
&TS\ot SR\ot R\ar[dd]^-{11\widehat{s}}\ar[ru]^-{\widehat{t}11}\xtwocell[rd]{}<>{^\widehat{\alpha}}&\\
**[l]TS\ot SR\ot R\ar[rd]_-{11\widehat{s}}&&T\\
&TS\ot S\ar[ru]_-{\widehat{t}}&
}}}
\quad&=\hspace{-3mm}
\vcenter{\hbox{\xymatrix@!0@C=15mm{
&TR\ot RR\ot R\ar[rd]^>>>{\widehat{v}11}\ar[dd]_-{11e1}\xtwocell[rddd]{}<>{^\widehat{v}^2}&\\
**[l]TS\ot SR\ot RR\ot R\ar[ru]^-{\widehat{t}1111}\ar[dd]_-{1111e1}&&TR\ot R\ar[dd]^-{\widehat{v}}\\
&TR\ot R\ar[rd]^-{\widehat{v}}&\\
**[l]TS\ot SR\ot R\ar[rd]_-{11\widehat{s}}\ar[ru]_-{\widehat{t}11}\ar@{}[ruuu]|*=0[@]{\cong}\xtwocell[rr]{}<>{^\widehat{\alpha}}&&T\\
&TS\ot S\ar[ru]_-{\widehat{t}}&
}}}
\\
\tag{A3}\label{ax:A3}
\vcenter{\hbox{\xymatrix@!0@C=15mm{
&T\ar@/^7mm/[dddr]^-{1}&\\
TS\ot S\ar[ru]^-{\widehat{t}}\ar[dd]_-{111n}\ar@/^7mm/[dddr]^-{1}\xtwocell[rddd]{}<>{^11\widehat{t}^0\quad}&&\\
&&\\
**[l]TS\ot SR\ot R\ar[rd]_-{11\widehat{s}}&&T\phantom{R\ot R}\\
&TS\ot S\ar[ru]_-{\widehat{t}}&
}}}
\quad&=\hspace{-3mm}
\vcenter{\hbox{\xymatrix@!0@C=15mm{
&T\ar@/^7mm/[dddr]^-{1}\ar[dd]_-{1n}\xtwocell[rddd]{}<>{^\widehat{v}^0\ }&\\
TS\ot S\ar[ru]^-{\widehat{t}}\ar[dd]_-{111n}&&\\
&TR\ot R\ar[rd]^-{\widehat{v}}&\\
**[l]\phantom{R\ot R}TS\ot SR\ot R\ar[rd]_-{11\widehat{s}}\ar[ru]_-{\widehat{t}11}\ar@{}[ruuu]|*=0[@]{\cong}\xtwocell[rr]{}<>{^\widehat{\alpha}}&&T\\
&TS\ot S\ar[ru]_-{\widehat{t}}&
}}}
\end{align}
The morphisms between them consist of three cells of oplax actions
\[
\vcenter{\hbox{\xymatrix{
**[l]R\ot R\ar@/^4mm/[r]|-@{|}^-{\widehat{s}'}\ar@/_4mm/[r]|-@{|}_-{\widehat{s}}\xtwocell[r]{}<>{^\widehat{\sigma}}\ar@{}@<-.5mm>[r]|{-}&S
}}}
\qquad
\vcenter{\hbox{\xymatrix{
**[l]S\ot S\ar@/^4mm/[r]|-@{|}^-{\widehat{t}'}\ar@/_4mm/[r]|-@{|}_-{\widehat{t}}\xtwocell[r]{}<>{^\widehat{\tau}}\ar@{}@<-.5mm>[r]|{-}&T
}}}
\qquad
\vcenter{\hbox{\xymatrix{
**[l]R\ot R\ar@/^4mm/[r]|-@{|}^-{\widehat{v}'}\ar@/_4mm/[r]|-@{|}_-{\widehat{v}}\xtwocell[r]{}<>{^\widehat{\nu}}\ar@{}@<-.5mm>[r]|{-}&T
}}}
\]
satisfying the following equation.
\begin{equation}
\tag{A4}\label{ax:A4}
\vcenter{\hbox{\xymatrix@!0@=19mm{
**[l]TS\ot SR\ot R\ar@/^3mm/[r]^-{\widehat{t}'11}\ar@/^3mm/[d]^-{11\widehat{s}'}\ar@/_3mm/[d]_-{11\widehat{s}}\xtwocell[d]{}<>{^11\widehat{\sigma}}\xtwocell[rd]{}<>{^<-2>\widehat{\alpha}'}&TR\ot R\ar@/^3mm/[d]^-{\widehat{v}'}\\
TS\ot S\ar@/^3mm/[r]^-{\widehat{t}'}\ar@/_3mm/[r]_-{\widehat{t}}\xtwocell[r]{}<>{^\widehat{\tau}}&T
}}}
\quad\ \ \ =\quad\!\!\!
\vcenter{\hbox{\xymatrix@!0@=19mm{
**[l]TS\ot SR\ot R\ar@/^3mm/[r]^-{\widehat{t}'11}\ar@/_3mm/[r]_-{\widehat{t}11}\xtwocell[r]{}<>{^\widehat{\tau} 11\quad}\ar@/_3mm/[d]_-{1\widehat{s}}\xtwocell[rd]{}<>{^<2>\widehat{\alpha}}&TR\ot R\ar@/^3mm/[d]^-{\widehat{v}'}\ar@/_3mm/[d]_-{\widehat{v}}\xtwocell[d]{}<>{^\widehat{\nu}}\\
TS\ot S\ar@/_3mm/[r]_-{\widehat{t}}&T
}}}
\end{equation}
Composition and identities are given as in the category
\[
\OplaxAct(R\ot R;S)\times\OplaxAct(S\ot S;T)\times\OplaxAct(R\ot R;T).
\]
\end{defi}

\begin{rem}\label{rem:SkewMonBicaty}
At this point it is worth mentioning that oplax actions with respect to enveloping monoidales $\xymatrix@1@C=5mm{\hat{s}:SR\ot R\ar[r]&S}$ may be rewritten using the product
\[
X\underset{R}{\circ}Y:=XR\ot Y
\]
defined for a biduality $R\dashv R\ot$ in $\mathcal{M}$. So, the data for an oplax action is now an arrow $\xymatrix@1@C=5mm{\widehat{s}:S\underset{R}{\circ}R\ar[r]&S}$ and cells as shown,
\[
\vcenter{\hbox{\xymatrix@!0@=15mm{
**[l]S\underset{R}{\circ}R\underset{R}{\circ}R\ar[r]^-{\widehat{s}\underset{R}{\circ}1}\ar[d]_-{1\underset{R}{\circ}\ell}\xtwocell[rd]{}<>{^\widehat{s}^2\ }&S\underset{R}{\circ}R\ar[d]^-{\widehat{s}}\\
S\underset{R}{\circ}R\ar[r]_-{\widehat{s}}&S
}}}
\qquad
\vcenter{\hbox{\xymatrix@!0@=15mm{
S\underset{R}{\circ}R\ar[d]_-{\widehat{s}}&S\ar[l]_-{\mathfrak{r}}\ar[dl]^-{1}\xtwocell[ld]{}<>{^<2>\ \ \widehat{s}^0}\\
S
}}}
\]
where $\xymatrix@1@C=5mm{\ell=e1:RR\ot R\ar[r]&R}$ and $\xymatrix@1@C=5mm{\mathfrak{r}=1n:S\ar[r]&SR\ot R}$. In the same way, we may also rewrite the objects $(\widehat{s},\widehat{t},\widehat{v},\widehat{\alpha})$ of the categories $\mathcal{A}(R;S;T)$ defined above. Hence, the cell $\widehat{\alpha}$ becomes
\[
\vcenter{\hbox{\xymatrix@!0@=15mm{
**[l]T\underset{S}{\circ}S\underset{R}{\circ}R\ar[r]^-{\widehat{t}\underset{R}{\circ}1}\ar[d]_-{1\underset{S}{\circ}\widehat{s}}\xtwocell[rd]{}<>{^\widehat{\alpha}}&T\underset{R}{\circ}R\ar[d]^-{\widehat{v}}\\
T\underset{S}{\circ}S\ar[r]_-{\widehat{t}}&T
}}}
\]
and the axioms change accordingly. For example, axiom~\eqref{ax:A1} becomes the following equation.
\[
\vcenter{\hbox{\xymatrix@!0@C=19mm@R=10.5mm{
&T\underset{S}{\circ} S\underset{R}{\circ} R\ar[rd]^-{\widehat{t}\underset{R}{\circ}1}&\\
**{!<7mm>}T\underset{S}{\circ} S\underset{S}{\circ} S\underset{R}{\circ} R\ar[ru]^-{\widehat{t}\underset{S}{\circ}1\underset{R}{\circ}1}\ar[dd]_-{1\underset{S}{\circ}1\underset{S}{\circ}\widehat{s}}\ar[rd]|-{1\underset{S}{\circ}\ell\underset{R}{\circ}1}\xtwocell[rr]{}<>{^\widehat{t}^2\underset{R}{\circ}1\quad}&&T\underset{R}{\circ} R\ar[dd]^-{\widehat{v}}\\
&T\underset{S}{\circ} S\underset{R}{\circ} R\ar[dd]^-{1\underset{S}{\circ}\widehat{s}}\ar[ru]|-{\widehat{t}\underset{R}{\circ}1}\xtwocell[rd]{}<>{^\widehat{\alpha}}&\\
**[l]T\underset{S}{\circ} S\underset{S}{\circ} S\ar[rd]_-{1\underset{S}{\circ}\ell}\ar@{}[ru]|*=0[@]{\cong}&&T\\
&T\underset{S}{\circ} S\ar[ru]_-{\widehat{t}}&
}}}
\hspace{2mm}=\hspace{-5mm}
\vcenter{\hbox{\xymatrix@!0@C=19mm@R=10.5mm{
&T\underset{S}{\circ} S\underset{R}{\circ} R\ar[rd]^-{\widehat{t}\underset{R}{\circ}1}\ar[dd]_-{1\underset{S}{\circ}\widehat{s}}\xtwocell[rddd]{}<>{^\widehat{\alpha}}&\\
**{!<5mm>}T\underset{S}{\circ} S\underset{S}{\circ} S\underset{R}{\circ} R\ar[ru]^-{\widehat{t}\underset{S}{\circ}1\underset{R}{\circ}1}\ar[dd]_-{1\underset{S}{\circ}1\underset{S}{\circ}\widehat{s}}&&T\underset{R}{\circ} R\ar[dd]^-{\widehat{v}}\\
&T\underset{S}{\circ} S\ar[rd]^-{\widehat{t}}&\\
**[l]T\underset{S}{\circ} S\underset{S}{\circ} S\ar[rd]_-{1\underset{S}{\circ}\ell}\ar[ru]|-{\widehat{t}\underset{S}{\circ}1}\ar@{}[ruuu]|*=0[@]{\cong}\xtwocell[rr]{}<>{^\widehat{t}^2\ }&&T\\
&T\underset{S}{\circ} S\ar[ru]_-{\widehat{t}}&
}}}
\]
This approach is taken by Lack and Street in \cite[Section 5]{Lack2012} where they use the fact that the product $\circ_R$ turns the bicategory $\mathcal{M}$ into a \emph{skew monoidal bicategory} which they call $\mathcal{M}_R$, where the skew unit is $R$. It is possible to define oplax actions in skew monoidal bicategories, and under this perspective, oplax actions with respect to an enveloping monoidale $R\ot R$ in $\mathcal{M}$ are oplax actions with respect to the unit monoidale in $\mathcal{M}_R$. We refer the reader to Lack and Street's paper for more about skew monoidal bicategories. A full definition may be found in \cite[Definition 5.2]{Buckley2016}.
\end{rem}

We are now ready to address the equivalence between the categories of 2-simplices of opmonoidal arrows and oplax actions, and we begin with the transposition step mentioned earlier which in fact does not require any of the extra assumptions on the monoidal bicategory $\mathcal{M}$. This transposition step consists of an equivalence of categories, and the map that defines it is not trivial. The idea behind it comes from examining the equivalence in \cite[Theorem 5.1]{Lack2012} between opmonoidal monads on an enveloping monoidale $R\ot R$ in $\mathcal{M}$ and right skew monoidales with a given skew unit on the object $R$ in the skew monoidal bicategory $\mathcal{M}_R$. One has to look at how the multiplication of an opmonoidal monad is mapped to the associator of the corresponding right skew monoidale and generalise appropriately.

\begin{teo}\label{teo:Opmon2simplexTransposition}
For every three bidualities $R\dashv R\ot$, $S\dashv S\ot$ and $T\dashv T\ot$ in a monoidal bicategory $\mathcal{M}$
there is an equivalence of categories,
\[
\Opmon(R\ot R,S\ot S,T\ot T)\simeq\mathcal{A}(R;S;T)
\]
given on objects by
\[
\vcenter{\hbox{\xymatrix@!0@C=12mm@R=5mm{
R\ot R\ar[dr]_{\bar{s}}\ar@/^4mm/[rr]^{\bar{v}}\xtwocell[rr]{}<>{^{\bar{\alpha}}}&&T\ot T\\
&S\ot S\ar[ur]_{\bar{t}}&
}}}
\vcenter{\hbox{\xymatrix{
\ar@{|->}[r]^-{P}&
}}}
\vcenter{\hbox{\xymatrix@!0@=23mm{
TS\ot SR\ot R\ar[r]^-{1\bar{t}11}\ar[d]|-{111\bar{s}}\ar@/^9mm/[rr]^-{\widehat{t}11}\ar@/_9mm/[dd]_-{11\widehat{s}}\xtwocell[dr]{}<>{^1\bar{t}\bar{\alpha}\quad}&TT\ot TR\ot R\ar[d]|-{111\bar{v}}\ar[r]^-{e111}&TR\ot R\ar[d]^-{1\bar{v}}\ar@/^9mm/[dd]^-{\widehat{v}}\\
TS\ot SS\ot S\ar[r]^-{1\bar{t}\bar{t}}\ar[d]|-{11e1}\xtwocell[dr]{}<>{^\bar{t}^2}&TT\ot TT\ot T\ar[d]|-{11e1}\ar[r]^-{e111}\ar@{}[ru]|*[@]{\cong}&TT\ot T\ar[d]^-{e1}\\
TS\ot S\ar[r]_-{1\bar{t}}\ar@/_9mm/[rr]_-{\widehat{t}}&TT\ot T\ar[r]_-{e1}\ar@{}[ru]|*[@]{\cong}&T
}}}
\]
where $\widehat{s}$ is obtained by transposition along $S\dashv S\ot$, and $\widehat{t}$ and $\widehat{v}$ are obtained by transposition along $T\dashv T\ot$.
\end{teo}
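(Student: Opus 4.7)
The plan is to show that $P$ is fully faithful and essentially surjective by decomposing it into two independent transposition steps, one along the biduality $S\dashv S\ot$ (affecting the edge $\bar{s}$) and one along $T\dashv T\ot$ (affecting both $\bar{t}$ and $\bar{v}$, as well as the 2-cell component $\bar{\alpha}$). These two transpositions commute in the appropriate sense and, applied successively, recover the whole assignment $P$. No opmonadic-friendliness is needed at this stage: we stop after the purely biduality-driven transposition, without performing the further precomposition with an opmonadic left adjoint that appears in Theorem~\ref{teo:Opmon_is_OplaxAct_Local}.

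First I would upgrade the 1-simplex part of the correspondence to an equivalence of the form $\Opmon(R\ot R, S\ot S)\simeq\OplaxAct(R\ot R;S)$, where the right-hand side denotes oplax actions with respect to the enveloping monoidale $R\ot R$ (this is the content of Remark~\ref{rem:SkewMonBicaty}, so that $\widehat{s}$ may be viewed as $S\underset{R}{\circ}R\to S$). The equivalence is given object-wise by transposition along $S\dashv S\ot$, and on morphisms by whiskering with the counit $e$ and the unit $n$ of the biduality; the triangle equations immediately give naturality and invertibility up to coherent isomorphism. Applying this to each of the three edges produces the underlying bijection $(\bar{s},\bar{t},\bar{v})\leftrightarrow(\widehat{s},\widehat{t},\widehat{v})$.

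With the edges fixed, I would then show that the assignment $\bar{\alpha}\mapsto\widehat{\alpha}$ prescribed by the pasting in $P$ is a bijection of cells. Its inverse takes $\widehat{\alpha}$, pre-whiskers it with a unit of the biduality $T\dashv T\ot$ on the codomain side, and ``untransposes'' along $S\dashv S\ot$ in the middle; the resulting cell is $\bar{\alpha}$, and the composites in both directions reduce to identities by the triangle equations of the two bidualities, together with the coherence law for the interchange isomorphisms in the Gray monoid $\mathcal{M}$.

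The main obstacle is matching the three axioms of a 2-simplex of opmonoidal arrows (expressing compatibility of $\bar{\alpha}$ with the multiplication constraints $\bar{s}^2,\bar{t}^2,\bar{v}^2$ and with the unit constraints $\bar{s}^0,\bar{t}^0,\bar{v}^0$) with axioms \eqref{ax:A1}, \eqref{ax:A2}, and \eqref{ax:A3} for $\widehat{\alpha}$. My plan here is to precompose any axiom on either side with the appropriate biduality unit $n$, slide it through the pasting diagram using the Gray monoid axioms, and collapse the resulting snake using a triangle equation; the other axiom then appears after a pseudofunctoriality rearrangement. Once the axioms match, the bijection on morphisms follows almost formally: \eqref{ax:A4} is literally the transpose of \eqref{ax:OM6} along the bidualities, by naturality of transposition with respect to whiskering. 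This yields essential surjectivity and full faithfulness of $P$, hence the desired equivalence.
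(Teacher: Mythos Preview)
Your plan treats the assignment $P$ as ``mere transposition'' along the two bidualities, to be undone by triangle equations. This is the main gap. Look again at the pasting that defines $P(\bar\alpha)$: it contains the opmonoidal composition constraint $\bar{t}^2$ as an essential ingredient, not just units and counits of bidualities. Consequently the candidate inverse cannot be just ``pre-whisker with a unit of $T\dashv T\ot$ and untranspose along $S\dashv S\ot$''; in the paper the inverse $P'(\widehat\alpha)$ is built using the oplax-action unit constraint $\widehat{t}^0$. Showing that $P$ and $P'$ are mutually inverse therefore cannot reduce to the triangle equations of the bidualities: the computation of $P'P(\bar\alpha)$ uses \eqref{ax:OM2} for $\bar t$ (via $\bar t^0$ cancelling $\bar t^2$), and the computation of $PP'(\widehat\alpha)=\widehat\alpha$ requires axiom~\eqref{ax:A1}. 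The paper even remarks, right after the proof, that this step ``is no longer mere transposition via the universal property of bidualities between the cells $\bar\alpha$ and cells $\widehat\alpha$ as in the case of 1-simplices''.

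This is connected to a miscount in your axiom-matching plan. On the opmonoidal side an opmonoidal cell $\bar\alpha$ satisfies \emph{two} axioms, \eqref{ax:OM4} and \eqref{ax:OM5}, not three; on the $\mathcal{A}$ side the cell $\widehat\alpha$ must satisfy \emph{three} axioms \eqref{ax:A1}, \eqref{ax:A2}, \eqref{ax:A3}. The correspondence is asymmetric: \eqref{ax:OM4} yields both \eqref{ax:A1} and \eqref{ax:A2}, and \eqref{ax:OM5} yields \eqref{ax:A3}; conversely \eqref{ax:A2} gives back \eqref{ax:OM4} and \eqref{ax:A3} gives back \eqref{ax:OM5}, while the ``extra'' axiom \eqref{ax:A1} is exactly what is consumed in proving $PP'=\mathrm{id}$. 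Your proposal to ``precompose any axiom with the appropriate biduality unit $n$ and collapse a snake'' does not account for this asymmetry and would not produce \eqref{ax:A1} out of nothing, nor close the bijection without it.
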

\begin{proof}

By \cite[Theorem~6.7]{Abud2018} one has the following equivalence of categories,
\begin{multline}
\label{eq:P1}
\Opmon(R\ot R,S\ot S)\times\Opmon(S\ot S,T\ot T)\times\Opmon(R\ot R,T\ot T)\\
\simeq\OplaxAct(R\ot R;S)\times\OplaxAct(S\ot S;T)\times\OplaxAct(R\ot R;T)
\end{multline}
which means that triples $(\bar{s},\bar{t},\bar{v})$ of opmonoidal arrows as in the statement are already in equivalence with their transpose oplax actions $(\widehat{s},\widehat{t},\widehat{v})$, so now one may focus on their fillings.

Hence, fix a triple $(\bar{s},\bar{t},\bar{v})$ of opmonoidal arrows on the left hand side of $\eqref{eq:P1}$, on the right hand side, fix the corresponding triple $(\widehat{s},\widehat{t},\widehat{v})$ of oplax actions, and fix isomorphisms as shown below that witness the correspondence.
\begin{equation}\label{iso:transposition}
\vcenter{\hbox{\xymatrix@!0@C=15mm{
R\ot R\ar[rd]_-{n11}\ar[rr]^-{\bar{s}}&&S\ot S\\
&S\ot SR\ot R\ar[ru]_-{1\widehat{s}}\ar@{}[u]|>>>*=0[@]{\cong}&
}}}
\vcenter{\hbox{\xymatrix@!0@C=15mm{
&SS\ot S\ar[rd]^-{e1}\ar@{}[d]|>>>*=0[@]{\cong}&\\
SR\ot R\ar[ru]^-{1\bar{s}}\ar[rr]_-{\widehat{s}}&&S
}}}
\end{equation}
\begin{claim}
The assignation $P$ defines a bijection between the set of opmonoidal cells $\bar{\alpha}$ and the set of cells $\widehat{\alpha}$ that satisfy \eqref{ax:A1}, \eqref{ax:A2}, and \eqref{ax:A3}.
\begin{multline*}
\left\lbrace
\vcenter{\hbox{\xymatrix@!0@C=12mm@R=5mm{
R\ot R\ar[dr]_{\bar{s}}\ar@/^4mm/[rr]^{\bar{v}}\xtwocell[rr]{}<>{^{\bar{\alpha}}}&&T\ot T\\
&S\ot S\ar[ur]_{\bar{t}}&
}}}
\vrule\ \eqref{ax:OM4},\eqref{ax:OM5}
\right\rbrace\\
\cong
\left\lbrace
\vcenter{\hbox{\xymatrix@!0@=14mm{
**[l]TS\ot SR\ot R\ar[r]^-{\widehat{t}11}\ar[d]_-{11\widehat{s}}\xtwocell[rd]{}<>{^\widehat{\alpha}}&TR\ot R\ar[d]^-{\widehat{v}}\\
TS\ot S\ar[r]_-{\widehat{t}}&T
}}}
\vrule\ \eqref{ax:A1},\eqref{ax:A2},\eqref{ax:A3}
\right\rbrace
\end{multline*}
\end{claim}
\begin{proof}[Sketch]



Let $\bar{\alpha}$ be a cell as in the statement of the claim, then axiom~\eqref{ax:OM4} for $\bar{\alpha}$ implies that $P(\bar{\alpha})$ satisfies
axiom~\eqref{ax:A1}, axiom~\eqref{ax:OM4} for $\bar{\alpha}$ implies axiom~\eqref{ax:A2} for $P(\bar{\alpha})$, and axiom~\eqref{ax:OM5} for $\bar{\alpha}$ implies axiom~\eqref{ax:A3} for $P(\bar{\alpha})$, thus $P$ is well defined. Let $P'$ be the function given as follows.
\[
\vcenter{\hbox{\xymatrix@!0@=15mm{
&T\ot TR\ot R\ar[rd]|-{11n11}\ar@/^9mm/[rr]^-{1}\xtwocell[rr]{!<4mm,0mm>}<>{^1\widehat{t}^011\quad\ }&&T\ot TR\ot R\ar[rd]^-{1\widehat{v}}&\\
R\ot R\ar[ru]^-{n11}\ar[rd]_-{n11}&&T\ot TS\ot SR\ot R\ar[ru]|-{1\widehat{t}11}\ar[rd]|-{111\widehat{s}}\xtwocell[rr]{!<3mm,0mm>}<>{^1\widehat{\alpha}\ }&&T\ot T\\
&S\ot SR\ot R\ar[ru]|-{n1111}\ar[rd]_-{1\widehat{s}}\ar@{}[uu]|*[@]{\cong}&&T\ot TS\ot S\ar[ru]_-{1\widehat{t}}&\\
&&S\ot S\ar[ru]_-{n11}\ar@{}[uu]|*[@]{\cong}&&
}}}
\vcenter{\hbox{\xymatrix{
&\ar@{|->}[l]_-{P'}
}}}
\vcenter{\hbox{\xymatrix@!0@=15mm{
**[l]TS\ot SR\ot R\ar[r]^-{\widehat{t}11}\ar[d]_-{11\widehat{s}}\xtwocell[rd]{}<>{^\widehat{\alpha}}&TR\ot R\ar[d]^-{\widehat{v}}\\
TS\ot S\ar[r]_-{\widehat{t}}&T
}}}
\]
To show that $P'$ is well defined let $\widehat{\alpha}$ be a cell in the domain of $P'$, then $P'(\widehat{\alpha})$ satisfies axiom~\eqref{ax:OM4} as a consequence of \eqref{ax:A2} for $\widehat{\alpha}$, and axiom~\eqref{ax:OM5} as a consequence of \eqref{ax:A3} for $\widehat{\alpha}$. Finally, the assignations $P$ and $P'$ are inverse to each other; let $\bar{\alpha}$ in the domain of $P$ and consider the calculation below.
\[
\vcenter{\hbox{\xymatrix@!0@=8mm{
&&&&&&&&T\ot TR\ot R\ar[rrdd]^-{11\bar{v}}&&&&\\
&&&&&&&&&&&&\\
&&T\ot TR\ot R\ar[rrdd]|-{11n11}\ar@/^5mm/[rrrr]^-{11n11}\ar@/^9mm/[rrrrrruu]^-{1}\xtwocell[rrrr]{!<5mm,0mm>}<>{^<1>11\bar{t}^011\qquad}&&&&T\ot TT\ot TR\ot R\ar[rruu]|-{1e111}\ar[rrdd]|-{1111\bar{v}}\ar@{}[uu]|>>>*=0[@]{\cong}&&&&T\ot TT\ot T\ar[rrdd]^-{1e1}&&\\
&&&&&&&&&&&&\\
R\ot R\ar[rruu]^-{n11}\ar[rrdd]|-{n11}\ar[rddd]_-{\bar{s}}&&&&T\ot TS\ot SR\ot R\ar[rruu]|-{11\bar{t}11}\ar[rrdd]|-{1111\bar{s}}\xtwocell[rrrr]{!<5mm,0mm>}<>{^11\bar{t}\bar{\alpha}\quad}&&&&TT\ot T\ot TT\ot T\ar[rruu]|-{1e111}\ar[rrdd]|-{111e1}\ar@{}[uuuu]|*=0[@]{\cong}&&&&T\ot T\\
&&&&&&&&&&&&\\
&&S\ot SR\ot R\ar[rruu]|-{n1111}\ar[rrdd]|-{11\bar{s}}\ar@{}[uuuu]|*[@]{\cong}&&&&T\ot TS\ot SS\ot S\ar[rrdd]|-{111e1}\ar[rruu]|-{11\bar{t}\bar{t}}\xtwocell[rrrr]{!<5mm,0mm>}<>{^11\bar{t}^2\quad}&&&&T\ot TT\ot T\ar[rruu]_-{1e1}\ar@{}[uuuu]|*=0[@]{\cong}&&\\
&S\ot S\ar[rrrd]_-{n11}\ar@/_5mm/[rrrrrddd]_-{1}\ar@{}[ru]|*=0[@]{\cong}&&&&&&&&&&\ar@{}[lu]|*=0[@]{\cong}&\\
&&&&R\ot RR\ot R\ar[rruu]|-{n1111}\ar[rrdd]_-{1e1}\ar@{}[uuuu]|*=0[@]{\cong}&&&&T\ot TS\ot S\ar[rruu]|-{11\bar{t}}&&&&\\
&&&\ar@{}[ru]|*=0[@]{\cong}&&&&&&T\ot T\ar[ruuu]_-{n11}\ar@/_5mm/[rrruuuuu]_-{1}\ar@{}[lu]|*=0[@]{\cong}&&&\\
&&&&&&S\ot S\ar[rruu]|-{n11}\ar@{}[uuuu]|*[@]{\cong}\ar[rrru]_-{\bar{t}}&&&
}}}
\]
\[
=
\vcenter{\hbox{\xymatrix@!0@=8mm{
&&&&&&&&T\ot TR\ot R\ar[rrdd]^-{11\bar{v}}&&&&\\
&&&&&&&&&&&&\\
&&T\ot TR\ot R\ar[rrdd]|-{11n11}\ar@/^5mm/[rrrr]^-{11n11}\ar[rddd]_-{11\bar{s}}\ar@/^9mm/[rrrrrruu]^-{1}\xtwocell[rrrr]{!<5mm,0mm>}<>{^<1>11\bar{t}^011\qquad}&&&&T\ot TT\ot TR\ot R\ar[rruu]|-{1e111}\ar[rrdd]|-{1111\bar{v}}\ar@{}[uu]|>>>*=0[@]{\cong}&&&&T\ot TT\ot T\ar[rrdd]^-{1e1}&&\\
&&&&&&&&&&&&\\
R\ot R\ar[rruu]^-{n11}\ar[rddd]_-{\bar{s}}&&&&T\ot TS\ot SR\ot R\ar[rruu]|-{11\bar{t}11}\ar[rrdd]|-{1111\bar{s}}\xtwocell[rrrr]{!<5mm,0mm>}<>{^11\bar{t}\bar{\alpha}\quad}&&&&TT\ot T\ot TT\ot T\ar[rruu]|-{1e111}\ar[rrdd]|-{111e1}\ar@{}[uuuu]|*=0[@]{\cong}&&&&T\ot T\\
&&&T\ot TS\ot S\ar[rrrd]_-{11n11}\ar@/_5mm/[rrrrrddd]_-{1}\ar@{}[ru]|*=0[@]{\cong}&&&&&&&&&\\
&&&&&&T\ot TS\ot SS\ot S\ar[rrdd]|-{111e1}\ar[rruu]|-{11\bar{t}\bar{t}}\xtwocell[rrrr]{!<5mm,0mm>}<>{^11\bar{t}^2\quad}&&&&T\ot TT\ot T\ar[rruu]_-{1e1}\ar@{}[uuuu]|*=0[@]{\cong}&&\\
&S\ot S\ar@/_5mm/[rrrrrddd]_-{1}\ar[rruu]|-{n11}\ar@{}[ruuuuu]|*=0[@]{\cong}&&&&\ar@{}[ru]|*=0[@]{\cong}&&&&&&\ar@{}[lu]|*=0[@]{\cong}&\\
&&&&&&&&T\ot TS\ot S\ar[rruu]|-{11\bar{t}}&&&&\\
&&&&&&&&&T\ot T\ar[ruuu]_-{n11}\ar@/_5mm/[rrruuuuu]_-{1}\ar@{}[lu]|*=0[@]{\cong}&&&\\
&&&&&&S\ot S\ar[rruu]|-{n11}\ar[rrru]_-{\bar{t}}&&&
}}}
\]
\[
=
\vcenter{\hbox{\xymatrix@!0@=8mm{
&&&&&&&&T\ot TR\ot R\ar[rrdd]^-{11\bar{v}}&&&&\\
&&&&&&&&&&&&\\
&&T\ot TR\ot R\ar@/^5mm/[rrrr]^-{11n11}\ar[rddd]_-{11\bar{s}}\ar[rrrd]^-{11\bar{v}}\ar@/^9mm/[rrrrrruu]^-{1}\xtwocell[rrrd]{!<4mm,-3mm>}<>{^<3>11\bar{\alpha}\ \ }&&&&T\ot TT\ot TR\ot R\ar[rruu]|-{1e111}\ar[rrdd]|-{1111\bar{v}}\ar@{}[uu]|>>>*=0[@]{\cong}&&&&T\ot TT\ot T\ar[rrdd]^-{1e1}&&\\
&&&&&T\ot TT\ot T\ar[rrrd]_-{11n11}\ar@{}[ru]|*=0[@]{\cong}&&&&&&&\\
R\ot R\ar[rruu]^-{n11}\ar[rddd]_-{\bar{s}}&&&&&&&&TT\ot T\ot TT\ot T\ar[rruu]|-{1e111}\ar[rrdd]|-{111e1}\ar@{}[uuuu]|*=0[@]{\cong}&&&&T\ot T\\
&&&T\ot TS\ot S\ar[rrrd]|-{11n11}\ar[rruu]|-{11\bar{t}}\ar@/_5mm/[rrrrrddd]_-{1}\xtwocell[rrrrru]{!<5mm,0mm>}<>{^11\bar{t}^011\qquad}&&&&&&&&&\\
&&&&&&T\ot TS\ot SS\ot S\ar[rrdd]|-{111e1}\ar[rruu]|-{11\bar{t}\bar{t}}\xtwocell[rrrr]{!<5mm,0mm>}<>{^11\bar{t}^2\quad}&&&&T\ot TT\ot T\ar[rruu]_-{1e1}\ar@{}[uuuu]|*=0[@]{\cong}&&\\
&S\ot S\ar@/_5mm/[rrrrrddd]_-{1}\ar[rruu]|-{n11}\ar@{}[ruuuuu]|*=0[@]{\cong}&&&&\ar@{}[ru]|*=0[@]{\cong}&&&&&&\ar@{}[lu]|*=0[@]{\cong}&\\
&&&&&&&&T\ot TS\ot S\ar[rruu]|-{11\bar{t}}&&&&\\
&&&&&&&&&T\ot T\ar[ruuu]_-{n11}\ar@/_5mm/[rrruuuuu]_-{1}\ar@{}[lu]|*=0[@]{\cong}&&&\\
&&&&&&S\ot S\ar[rruu]|-{n11}\ar[rrru]_-{\bar{t}}&&&
}}}
\]
\[
=
\vcenter{\hbox{\xymatrix@!0@=8mm{
&&&&&&&&T\ot TR\ot R\ar[rrdd]^-{11\bar{v}}&&&&\\
&&&&&&&&&&&&\\
&&T\ot TR\ot R\ar[rddd]_-{11\bar{s}}\ar[rrrd]^-{11\bar{v}}\ar@/^9mm/[rrrrrruu]^-{1}\xtwocell[rrrd]{!<4mm,-3mm>}<>{^<3>11\bar{\alpha}\ \ }&&&&&&&&T\ot TT\ot T\ar[rrdd]^-{1e1}&&\\
&&&&&T\ot TT\ot T\ar[rrrd]_-{11n11}\ar@/_5mm/[rrrrrddd]_-{1}\ar@/^5mm/[rrrrru]^-{1}&&&&&&&\\
R\ot R\ar[rruu]^-{n11}\ar[rddd]_-{\bar{s}}&&&&&&&&TT\ot T\ot TT\ot T\ar[rruu]|-{1e111}\ar[rrdd]|-{111e1}\ar@{}[luu]|*=0[@]{\cong}&&&&T\ot T\\
&&&T\ot TS\ot S\ar[rruu]|-{11\bar{t}}\ar@/_5mm/[rrrrrddd]_-{1}&&&&\ar@{}[ru]|*=0[@]{\cong}&&&&&\\
&&&&&&&&&&T\ot TT\ot T\ar[rruu]_-{1e1}\ar@{}[uuuu]|*=0[@]{\cong}&&\\
&S\ot S\ar@/_5mm/[rrrrrddd]_-{1}\ar[rruu]|-{n11}\ar@{}[ruuuuu]|*=0[@]{\cong}&&&&&&&&&&\ar@{}[lu]|*=0[@]{\cong}&\\
&&&&&&&&T\ot TS\ot S\ar[rruu]|-{11\bar{t}}&&&&\\
&&&&&&&&&T\ot T\ar[ruuu]_-{n11}\ar@/_5mm/[rrruuuuu]_-{1}\ar@{}[lu]|*=0[@]{\cong}&&&\\
&&&&&&S\ot S\ar[rruu]|-{n11}\ar[rrru]_-{\bar{t}}&&&
}}}
\]
\[
=
\vcenter{\hbox{\xymatrix@!0@=8mm{
&&&&&&&T\ot T\\
&&T\ot TR\ot R\ar[rddd]_-{11\bar{s}}\ar[rrrd]^-{11\bar{v}}\xtwocell[rrrd]{!<4mm,-3mm>}<>{^<3>11\bar{\alpha}\ \ }&&&&&\\
&&&&&T\ot TT\ot T\ar[rruu]^-{1e1}&&\\
R\ot R\ar[rruu]^-{n11}\ar[rddd]_-{\bar{s}}&&&&&&\ar@{}[lu]|*=0[@]{\cong}&\\
&&&T\ot TS\ot S\ar[rruu]|-{11\bar{t}}&&&&\\
&&&&T\ot T\ar[ruuu]|-{n11}\ar@/_5mm/[rrruuuuu]_-{1}\ar@{}[lu]|*=0[@]{\cong}&&&\\
&S\ot S\ar[rruu]|-{n11}\ar[rrru]_-{\bar{t}}\ar@{}[ruuuuu]|*=0[@]{\cong}&&&&&&
}}}
\!\!\!\!\!\!\!\!\!\!=\!\!\!\!
\vcenter{\hbox{\xymatrix@!0@C=13mm@R=9mm{
&T\ot TR\ot R\ar@/^4mm/[rr]^-{11\bar{v}}&&T\ot TT\ot T\ar[rdd]^-{1e1}&\\
&&&&\\
R\ot R\ar[rd]_{\bar{s}}\ar@/^4mm/[rr]^{\bar{v}}\ar[ruu]^-{n11}\xtwocell[rr]{}<>{^<1>{\bar{\alpha}}}&&T\ot T\ar[ruu]|-{n11}\ar[rr]_-{1}\ar@{}[luu]!<5mm,0mm>|>>>>>*=0[@]{\cong}&\ar@{}[u]|>*=0[@]{\cong}&T\ot T\\
&S\ot S\ar[ur]_{\bar{t}}&&&
}}}
\]
This proves for $\bar{s}$, $\bar{t}$, and $\bar{v}$ that the triple of \emph{fixed} isomorphisms of the following kind,
\[
\vcenter{\hbox{\xymatrix@!0@C=12mm@R=12mm{
&S\ot SR\ot R\ar[rr]^-{11\bar{s}}&&S\ot SS\ot S\ar[rd]^-{1e1}&\\
R\ot R\ar[rr]_{\bar{s}}\ar[ru]^-{n11}&&S\ot S\ar[ru]|-{n11}\ar[rr]_-{1}\ar@{}[lu]|*=0[@]{\cong}&\ar@{}[u]|*=0[@]{\cong}&S\ot S
}}}
\]
satisfies \eqref{ax:OM6}, creating a isomorphism of opmonoidal 2-simplices between $P'P\bar{\alpha}$ and $\bar{\alpha}$ which does not depend on the cell $\bar{\alpha}$ but only on its source and target. Now, let $\widehat{\alpha}$ in the domain of $P'$, then $PP'(\widehat{\alpha})=\widehat{\alpha}$ follows by axiom~\eqref{ax:A1}.
\end{proof}

To continue with the proof of the theorem, fix two 2-simplices $(\bar{s},\bar{t},\bar{v},\bar{\alpha})$ and $(\bar{s}',\bar{t}',\bar{v}',\bar{\alpha}')$ in $\Opmon(R,S,T)$, and let $(\widehat{s},\widehat{t},\widehat{v},\widehat{\alpha})$ and $(\widehat{s}',\widehat{t}',\widehat{v}',\widehat{\alpha}')$ be their corresponding objects in $\mathcal{A}(R;S;T)$ under the equivalence \eqref{eq:P1} above and the isomorphism $P$ of the claim.

\begin{claim}
There is an isomorphism between the following hom sets.
\begin{multline*}
\Opmon(R\ot R,S\ot S,T\ot T)((\bar{s},\bar{t},\bar{v},\bar{\alpha}),(\bar{s}',\bar{t}',\bar{v}',\bar{\alpha}'))\\
\cong\mathcal{A}(R;S;T)((\widehat{s},\widehat{t},\widehat{v},\widehat{\alpha}),(\widehat{s}',\widehat{t}',\widehat{v}',\widehat{\alpha}'))
\end{multline*}
\end{claim}
\begin{proof}

The only thing to verify is that the isomorphism on hom sets induced by \eqref{eq:P1} restricts to the isomorphism in this claim. Take an arrow of opmonoidal 2-simplices $\xymatrix@1@C=5mm{(\bar{\sigma},\bar{\tau},\bar{\nu}):(\bar{s},\bar{t},\bar{v},\bar{\alpha})\ar[r]&(\bar{s}',\bar{t}',\bar{v}',\bar{\alpha}')}$, then its transpose $(P\bar{\sigma},P\bar{\tau},P\bar{\nu})$ satisfies axiom~\eqref{ax:A4} as a consequence of axiom~\eqref{ax:OM4} for $\bar{\tau}$ and axiom~\eqref{ax:OM6} for $(\bar{\sigma},\bar{\tau},\bar{\nu})$. Now, let $\xymatrix@1@C=5mm{(\widehat{\sigma},\widehat{\tau},\widehat{\nu}):(\widehat{s},\widehat{t},\widehat{v},\widehat{\alpha})\ar[r]&(\widehat{s}',\widehat{t}',\widehat{v}',\widehat{\alpha}')}$ be an arrow in $\mathcal{A}(R;S;T)$ then its transpose satisfies axiom~\eqref{ax:OM6} as a consequence of axiom~\eqref{ax:OLA5} for $\widehat{\tau}$ axiom~\eqref{ax:A4} for $(\widehat{\sigma},\widehat{\tau},\widehat{\nu})$.
\end{proof}

Thus with both claims we conclude the proof of the theorem.
\end{proof}

Even though we call this the ``transposition step'', it is no longer mere transposition via the universal property of bidualities between the cells $\bar{\alpha}$ and cells $\widehat{\alpha}$ as in the case of 1-simplices. Perhaps one shall call this process ``2-simplex transposition along $S\dashv S\ot\ $ and $T\dashv T\ot\ $'' since this proof does not depend on the enveloping monoidal structure of $R\ot R$. For a fully general version of Theorem~\ref{teo:Opmon2simplexTransposition}, one might replace this enveloping monoidale for an arbitrary right skew monoidale $M$.

Now, we perform the opmonadicity step.

\begin{teo}\label{teo:2SimplexOpmonadicOplaxAct}
Let $\mathcal{M}$ be an opmonadic-friendly monoidal bicategory. For every three bidualities $R\dashv R\ot$, $S\dashv S\ot$ and $T\dashv T\ot$, and every two opmonadic adjunctions,
\[
\vcenter{\hbox{\xymatrix{
R\ot\dtwocell_{i\ob}^{i\ot}{'\dashv}\\
I
}}}
\qquad
\vcenter{\hbox{\xymatrix{
S\ot\dtwocell_{j\ob}^{j\ot}{'\dashv}\\
I
}}}
\]
there is an equivalence of categories,
\[
\mathcal{A}(R;S;T)\simeq\OplaxAct(R;S;T)
\]
given by precomposition with the opmonadic left adjoint $1j\ob 1i\ob 1$.
\[
\vcenter{\hbox{\xymatrix@!0@=15mm{
**[l]TS\ot SR\ot R\ar[r]^-{\widehat{t}11}\ar[d]_-{11\widehat{s}}\xtwocell[rd]{}<>{^\widehat{\alpha}}&TR\ot R\ar[d]^-{\widehat{v}}\\
TS\ot S\ar[r]_-{\widehat{t}}&T
}}}
\vcenter{\hbox{\xymatrix{
\ar@{|->}[r]^-{Q}&
}}}
\vcenter{\hbox{\xymatrix@!0@=18mm{
TSR\ar[rd]|-{1j\ob 1i\ob 1}\ar[r]^-{1j\ob 11}\ar[d]_-{11i\ob 1}&TS\ot SR\ar[r]^-{\widehat{t}1}\ar[d]|-{111i\ob 1}&TR\ar[d]^-{1i\ob1}\\
TSR\ot R\ar[d]_-{1\widehat{s}}\ar@/_/[]!<0mm,-1mm>;[r]!<0mm,-1mm>_-{1j\ob 111}\ar@{}[ru]|>>>>>>*=0[@ru]{\cong}&TS\ot SR\ot R\ar[r]^-{\widehat{t}11}\ar[d]_-{11\widehat{s}}\ar@{}[ru]|*=0[@]{\cong}\xtwocell[rd]{}<>{^\widehat{\alpha}}&TR\ot R\ar[d]^-{\widehat{v}}\\
TS\ar[r]_-{1j\ob 1}\ar@{}[ru]|*=0[@]{\cong}&TS\ot S\ar[r]_-{\widehat{t}}&T
}}}
\]
\end{teo}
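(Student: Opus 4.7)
The plan is to lift the 1-simplex equivalence of Theorem~\ref{teo:Opmon_is_OplaxAct_Local} to the 2-simplex case. First, applying that theorem componentwise turns the three oplax actions $\widehat{s}$, $\widehat{t}$, $\widehat{v}$ with respect to the enveloping monoidales into oplax actions $s$, $t$, $v$ with respect to the induced skew monoidal structures on $R$, $S$, $T$, via precomposition with the opmonadic left adjoints $1i\ob 1$ and $1j\ob 1$. Since these are equivalences, once I have fixed isomorphisms witnessing $\widehat{s}\circ(1i\ob1)\cong s$, $\widehat{t}\circ(1j\ob1)\cong t$, and $\widehat{v}\circ(1i\ob1)\cong v$, the problem reduces to showing that the functor $Q$, which is precomposition with the long opmonadic left adjoint $1j\ob 1 i\ob 1$ on the cell $\widehat{\alpha}$, gives a bijection between the set of cells $\widehat{\alpha}$ with the prescribed boundary satisfying \eqref{ax:A1}, \eqref{ax:A2}, \eqref{ax:A3}, and the set of cells $\alpha$ with the matching boundary satisfying \eqref{ax:2SIM1}, \eqref{ax:2SIM2}, \eqref{ax:2SIM3}.

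The forward direction of the cell bijection is direct whiskering: the three axioms for $\widehat{\alpha}$ each descend to the corresponding axiom for $\alpha=Q(\widehat{\alpha})$ by precomposing with $1j\ob 1 i\ob 1$ (or the appropriate variant with an extra $R$ or $S$ on the left) and absorbing the structural isomorphisms relating $\widehat{t}^2,\widehat{v}^2,\widehat{t}^0,\widehat{v}^0,\widehat{s}^2$ to $t^2,v^2,t^0,v^0,s^2$. Conversely, given $\alpha$ satisfying the three 2-simplex axioms, I invoke the universal property of opmonadicity: the composite $1j\ob 1 i\ob 1$ is opmonadic because tensoring with $T$ (respectively $TS\ot S$) preserves the opmonadicity of $1j\ob 1$ and $1i\ob 1$ by hypothesis, and opmonadic adjunctions compose. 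Faithfulness of precomposition with any opmonadic left adjoint is automatic (they are co-well-powered epimorphisms in the local hom categories), giving uniqueness; for existence, $\alpha$ lifts to a cell $\widehat{\alpha}$ once we check that the pair of composites it sits between descends through the coequaliser presentation of the Kleisli-type object, which is exactly the content of the opmonadic-friendly hypothesis (composition preserves reflexive coequalisers in the hom categories).

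For the hom-set bijection on morphisms, the 1-simplex equivalences already give bijections on $\widehat{\sigma}$, $\widehat{\tau}$, $\widehat{\nu}$ with their counterparts $\sigma$, $\tau$, $\nu$, so it only remains to see that axiom~\eqref{ax:A4} for the tuple $(\widehat{\sigma},\widehat{\tau},\widehat{\nu})$ corresponds to axiom~\eqref{ax:2SIM4} for $(\sigma,\tau,\nu)$; the forward implication is whiskering, while the reverse uses the faithfulness half of opmonadicity again. The main obstacle is exactly the existence half of the lifting in the second paragraph: one must show that a cell $\alpha$ between arrows with domain $TSR$ that is built from the data $(s,t,v)$ factors uniquely through the opmonadic left adjoint into a cell $\widehat{\alpha}$ with domain $TS\ot SR\ot R$. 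This reduces, by the two-variable opmonadicity obtained from the conditions on $\mathcal{M}$, to verifying that $\alpha$ respects the pair of parallel arrows whose coequaliser presents $TS\ot SR\ot R$ as a Kleisli object over $TSR$. This verification is where the three axioms \eqref{ax:2SIM1}--\eqref{ax:2SIM3} for $\alpha$ are used in full force, each supplying the compatibility with one of the coaction-like structure cells of the associated opmonads.
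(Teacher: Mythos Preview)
Your outline matches the paper's proof: reduce the boundary to the known 1-simplex equivalence, show $Q$ is well defined and faithful by whiskering, and obtain essential surjectivity by lifting $\alpha$ through opmonadicity. Two points of execution differ from your sketch. First, rather than arguing with the composite adjunction $1j\ob 1 i\ob 1$, the paper checks the module-morphism condition separately for the two adjunctions $11i\ob 1\dashv 11i\ot 1$ and $1j\ob 11\dashv 1j\ot 11$, after writing down the explicit module structure $\psi$ on each of $s,t,v$ built from the oplax-action constraints; this avoids having to argue that opmonadic adjunctions compose and keeps the verifications tractable. Second, the bookkeeping is not ``each axiom supplies one compatibility'': the $i$-side module-morphism equation is proved using \eqref{ax:2SIM3} followed by \eqref{ax:2SIM2}, while the $j$-side equation uses \eqref{ax:2SIM1} alone. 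You also omit the check that the lifted $\widehat{\alpha}$ satisfies \eqref{ax:A1}--\eqref{ax:A3}; the paper handles this by precomposing both sides of each axiom with the appropriate opmonadic left adjoint, reducing to \eqref{ax:2SIM1}--\eqref{ax:2SIM3}, and invoking faithfulness.
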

\begin{proof}

The equivalence between the edges of the squares in the statement is determined by the equivalence below which obtained by three instances of the equivalence in \cite[Theorem~6.10]{Abud2018}.
\begin{multline}\label{eq:Q1}
\OplaxAct(R\ot R;S)\times\OplaxAct(S\ot S;T)\times\OplaxAct(R\ot R;T)\\
\simeq\OplaxAct(R;S)\times\OplaxAct(S;T)\times\OplaxAct(R;T)
\end{multline}
Now, $Q$ is well defined on objects because for a square $\widehat{\alpha}$, composing the appropriate left adjoint with each of the sides of axioms~\eqref{ax:A1}, \eqref{ax:A2}, and \eqref{ax:A3} turns them into the axioms~\eqref{ax:2SIM1}, \eqref{ax:2SIM2} and \eqref{ax:2SIM3}; for example, precomposing the arrow
\[
\vcenter{\hbox{\xymatrix{
1j\ob 1j\ob 1i\ob 1:TS\ot SS\ot SR\ot R\ar[r]&TSSR
}}}
\]
with both sides of axiom~\eqref{ax:A1} gives each of the sides of axiom~\eqref{ax:2SIM1}. Likewise, $Q$ is well defined on the arrows of $\mathcal{A}(R;S;T)$, because precomposing both sides of axiom~\eqref{ax:A4} with the arrow $\xymatrix@1@C=5mm{1j\ob 1i\ob 1:TS\ot SR\ot R\ar[r]&TSR}$ gives each of the sides of axiom~\eqref{ax:2SIM4}. Hence, $Q$ is a well defined functor, and because of the equivalence~\eqref{eq:Q1} above it is automatically faithful. To prove that $Q$ is essentially surjective on objects and full, first remember the core technicalities of the equivalence~\eqref{eq:Q1} regarding opmonadicity. An oplax action in any of the categories on the right hand side of \eqref{eq:Q1}, comes equipped with the structure of a module for a monad on induced by an opmonadic adjunction \cite[Theorem~6.10]{Abud2018}. This module structure is expressed in terms of the oplax action constraints, for example, take an oplax right action $\xymatrix@1@C=5mm{s:SR\ar[r]&S}$ with respect to the skew monoidale induced by an adjunction $i\dashv i^*$, then the cell $\psi$ below
\begin{equation}\label{eq:redundant_action2}
\vcenter{\hbox{\xymatrix@!0@C=16mm{
SR\ar[r]_-{1i\ob 1}\ar@/^9mm/[rrr]^-{s}&SR\ot R\ar[r]_-{1i\ot 1}\xtwocell[r]{}<>{^<-3>\psi}&SR\ar[r]_-{s}&S
}}}
=
\vcenter{\hbox{\xymatrix@!0@R=10mm@C=8mm{
&&&&SR\ar[rrdd]^-{s}&&\\
&&SRR\ar[rrd]|-{1i^*1}\ar[rru]^-{s1}&{\xtwocell[rrd]{}<>{^<-2>s^2\ }}&&\\
SR\ar[rr]_-{1i\ob 1}\ar@/^12mm/[rrrruu]^-{1}\ar[rru]^-{1i1}\xtwocell[rrrruu]{}<>{^<-4>s^01\quad}&&SR\ot R\ar[rr]_-{1i\ot 1}\ar@{}[u]|*[@]{\cong}&&SR\ar[rr]_-{s}&&S
}}}
\end{equation}
is a module structure for $s$, with respect to the monad induced by the adjunction $1i\ob 1\dashv 1i\ot 1$. And because this adjunction is opmonadic by hypothesis, there exists an oplax right action $\xymatrix@1@C=5mm{\widehat{s}:SR\ot R\ar[r]&S}$ with respect to the enveloping monoidale $R\ot R$, such that the precomposition with $\xymatrix@1@C=5mm{1i\ob 1:SR\ar[r]&SR\ot R}$ is isomorphic to $s$.

To prove that $Q$ is essentially surjective on objects let $\alpha$ be a 2-simplex in the category $\OplaxAct(R;S;T)$.
\[
\vcenter{\hbox{\xymatrix@!0@=15mm{
TSR\ar[r]^-{t1}\ar[d]_-{1s}\xtwocell[rd]{}<>{^\alpha}&TR\ar[d]^-{v}\\
TS\ar[r]_-{t}&T
}}}
\]
Now, the arrows below are opmonadic
\[
\xymatrix@1@C=5mm{1i1:SR\ar[r]&SR\ot R}
\qquad
\xymatrix@1@C=5mm{1j1:TS\ar[r]&TS\ot S}
\qquad
\xymatrix@1@C=5mm{1i1:TR\ar[r]&TR\ot R}
\]
and $s$, $t$, and $v$ come equipped with the module structures given by the formula~\eqref{eq:redundant_action2} (all of which, by a slight abuse of notation, will be called $\psi$). Hence, there are three induced oplax actions $\widehat{s}$, $\widehat{t}$, and $\widehat{v}$ with isomorphisms as below,
\begin{equation}
\label{iso:opmonadicity2}
\vcenter{\hbox{\xymatrix@!0@C=14mm{
SR\ar[rd]_-{1i\ob 1}\ar[rr]^-{s}&&S\\
&SR\ot R\ar[ru]_-{\widehat{s}}\ar@{}[u]|<<<*=0[@]{\cong}&
}}}
\quad\!
\vcenter{\hbox{\xymatrix@!0@C=14mm{
TS\ar[rd]_-{1j\ob 1}\ar[rr]^-{t}&&T\\
&TS\ot S\ar[ru]_-{\widehat{t}}\ar@{}[u]|<<<*=0[@]{\cong}&
}}}
\quad\!
\vcenter{\hbox{\xymatrix@!0@C=14mm{
TR\ar[rd]_-{1i\ob 1}\ar[rr]^-{v}&&T\\
&TR\ot R\ar[ru]_-{\widehat{v}}\ar@{}[u]|<<<*=0[@]{\cong}&
}}}
\end{equation}
All this structure turns the cell $\alpha$ into a morphism of modules for the monad induced by the opmonadic adjunction below.
\[
\vcenter{\hbox{\xymatrix{
TS\ot SR\ot R\dtwocell_{1j\ob 1i\ob 1\quad\quad}^{\quad\quad 1j\ot 1i\ot 1}{'\dashv}\\
TSR
}}}
\]
but instead of proving that directly, it is simpler and equivalent to describe module morphism structures for the monads induced by the two adjunctions below, which are also opmonadic by hypothesis.
\[
\vcenter{\hbox{\xymatrix{
TS\ot SR\dtwocell_{1j\ob 11\qquad}^{\qquad 1j\ot 11}{'\dashv}\\
TSR
}}}
\qquad
\vcenter{\hbox{\xymatrix{
TSR\ot R\dtwocell_{11i\ob 1\qquad}^{\qquad 11i\ot 1}{'\dashv}\\
TSR
}}}
\]
The structure of modules for the monads induced by these adjunctions on source and target of $\alpha$ is induced by the actions $\psi$ on $s$, $t$, and $v$. And the fact that $\alpha$ is a module morphism means precisely that the following two equations must be satisfied.
\begin{align}
\label{ax:IR}
\vcenter{\hbox{\xymatrix@!0@R=6mm@C=6mm{
&&&&&TR\ar[rrddddddd]^-{v}&&\\
&&TSR\ar[lddd]_-{11i\ob 1}\ar[rrru]^-{t1}\ar[rrddddddd]_-{1s}&&&&&\\
&&&&&&&\\
&&&&&&&\\
&TSR\ot R\ar[lddd]_-{11i\ot 1}&&{\xtwocell[rrrd]{}<>{^<-1>\alpha}}&&&&\\
&&&&&&&\\
&&&&&&&\\
TSR\ar[rrrrd]_-{1s}\xtwocell[rrrr]{}<>{^<-6>1\psi\ }&&&&&&&T\\
&&&&TS\ar[rrru]_-{t}&&&
}}}
&\!\!=\!\!
\vcenter{\hbox{\xymatrix@!0@R=6mm@C=6mm{
&&&&&TR\ar[rrddddddd]^-{v}\ar[lddd]_-{1i\ob 1}&&\\
&&TSR\ar[lddd]_-{11i\ob 1}\ar[rrru]^-{t1}&&&&&\\
&&&&&&&\\
&&&&TR\ot R\ar[lddd]_-{1i\ot 1}&&&\\
&TSR\ot R\ar[lddd]_-{11i\ot 1}\ar[rrru]^-{t11}\ar@{}[rrrruuuu]|*=0[@]{\cong}&&&&&&\\
&&&&&&&\\
&&&TR\ar[rrrrd]^-{v}\xtwocell[rrrr]{}<>{^<-6>\psi}&&&&\\
TSR\ar[rrrrd]_-{1s}\ar[rrru]^-{t1}\ar@{}[rrrruuuu]|*=0[@]{\cong}\xtwocell[rrrrrrr]{}<>{^\alpha}&&&&&&&T\\
&&&&TS\ar[rrru]_-{t}&&&\\
}}}
\\
\label{ax:JS}
\vcenter{\hbox{\xymatrix@!0@R=6mm@C=6mm{
&&&&TR\ar[rrrd]^-{v}&&&\\
TSR\ar[rrrru]^-{t1}\ar[rddd]_-{1j\ob 11}\xtwocell[rrrr]{}<>{^<7>\psi 1\ }&&&&&&&T\\
&&&&&&&\\
&&&&&&&\\
&TS\ot SR\ar[rddd]_-{1j\ot 11}&&{\xtwocell[rrru]{}<>{^<1>\alpha}}&&&&\\
&&&&&&&\\
&&&&&&&\\
&&TSR\ar[rrrd]_-{1s}\ar[rruuuuuuu]^-{t1}&&&&&\\
&&&&&TS\ar[rruuuuuuu]_-{t}&&
}}}
&\!\!=\!\!
\vcenter{\hbox{\xymatrix@!0@R=6mm@C=6mm{
&&&&TR\ar[rrrd]^-{v}&&&\\
TSR\ar[rrrru]^-{t1}\ar[rrrd]_-{1s}\ar[rddd]_-{1j\ob 11}\xtwocell[rrrrrrr]{}<>{^\alpha}&&&&&&&T\\
&&&TS\ar[rrrru]_-{t}\ar[rddd]_-{1j\ob 1}\xtwocell[rrrr]{}<>{^<7>\psi}&&&&\\
&&&&&&&\\
&TS\ot SR\ar[rrrd]_-{11s}\ar[rddd]_-{1j\ot 11}\ar@{}[rruu]|*[@]{\cong}&&&&&&\\
&&&&TS\ot S\ar[rddd]_-{1j\ot 1}&&&\\
&&&&&&&\\
&&TSR\ar[rrrd]_-{1s}\ar@{}[rruu]|*[@]{\cong}&&&&&\\
&&&&&TS\ar[rruuuuuuu]_-{t}&&
}}}
\end{align}
Condition~\eqref{ax:IR} is a consequence of the following calculation,
\[
\vcenter{\hbox{\xymatrix@!0@R=6.5mm@C=5mm{
&&&&&TR\ar[rrrddddddd]!<-.8mm>^-{1}&&&&\\
&&TSR\ar[ldddd]_-{11i\ob 1}\ar[dddddd]|-{11i1}\ar[rrru]^-{t1}\ar[rrrddddddd]!<-.8mm>^-{1}\xtwocell[rrrrdddddddddd]{}<>{^<3>1s^01\ }&&&&&&&\\
&&&&&&&&&\\
&&&&&&&&&\\
&&&&&&&&&\\
\ar@{}[rrrdd]|*=0[@]{\cong}&TSR\ot R\quad\ar[ldddd]_-{11i\ot 1}&&&&&&&&\\
&&&&&&&&&\\
&&TSRR\ar[lldd]|-{11i^*\!1}\ar[rrrd]_-{1s1}\xtwocell[rrrddd]{}<>{^<2>1s^2\ }&&&&&&TR\ar[]!<-.8mm>;[rddd]^-{v}&\\
&&&&&TSR\ar[]!<-.8mm>;[rddd]_-{1s}\ar[rrru]^-{t1}\xtwocell[rrrrdd]{}<>{^\alpha}&&&&\\
TSR\ar[rrrrrrdd]_-{1s}&&&&&&&&&\\
&&&&&&&&&T\\
&&&&&&TS\ar[rrru]_-{t}&&&
}}}
\!\!\!\!\!\!\stackrel{\eqref{ax:2SIM3}}{=}\!\!
\vcenter{\hbox{\xymatrix@!0@R=6.5mm@C=5mm{
&&&&&TR\ar[rrrddddddd]!<-.8mm>^-{1}\ar[dddddd]|-{1i1}\xtwocell[rrrrdddddddddd]{}<>{^<3>v^01\ \ }&&&&\\
&&TSR\ar[ldddd]_-{11i\ob 1}\ar[dddddd]|-{11i1}\ar[rrru]^-{t1}&&&&&&&\\
&&&&&&&&&\\
&&&&&&&&&\\
&&&&&&&&&\\
\ar@{}[rrrdd]|*=0[@]{\cong}&TSR\ot R\quad\ar[ldddd]_-{11i\ot 1}&&&&&&&&\\
&&&&&TRR\ar[rrrd]^-{v1}&&&&\\
&&TSRR\ar[lldd]|-{11i^*\!1}\ar[rrrd]_-{1s1}\ar[rrru]^-{t11}\ar@{}[rrruuuuuuu]|*=0[@]{\cong}\xtwocell[rrrrrr]{}<>{^\alpha 1\ }\xtwocell[rrrddd]{}<>{^<2>1s^2\ }&&&&&&TR\ar[]!<-.8mm>;[rddd]^-{v}&\\
&&&&&TSR\ar[]!<-.8mm>;[rddd]_-{1s}\ar[rrru]^-{t1}\xtwocell[rrrrdd]{}<>{^\alpha}&&&&\\
TSR\ar[rrrrrrdd]_-{1s}&&&&&&&&&\\
&&&&&&&&&T\\
&&&&&&TS\ar[rrru]_-{t}&&&
}}}
\]
\[
\!\!\!\!\!\!\stackrel{\eqref{ax:2SIM2}}{=}\!\!
\vcenter{\hbox{\xymatrix@!0@R=6.5mm@C=5mm{
&&&&&TR\ar[rrrddddddd]!<-.8mm>^-{1}\ar[dddddd]|-{1i1}\xtwocell[rrrrdddddddddd]{}<>{^<3>v^01\ \ }&&&&\\
&&TSR\ar[ldddd]_-{11i\ob 1}\ar[dddddd]|-{11i1}\ar[rrru]^-{t1}&&&&&&&\\
&&&&&&&&&\\
&&&&&&&&&\\
&&&&&&&&&\\
\ar@{}[rrrdd]|*=0[@]{\cong}&TSR\ot R\quad\ar[ldddd]_-{11i\ot 1}&&&&&&&&\\
&&&&&TRR\ar[rrrd]^-{v1}\ar[lldd]|-{1i^*\!1}\xtwocell[rrrddd]{}<>{^<2>v^2}&&&&\\
&&TSRR\ar[lldd]|-{11i^*\!1}\ar[rrru]^-{t11}\ar@{}[rrruuuuuuu]|*=0[@]{\cong}&&&&&&TR\ar[]!<-.8mm>;[rddd]^-{v}&\\
&&&TR\ar[rrrrrrdd]_-{v}&&&&&&\\
TSR\ar[rrrrrrdd]_-{1s}\ar[rrru]_-{t1}\ar@{}[rrrrruuu]|*=0[@]{\cong}\xtwocell[rrrrrrrrrd]{}<>{^\alpha}&&&&&&&&&\\
&&&&&&&&&T\\
&&&&&&TS\ar[rrru]_-{t}&&&
}}}
=
\vcenter{\hbox{\xymatrix@!0@R=6.5mm@C=5mm{
&&&&&TR\ar[rrrddddddd]!<-.8mm>^-{1}\ar[dddddd]|-{1i1}\ar[ldddd]_-{1i\ob 1}\xtwocell[rrrrdddddddddd]{}<>{^<3>v^01\ \ }&&&&\\
&&TSR\ar[ldddd]_-{11i\ob 1}\ar[rrru]^-{t1}&&&&&&&\\
&&&&&&&&&\\
&&&&&&&&&\\
&&&\ar@{}[rrrdd]|*=0[@]{\cong}&TR\ot R\ar[ldddd]_-{11i\ot 1}&&&&&\\
&TSR\ot R\ar[rrru]^-{t11}\ar[ldddd]_-{11i\ot 1}\ar@{}[rrrruuuuu]|*=0[@]{\cong}&&&&&&&&\\
&&&&&TRR\ar[rrrd]^-{v1}\ar[lldd]|-{1i^*\!1}\xtwocell[rrrddd]{}<>{^<2>v^2}&&&&\\
&&&&&&&&TR\ar[]!<-.8mm>;[rddd]^-{v}&\\
&&&TR\ar[rrrrrrdd]_-{v}&&&&&&\\
TSR\ar[rrrrrrdd]_-{1s}\ar[rrru]^-{t1}\ar@{}[rrrruuuuu]|*=0[@]{\cong}\xtwocell[rrrrrrrrrd]{}<>{^\alpha}&&&&&&&&&\\
&&&&&&&&&T\\
&&&&&&TS\ar[rrru]_-{t}&&&
}}}
\]
and equation~\eqref{ax:JS} may be proved in the following way.
\[
\vcenter{\hbox{\xymatrix@!0@R=6.5mm@C=5mm{
&&&&&&TR\ar[rrrd]^-{v}&&&\\
&&&TSR\ar[rrru]^-{t1}&&&&&&T\\
TSR\ar[rrru]^-{1}\ar[rdddd]_-{1j\ob 11}\ar[rrdd]|-{1j11}\xtwocell[rrru]{}<>{^<3>t^011\quad}&&&&&&&&&\\
&&&&&&&&&\\
&{\xtwocell[rrrrrru]{}<>{^<-1>t^21\ }}&TSSR\ar[dddddd]|-{1j^*\!11}\ar[ruuu]_-{t11}&&&&&&&\\
&&&&&&&&&\\
\ar@{}[rrruu]|*=0[@]{\cong}&TS\ot SR\ar[rdddd]_-{1j\ot 11}&&{\xtwocell[rrrrru]{}<>{^<1>\alpha}}&&&&&&\\
&&&&&&&&&\\
&&&&&&&&&\\
&&&&&&&&&\\
&&TSR\ar[rrrd]_-{1s}\ar[rrrruuuuuuuuuu]^-{t1}&&&&&&&\\
&&&&&TS\ar[rrrruuuuuuuuuu]_-{t}&&&&
}}}
\!\!\!\!\!\!\stackrel{\eqref{ax:2SIM1}}{=}\!\!
\vcenter{\hbox{\xymatrix@!0@R=6.5mm@C=5mm{
&&&&&&TR\ar[rrrd]^-{v}&&&\\
&&&TSR\ar[rrru]^-{t1}\ar[rrrd]_-{1s}\xtwocell[rrrrrr]{}<>{^\alpha}&&&&&&T\\
TSR\ar[rrru]^-{1}\ar[rdddd]_-{1j\ob 11}\ar[rrdd]|-{1j11}\xtwocell[rrru]{}<>{^<3>t^011\quad}&&&&&&TS\ar[rrru]_-{t}&&&\\
&&&&&&&&&\\
&&TSSR\ar[dddddd]|-{1j^*\!11}\ar[ruuu]_-{t11}\ar[rrrd]_-{11s}&&&&&&&\\
&&&&{\xtwocell[rrrrru]{!<3mm,0mm>}<>{^<-1>t^2\ }}&TSS\ar[ruuu]_-{t1}\ar[dddddd]|-{1j^*\!1}\ar@{}[lluuuu]|*=0[@]{\cong}&&&&\\
\ar@{}[rrruu]|*=0[@]{\cong}&TS\ot SR\ar[rdddd]_-{1j\ot 11}&&&&&&&&\\
&&&&&&&&&\\
&&&&&&&&&\\
&&&&&&&&&\\
&&TSR\ar[rrrd]_-{1s}\ar@{}[rrruuuuu]|*=0[@]{\cong}&&&&&&&\\
&&&&&TS\ar[rrrruuuuuuuuuu]_-{t}&&&&
}}}
\]
\[
=
\vcenter{\hbox{\xymatrix@!0@R=6.5mm@C=5mm{
&&&&&&TR\ar[rrrd]^-{v}&&&\\
&&&TSR\ar[rrru]^-{t1}\ar[rrrd]_-{1s}\xtwocell[rrrrrr]{}<>{^\alpha}&&&&&&T\\
TSR\ar[rrru]^-{1}\ar[rdddd]_-{1j\ob 11}\ar[rrrd]_-{1s}&&&&&&TS\ar[rrru]_-{t}&&&\\
&&&TS\ar[rrdd]|-{1j1}\ar[rrru]^-{1}\ar[rdddd]_-{1j\ob 1}\xtwocell[rrru]{}<>{^<3>t^01\quad}&&&&&&\\
&&&&&&&&&\\
&&&&{\xtwocell[rrrrru]{!<3mm,0mm>}<>{^<-1>t^2\ }}&TSS\ar[ruuu]_-{t1}\ar[dddddd]|-{1j^*\!1}&&&&\\
&TS\ot SR\ar[rdddd]_-{1j\ot 11}\ar[rrrd]_-{11s}\ar@{}[rruuu]|*=0[@]{\cong}&&&&&&&&\\
&&&\ar@{}[rrruu]|*=0[@]{\cong}&TS\ot S\ar[rdddd]_-{1j\ot 1}&&&&&\\
&&&&&&&&&\\
&&&&&&&&&\\
&&TSR\ar[rrrd]_-{1s}\ar@{}[rruuu]|*=0[@]{\cong}&&&&&&&\\
&&&&&TS\ar[rrrruuuuuuuuuu]_-{t}&&&&
}}}
\]
Therefore, by the opmonadicity of $1j\ob 1i\ob 1$, there exists a cell $\widehat{\alpha}$ such that the following equation is satisfied.
\begin{equation}
\label{eq:isos_are_morphisms}
\vcenter{\hbox{\xymatrix@!0@=8.5mm{
&TSR\ar[rrrr]^-{t1}\ar[dddd]^-{1s}\ar[ldd]_-{11i\ob 1}\xtwocell[rrrrdddd]{}<>{^\alpha}\ar@{{}{ }{}}@/^8mm/[rrrr]^-{\phantom{t1}}&&&&TR\ar[dddd]^-{v}\\&&&&&&\\
TSR\ot R\qquad\ar[rdd]_-{1\widehat{s}}\ar@{}[r]|>>>*=0[@]{\cong}&&&&&\\&&&&&&\\
&TS\ar[rrrr]^-{t}\ar[rrd]_-{1j\ob 1}\ar@{{}{ }{}}@/_8mm/[rrrr]_-{\phantom{t1}}&&&&T\\
&&&TS\ot S\ar[rru]_-{\widehat{t}}\ar@{}[u]|>>>*=0[@]{\cong}&&\\
}}}
\!\!\!\!\!\!\!\!=\!\!
\vcenter{\hbox{\xymatrix@!0@=17mm{
TSR\ar[rd]|-{1j\ob 1i\ob 1}\ar[r]^-{1j\ob 11}\ar[d]_-{11i\ob 1}\ar@/^8mm/[rr]^-{t1}&TS\ot SR\ar[r]^-{\widehat{t}1}\ar[d]|-{\quad111i\ob 1}\ar@{}[]!<0mm,10mm>|*=0[@]{\cong}&TR\ar[d]^-{1i\ob1}\ar@/^10mm/[dd]^-{v}\\
TSR\ot R\ar[d]_-{1\widehat{s}}\ar@/_/[]!<0mm,-1mm>;[r]!<0mm,-1mm>_-{1j\ob 111}\ar@{}[ru]|>>>>>>*=0[@ru]{\cong}&TS\ot SR\ot R\ar[r]^-{\widehat{t}11}\ar[d]_-{11\widehat{s}}\ar@{}[ru]|*=0[@]{\cong}\xtwocell[rd]{}<>{^\widehat{\alpha}}&TR\ot R\ar[d]^-{\widehat{v}}\ar@{}[]!<14mm,0mm>|*=0[@]{\cong}\\
TS\ar[r]_-{1j\ob 1}\ar@{}[ru]|*=0[@]{\cong}\ar@{{}{ }{}}@/_8mm/[rr]_-{\phantom{t1}}&TS\ot S\ar[r]_-{\widehat{t}}&T
}}}
\end{equation}
The cell $\widehat{\alpha}$ makes the quadruple $(\widehat{s},\widehat{t},\widehat{v},\widehat{\alpha})$ into an object of $\mathcal{A}(R;S;T)$, because precomposing both sides of axioms~\eqref{ax:A1}, \eqref{ax:A2}, and \eqref{ax:A3} for $\widehat{\alpha}$ with the appropriate opmonadic left adjoint gives each of the sides of axioms~\eqref{ax:2SIM1}, \eqref{ax:2SIM2} and \eqref{ax:2SIM3} for $\alpha$ which are pairwise equal. The three isomorphisms \eqref{iso:opmonadicity2} become an morphism in $\mathcal{A}(R;S;T)$ as condition~\eqref{eq:isos_are_morphisms} is the appropriate instance of axiom~\eqref{ax:A4}. Hence $Q(\widehat{s},\widehat{t},\widehat{v},\widehat{\alpha})\cong(s,t,v,\alpha)$ and so $Q$ is essentially surjective on objects. To verify that $Q$ is full take a morphism $\xymatrix@1@C=5mm{(\sigma,\tau,\nu):(s,t,v,\alpha)\ar[r]&(s',t',v',\alpha')}$ in $\OplaxAct(R;S;T)$, by the equivalence~\eqref{eq:Q1} there exist a cell $(\widehat{\sigma},\widehat{\tau},\widehat{\nu})$ which is induced by opmonadicity. Precomposing both sides of axiom~\eqref{ax:A4} for $(\widehat{\sigma},\widehat{\tau},\widehat{\nu})$ with the opmonadic left adjoint $1j\ob 1i\ob 1$ gives each of the sides of axiom~\eqref{ax:2SIM4} for $(\sigma,\tau,\nu)$ which are equal, therefore $Q$ is full and so an equivalence.
\end{proof}

Now we bring the equivalences in Theorems~\ref{teo:Opmon2simplexTransposition} and \ref{teo:2SimplexOpmonadicOplaxAct} together for any opmonadic-friendly monoidal bicategory, see Definition~\ref{def:OpmonadicFriendly}.

\begin{cor}
Let $\mathcal{M}$ be an opmonadic-friendly monoidal bicategory. For every three bidualities $R\dashv R\ot$, $S\dashv S\ot$, and $T\dashv T\ot$, and opmonadic adjunctions
\[
\vcenter{\hbox{\xymatrix{
R\ot\dtwocell_{i\ob}^{i\ot}{'\dashv}\\
I
}}}
\qquad
\vcenter{\hbox{\xymatrix{
S\ot\dtwocell_{j\ob}^{j\ot}{'\dashv}\\
I
}}}
\]
there is an equivalence of categories as shown.
\[
\Opmon(R\ot R,S\ot S,T\ot T)\simeq\OplaxAct(R;S;T)
\]
\end{cor}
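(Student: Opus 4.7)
The plan is to obtain the desired equivalence by composing the two equivalences just established. The corollary is essentially a one-line consequence of Theorems~\ref{teo:Opmon2simplexTransposition} and \ref{teo:2SimplexOpmonadicOplaxAct}: the first provides an equivalence $\Opmon(R\ot R,S\ot S,T\ot T)\simeq\mathcal{A}(R;S;T)$ via the transposition functor $P$ (which requires no hypothesis on $\mathcal{M}$ beyond the three bidualities), and the second provides $\mathcal{A}(R;S;T)\simeq\OplaxAct(R;S;T)$ via the precomposition functor $Q$ with the opmonadic left adjoint $1j\ob 1i\ob 1$ (which requires opmonadic-friendliness of $\mathcal{M}$ together with the opmonadicity of $i\ob\dashv i\ot$ and $j\ob\dashv j\ot$). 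Since equivalences of categories compose, we get the composite equivalence $Q\circ P:\Opmon(R\ot R,S\ot S,T\ot T)\simeq\OplaxAct(R;S;T)$.

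First I would check that all the hypotheses needed for the two theorems are already present in the statement of the corollary. The three bidualities appear verbatim, while the opmonadic-friendliness of $\mathcal{M}$ plus the two given opmonadic adjunctions $i\ob\dashv i\ot$ and $j\ob\dashv j\ot$ cover the hypotheses of Theorem~\ref{teo:2SimplexOpmonadicOplaxAct}. There is nothing further to verify on the oplax-action side for Theorem~\ref{teo:Opmon2simplexTransposition}.

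There is no real obstacle here; if anything, the only thing worth remarking is the explicit description of the composite $Q\circ P$. Unpacking the definitions of $P$ and $Q$, the composite sends an opmonoidal 2-simplex $(\bar{s},\bar{t},\bar{v},\bar{\alpha})$ to the 2-simplex of oplax actions whose 1-simplex edges are the oplax actions $s$, $t$, $v$ obtained by transposing $\bar{s}$, $\bar{t}$, $\bar{v}$ along the appropriate bidualities and then precomposing with the relevant opmonadic left adjoint $1i\ob 1$, as in \cite[Corollary~6.11]{Abud2018}; and whose filling is obtained from $\bar{\alpha}$ by first applying the rather elaborate formula for $P(\bar{\alpha})$ given in Theorem~\ref{teo:Opmon2simplexTransposition} and then precomposing with $1j\ob 1i\ob 1$ as in Theorem~\ref{teo:2SimplexOpmonadicOplaxAct}. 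This is the natural 2-simplex analogue of the two-step equivalence described in the introduction for 1-simplices. With this, the proof is complete: no further calculation is needed beyond invoking the two theorems and composing.
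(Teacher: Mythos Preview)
Your proposal is correct and matches the paper's own proof, which simply records the chain of equivalences $\Opmon(R\ot R,S\ot S,T\ot T)\simeq\mathcal{A}(R;S;T)\simeq\OplaxAct(R;S;T)$ obtained from Theorems~\ref{teo:Opmon2simplexTransposition} and~\ref{teo:2SimplexOpmonadicOplaxAct}. Your additional remarks on hypotheses and the explicit description of $Q\circ P$ are accurate but go beyond what the paper itself writes.
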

\begin{proof}
\[
\Opmon(R\ot R,S\ot S,T\ot T)\simeq\mathcal{A}(R;S;T)\simeq\OplaxAct(R;S;T)
\]
\end{proof}

\begin{cor}\label{cor:2simpEquiv}
Let $\mathcal{M}$ be an opmonadic-friendly monoidal bicategory such that every object $R$ has a chosen right bidual, and a chosen adjunction $i\dashv i^*$ whose opposite adjunction is opmonadic. There is an equivalence of categories of 2-simplices,
\[
\Opmon^{\mathrm{e}}_2\simeq\OplaxAct^{\mathrm{e}}_2
\]
which commutes with faces and degeneracies.
\[
\vcenter{\hbox{\xymatrix@!0@R=25mm@C=35mm{
\Opmon^{\mathrm{e}}_1\ar@<-3mm>[d]|-{\cs_0}\ar@<3mm>[d]|-{\cs_1}\ar[r]_-{\simeq}&\OplaxAct^{\mathrm{e}}_1\ar@<-3mm>[d]|-{\cs_0}\ar@<3mm>[d]|-{\cs_1}\\
\Opmon^{\mathrm{e}}_2\ar@<6mm>[u]|-{\partial_0}\ar[u]|-{\partial_1}\ar@<-6mm>[u]|-{\partial_2}\ar[r]^-{\simeq}&\OplaxAct^{\mathrm{e}}_2\ar@<6mm>[u]|-{\partial_0}\ar[u]|-{\partial_1}\ar@<-6mm>[u]|-{\partial_2}
}}}
\]
\end{cor}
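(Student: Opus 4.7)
The plan is to reduce the statement to the previous corollary applied componentwise. The category $\Opmon^{\mathrm e}_2$ decomposes, by the full subbicategory structure of $\Opmon^{\mathrm e}(\mathcal{M})$ together with the definition of the lax-2-nerve, as the coproduct in $\Cat$ over triples of objects $(R,S,T)$ of $\mathcal{M}$ (equipped with their chosen bidualities) of the fibre categories $\Opmon(R\ot R,S\ot S,T\ot T)$. By definition, $\OplaxAct^{\mathrm e}_2$ is the analogous coproduct over triples of 0-simplices built from the chosen adjunctions, of $\OplaxAct(R;S;T)$. Under the running hypotheses every object of $\mathcal{M}$ carries a chosen biduality and chosen opmonadic adjunction $i\ob\dashv i\ot$, so the two indexing sets are canonically in bijection. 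The previous corollary then supplies, fibre by fibre, an equivalence $\Opmon(R\ot R,S\ot S,T\ot T)\simeq\OplaxAct(R;S;T)$, and taking the coproduct yields the asserted equivalence $\Opmon^{\mathrm e}_2\simeq\OplaxAct^{\mathrm e}_2$.

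Next I would verify strict compatibility with the face functors. A 2-simplex of opmonoidal arrows has boundary $(\bar s,\bar t,\bar v)$, and inspection of the transposition step $P$ of Theorem~\ref{teo:Opmon2simplexTransposition} together with the opmonadicity step $Q$ of Theorem~\ref{teo:2SimplexOpmonadicOplaxAct} shows that each of these three boundary arrows is sent, independently, to the corresponding oplax action boundary under the equivalence of Theorem~\ref{teo:Opmon_is_OplaxAct_Local}. Since the face functors on both sides of the square simply project onto the respective edges $(\partial_2,\partial_0,\partial_1)=(\bar s,\bar t,\bar v)$ and $(s,t,v)$, and the 1-simplex equivalence fits into the square~\eqref{eq:FacesDegeneracies01}, the face squares commute strictly. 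The same projection argument handles the arrow (2-cell) components $(\bar\sigma,\bar\tau,\bar\nu)\mapsto(\sigma,\tau,\nu)$.

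Compatibility with degeneracies is the delicate part. The degenerate 2-simplex $\cs_1$ of an opmonoidal arrow $\bar t:S\ot S\to T\ot T$ has two copies of $\bar t$ on the sides and the identity opmonoidal arrow on $T\ot T$ on the long edge, with $\bar\alpha$ a canonical coherence isomorphism; transporting this along $P$ and $Q$ yields an oplax action on the long edge which is canonically isomorphic to, but not strictly equal to, the degenerate 1-simplex $i^*\!1=\cs_0(T)$ appearing in the degeneracy $\cs_1(t)$ on the oplax side. This is precisely the obstruction flagged in Remark~\ref{rem:degeneracies}. Consequently, what the equivalence directly supplies is a pseudosimplicial square whose degeneracy cell is a canonical coherent isomorphism. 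One then strictifies by redefining the degenerate simplices of $\OplaxAct^{\mathrm e}(\mathcal{M})$ along the chosen isomorphism (inserting the factor $1i\ob 1$ followed by $e1$ as in the Remark); the coherence conditions required for this replacement to yield a genuine simplicial morphism follow from the coherence axioms for the biduality $T\dashv T\ot$ and those for the monoidal bicategory $\mathcal{M}$.

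The main obstacle is therefore not the existence of the componentwise equivalence, which is handed to us by the previous corollary, but the bookkeeping needed to show that the equivalences at $n=1$ and $n=2$ assemble into a (pseudo, hence strictifiable) simplicial morphism. Concretely, one must check that the isomorphisms~\eqref{iso:transposition} and \eqref{iso:opmonadicity2} used to fix the transposition and the opmonadic lifts may be chosen naturally in the data, so that the faces are sent strictly on the nose and the degeneracies are sent up to the canonical coherence cells that get absorbed by the strictification of Remark~\ref{rem:degeneracies}. Once those naturality choices are pinned down, the commuting squares of functors in the statement are immediate.
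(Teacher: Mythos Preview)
Your proposal is correct and follows essentially the same approach as the paper: both obtain the equivalence by assembling the fibrewise equivalences of the preceding corollary over the coproduct decomposition indexed by triples of objects, observe that the face squares commute strictly by construction, and handle the degeneracy squares by appealing to Remark~\ref{rem:degeneracies} for the strictification. Your write-up is simply more explicit about the coproduct decomposition and the bookkeeping of naturality choices than the paper's two-sentence proof, but there is no substantive difference in method.
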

\begin{proof}

The face maps commute strictly with the equivalences, since the equivalence of 2-simplices was defined precisely as such. For the degeneracies, the square commutes up to isomorphism. These isomorphisms may be strictified in a similar way as is already discussed in Remark~\ref{rem:degeneracies} by redefining the degeneracy functors.
\end{proof}

We now continue with the case of 3-simplices by introducing the categories that will form part of this process.

\begin{defi}
Given four bidualities $R\dashv R\ot$, $S\dashv S\ot$, $T\dashv T\ot$, and $U\dashv U\ot$ denote by $\Opmon(R\ot R,S\ot S,T\ot T,U\ot U)$ the category of 3-simplices of the lax-2-nerve of $\Opmon(\mathcal{M})$ with fixed 0-faces $R\ot R$, $S\ot S$, $T\ot T$ and $U\ot U$. It is a full subcategory of
\begin{multline*}
\Opmon(R\ot R,S\ot S,T\ot T)\times\Opmon(R\ot R,S\ot S,U\ot U)\\
\times\Opmon(R\ot R,T\ot T,U\ot U)\times\Opmon(S\ot S,T\ot T,U\ot U)
\end{multline*}
on the objects $(\bar{\alpha},\bar{\beta},\bar{\gamma},\bar{\zeta})$ satisfying the equation
\begin{equation}
\tag{OM7}\label{ax:OM7}
\vcenter{\hbox{\xymatrix@!0@C=8mm@R=10mm{
R\ot R\ar[dr]_-{\bar{s}}\ar@/^4mm/[rrrd]^-{\bar{v}}\ar@/^5mm/[rrrr]^-{\bar{w}}\xtwocell[rrrd]{}<>{^\bar{\alpha}}&&{\xtwocell[rr]{}<>{^\bar{\gamma}}}&&U\ot U\\
&S\ot S\ar[rr]_-{\bar{t}}&&T\ot T\ar[ru]_-{\bar{u}}&
}}}
=
\vcenter{\hbox{\xymatrix@!0@C=8mm@R=10mm{
R\ot R\ar[dr]_-{\bar{s}}\ar@/^5mm/[rrrr]^-{\bar{w}}\xtwocell[rrr]{}<>{^\bar{\beta}}&&&&U\ot U\\
&S\ot S\ar[rr]_-{\bar{t}}\ar@/^5mm/[rrru]^-{\bar{x}}\xtwocell[rrru]{}<>{^\bar{\zeta}}&&T\ot T\ar[ru]_-{\bar{u}}&
}}}
\end{equation}
\end{defi}
\begin{rem}
For the proofs below we conveniently rewrite condition~\eqref{ax:OM7}, since it makes our pasting diagram calculations easier to follow.
\begin{equation}
\tag{OM7'}\label{ax:OM7'}
\vcenter{\hbox{\xymatrix@!0@C=15mm{
&R\ot R\ar[rd]^-{1}&\\
R\ot R\ar[ru]^-{1}\ar[dd]_-{\bar{s}}\ar[rd]_-{1}\xtwocell[rddd]{}<>{^\bar{\alpha}}&&R\ot R\ar[dd]^-{\bar{w}}\\
&R\ot R\ar[dd]^-{\bar{v}}\ar[ru]^-{1}\xtwocell[rd]{}<>{^\bar{\gamma}}&\\
S\ot S\ar[rd]_-{\bar{t}}&&U\ot U\\
&T\ot T\ar[ru]_-{\bar{u}}&
}}}
=
\vcenter{\hbox{\xymatrix@!0@C=15mm{
&R\ot R\ar[rd]^-{1}\ar[dd]_-{\bar{s}}\xtwocell[rddd]{}<>{^\bar{\beta}}&\\
R\ot R\ar[ru]^-{1}\ar[dd]_-{\bar{s}}&&R\ot R\ar[dd]^-{\bar{w}}\\
&S\ot S\ar[rd]^-{\bar{x}}&\\
S\ot S\ar[rd]_-{\bar{t}}\ar[ru]_-{1}\xtwocell[rr]{}<>{^\bar{\zeta}}&&U\ot U\\
&T\ot T\ar[ru]_-{\bar{u}}&
}}}
\end{equation}
\end{rem}
\begin{defi}
For three enveloping monoidales $R\ot R$, $S\ot S$, and $T\ot T$ induced by bidualities $R\dashv R\ot$, $S\dashv S\ot$, and $T\dashv T\ot$, and an object $U$ in $\mathcal{M}$, define the category $\mathcal{A}(R;S;T;U)$ as the full subcategory of quadruples $(\widehat{\alpha},\widehat{\beta},\widehat{\gamma},\widehat{\zeta})$ in
\[
\mathcal{A}(R;S;T)\times\mathcal{A}(R;S;U)\times\mathcal{A}(R;T;U)\times\mathcal{A}(S;T;U)
\]
that satisfy the following condition,
\begin{equation}
\tag{A5}\label{ax:A5}
\vcenter{\hbox{\xymatrix@!0@C=19mm@R=10.5mm{
&US\ot SR\ot R\ar[rd]^-{\widehat{x}11}&\\
UT\ot TS\ot SR\ot R\qquad\ar[ru]^-{\widehat{u}1111}\ar[dd]_-{1111\widehat{s}}\ar[rd]_-{11\widehat{t}11}\xtwocell[rddd]{}<>{^11\widehat{\alpha}\quad}\xtwocell[rr]{}<>{^\widehat{\zeta}11\quad}&&UR\ot R\ar[dd]^-{\widehat{w}}\\
&UT\ot TR\ot R\ar[dd]^-{11\widehat{v}}\ar[ru]^-{\widehat{u}11}\xtwocell[rd]{}<>{^\widehat{\gamma}}&\\
UT\ot TS\ot S\ar[rd]_-{11\widehat{t}}&&U\\
&UT\ot T\ar[ru]_-{\widehat{u}}&
}}}
\quad=\hspace{-5mm}
\vcenter{\hbox{\xymatrix@!0@C=19mm@R=10.5mm{
&US\ot SR\ot R\ar[rd]^-{\widehat{x}11}\ar[dd]_-{11\widehat{s}}\xtwocell[rddd]{}<>{^\widehat{\beta}}&\\
UT\ot TS\ot SR\ot R\qquad\ar[ru]^-{\widehat{u}1111}\ar[dd]_-{1111\widehat{s}}\ar@{}[rd]|*=0[@]{\cong}&&UR\ot R\ar[dd]^-{\widehat{w}}\\
&US\ot S\ar[rd]^-{\widehat{x}}&\\
UT\ot TS\ot S\ar[rd]_-{11\widehat{t}}\ar[ru]_-{\widehat{u}11}\xtwocell[rr]{}<>{^\widehat{\zeta}}&&U\\
&UT\ot T\ar[ru]_-{\widehat{u}}&
}}}
\end{equation}
\end{defi}

We now proceed with the transposition step.

\begin{prop}\label{prop:Opmon3simplexTransposition}
For every four bidualities $R\ot R$, $S\ot S$, $T\ot T$ and $U\ot U$, there is an equivalence of categories
\[
\Opmon(R\ot R,S\ot S,T\ot T,U\ot U)\simeq\mathcal{A}(R;S;T;U)
\]
induced by the functor $P$ defined in Theorem~\ref{teo:Opmon2simplexTransposition} and given on a 3-simplex $(\bar{\alpha},\bar{\beta},\bar{\gamma},\bar{\zeta})$ by $(P\bar{\alpha},P\bar{\beta},P\bar{\gamma},P\bar{\zeta})$
\end{prop}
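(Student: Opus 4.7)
The plan is to leverage the equivalence $P$ from Theorem~\ref{teo:Opmon2simplexTransposition} applied componentwise. Since $\mathcal{A}(R;S;T;U)$ is a full subcategory of the fourfold product $\mathcal{A}(R;S;T)\times\mathcal{A}(R;S;U)\times\mathcal{A}(R;T;U)\times\mathcal{A}(S;T;U)$, and the domain $\Opmon(R\ot R,S\ot S,T\ot T,U\ot U)$ is likewise a full subcategory of the corresponding fourfold product of $\Opmon$-categories, the product
\[
P\times P\times P\times P\,:\,\Opmon(R\ot R,S\ot S,T\ot T)\times\cdots\;\simeq\;\mathcal{A}(R;S;T)\times\cdots
\]
is an equivalence by Theorem~\ref{teo:Opmon2simplexTransposition}. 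To restrict this to the two full subcategories, I need only verify that a quadruple $(\bar{\alpha},\bar{\beta},\bar{\gamma},\bar{\zeta})$ satisfies \eqref{ax:OM7'} if and only if $(P\bar{\alpha},P\bar{\beta},P\bar{\gamma},P\bar{\zeta})$ satisfies \eqref{ax:A5}; once this is established, the proposition follows by the standard fact that an equivalence restricts to any pair of full subcategories whose objects correspond under it.

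The main step is the forward implication: assume \eqref{ax:OM7'} and deduce \eqref{ax:A5}. I would perform a pasting-diagram computation where one starts with the left-hand side of \eqref{ax:A5}, expands each of $P\bar{\alpha}$, $P\bar{\gamma}$, and $P\bar{\zeta}$ using the explicit formula for $P$ given in Theorem~\ref{teo:Opmon2simplexTransposition} (which consists of whiskering by $n$'s and $e$'s and inserting the opmonoidal composition constraints $\bar{t}^2$, $\bar{u}^2$, etc.), and then use the interchange law, the snake equations for the chosen bidualities, and the axioms \eqref{ax:OM1}, \eqref{ax:OM2}, \eqref{ax:OM4} for the opmonoidal constraints to rearrange the pasting. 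At the crucial moment, one applies \eqref{ax:OM7'} to replace the $\bar{\alpha}$-$\bar{\gamma}$ composite by the $\bar{\beta}$-$\bar{\zeta}$ composite; the remaining simplifications mirror those used in the claim within the proof of Theorem~\ref{teo:Opmon2simplexTransposition}, and one obtains the right-hand side of \eqref{ax:A5}.

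The converse direction comes essentially for free from the bijection established in Theorem~\ref{teo:Opmon2simplexTransposition}: since $P$ induces a bijection of cells with given source and target, and both sides of \eqref{ax:OM7'} are 2-simplices of opmonoidal arrows with common source $\bar{s}$ and composite target $\bar{u}\bar{v}\circ\cdots$, the fact that their images under $P$ agree forces them to agree before applying $P$. Alternatively, one runs the forward pasting argument in reverse using the explicit formula for the pseudoinverse $P'$.

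The main obstacle is purely notational: the pasting diagram for \eqref{ax:A5} already involves seven composable regions, and expanding each of the three involved instances of $P$ inflates this to a large pasting that must be manipulated to match. The manipulation is routine, using only interchange and the standard axioms, and the structure closely parallels the calculation in the proof of Theorem~\ref{teo:Opmon2simplexTransposition}; the novelty here is bookkeeping rather than any fundamentally new idea. Hence no further hypothesis on $\mathcal{M}$ is required beyond the existence of the four bidualities.
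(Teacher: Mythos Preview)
Your proposal is correct and follows essentially the same route as the paper: apply the fourfold product of the equivalences from Theorem~\ref{teo:Opmon2simplexTransposition}, then check that the two full subcategories correspond by verifying that \eqref{ax:OM7'} and \eqref{ax:A5} are interchanged. The paper carries out both directions by explicit pasting computations; the forward one uses \eqref{ax:OM1}, \eqref{ax:OM4}, and \eqref{ax:OM7'} (not \eqref{ax:OM2}), and the converse applies the formula for $P'$ together with \eqref{ax:A3} and \eqref{ax:A5}.

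One caution about your first argument for the converse: the bijection established in Theorem~\ref{teo:Opmon2simplexTransposition} is between triangles with a \emph{fixed} triple of edges $(\bar{s},\bar{t},\bar{v})$ and squares with the fixed triple $(\widehat{s},\widehat{t},\widehat{v})$. The two sides of \eqref{ax:OM7'} are indeed opmonoidal cells $\bar{w}\Rightarrow\bar{u}\,\bar{t}\,\bar{s}$, but to feed them into that bijection you would have to regard $\bar{t}\bar{s}$ as a single edge; the resulting square on the $\mathcal{A}$ side then involves $\widehat{\bar{t}\bar{s}}$, not $\widehat{t}$ and $\widehat{s}$ separately, and relating this to \eqref{ax:A5} requires exactly the sort of computation you are trying to avoid. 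So the shortcut does not buy anything, and your alternative---running the explicit pasting with $P'$---is the correct move and is precisely what the paper does.
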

\begin{proof}

The categories in question are defined as full subcategories of 4-fold products of categories with the form $\Opmon(\_,\_,\_)$ and $\mathcal{A}(\_;\_;\_)$ respectively, and thus by Theorem~\ref{teo:Opmon2simplexTransposition} we have the equivalence of categories,
\begin{multline}\label{eq:P3}
\Opmon(R\ot R,S\ot S,T\ot T)\times\Opmon(R\ot R,S\ot S,U\ot U)\\
\times\Opmon(R\ot R,T\ot T,U\ot U)\times\Opmon(S\ot S,T\ot T,U\ot U)\\
\simeq\mathcal{A}(R;S;T)\times\mathcal{A}(R;S;U)\times\mathcal{A}(R;T;U)\times\mathcal{A}(S;T;U)
\end{multline}
We verify that for a quadruple $(\bar{\alpha},\bar{\beta},\bar{\gamma},\bar{\zeta})$ in $\Opmon(R\ot R,S\ot S,T\ot T,U\ot U)$ the quadruple $(P\bar{\alpha},P\bar{\beta},P\bar{\gamma},P\bar{\zeta})$ satisfies axiom~\eqref{ax:A5}.
{\footnotesize
\[
\vcenter{\hbox{\xymatrix@!0@C=15mm{
&&US\ot SR\ot R\ar[rd]^-{1\bar{x}11}&&\\
&UU\ot US\ot SR\ot R\ar[ru]^-{e11111}\ar[rd]|-{111\bar{x}11}&&UU\ot UR\ot R\ar[rd]^-{e111}&\\
UT\ot TS\ot SR\ot R\qquad\quad\ar[ru]^-{1\bar{u}1111}\ar[dd]_-{11111\bar{s}}\ar[rd]|-{111\bar{t}11}\xtwocell[rddd]{}<>{^111\bar{t}\bar{\alpha}\quad\ }\xtwocell[rr]{}<>{^1\bar{u}\bar{\zeta}11\quad\ \ }&&UU\ot UU\ot UR\ot R\ar[ru]|-{e11111}\ar[rd]|-{11e111}\ar@{}[uu]|*=0[@]{\cong}&&UR\ot R\ar[dd]^-{1\bar{w}}\\
&UT\ot TT\ot TR\ot R\ar[ru]|-{1\bar{u}\bar{u}11}\ar[rd]|-{11e111}\ar[dd]|-{11111\bar{v}}\xtwocell[rr]{}<>{^\qquad\qquad\ 1\bar{u}^211}&&UU\ot UR\ot R\ar[ru]|-{e111}\ar[dd]|-{111\bar{w}}\ar@{}[uu]|*=0[@]{\cong}&\\
UT\ot TS\ot SS\ot S\qquad\ar[rd]|-{111\bar{t}\bar{t}}\ar[dd]_-{1111e1}\xtwocell[rddd]{}<>{^111\bar{t}^2\quad}&&UT\ot TR\ot R\ar[dd]|-{111\bar{v}}\ar[ru]|-{1\bar{u}11}\xtwocell[rd]{}<>{^1\bar{u}\bar{\gamma}\quad}&&UU\ot U\ar[dd]^-{e1}\\
&UT\ot TT\ot TT\ot T\ar[rd]|-{11e111}\ar[dd]|-{1111e1}\ar@{}[ru]|*=0[@]{\cong}&&UU\ot UU\ot U\ar[ru]|-{e111}\ar[dd]|-{11e1}\ar@{}[ruuu]|*=0[@]{\cong}&\\
UT\ot TS\ot S\ar[rd]_-{111\bar{t}}&&UT\ot TT\ot T\ar[ru]|-{1\bar{u}\bar{u}}\ar[dd]|-{11e1}\xtwocell[rd]{}<>{^1\bar{u}^2\ }&&U\\
&UT\ot TT\ot T\ar[rd]_-{11e1}\ar@{}[ru]|*=0[@]{\cong}&&UU\ot U\ar[ru]_-{e1}\ar@{}[ruuu]|*=0[@]{\cong}&\\
&&UT\ot T\ar[ru]_-{1\bar{u}}&&
}}}
\]
\[
=
\vcenter{\hbox{\xymatrix@!0@C=15mm{
&&US\ot SR\ot R\ar[rd]^-{1\bar{x}11}&&\\
&UU\ot US\ot SR\ot R\ar[ru]^-{e11111}\ar[rd]|-{111\bar{x}11}&&UU\ot UR\ot R\ar[rd]^-{e111}&\\
UT\ot TS\ot SR\ot R\qquad\quad\ar[ru]^-{1\bar{u}1111}\ar[dd]_-{11111\bar{s}}\ar[rd]|-{111\bar{t}11}\xtwocell[rddd]{}<>{^111\bar{t}\bar{\alpha}\quad\ }\xtwocell[rr]{}<>{^1\bar{u}\bar{\zeta}11\quad\ \ }&&UU\ot UU\ot UR\ot R\ar[ru]|-{e11111}\ar[rd]|-{11e111}\ar[dd]|-{11111\bar{w}}\ar@{}[uu]|*=0[@]{\cong}&&UR\ot R\ar[dd]^-{1\bar{w}}\\
&UT\ot TT\ot TR\ot R\ar[ru]|-{1\bar{u}\bar{u}11}\ar[dd]|-{11111\bar{v}}\xtwocell[rd]{}<>{^1\bar{u}\bar{u}\bar{\gamma}\quad\ }&&UU\ot UR\ot R\ar[ru]|-{e111}\ar[dd]|-{111\bar{w}}\ar@{}[uu]|*=0[@]{\cong}&\\
UT\ot TS\ot SS\ot S\qquad\ar[rd]|-{111\bar{t}\bar{t}}\ar[dd]_-{1111e1}\xtwocell[rddd]{}<>{^111\bar{t}^2\quad}&&UU\ot UU\ot UU\ot U\ar[rd]|-{11e111}\ar@{}[ru]|*=0[@]{\cong}&&UU\ot U\ar[dd]^-{e1}\\
&UT\ot TT\ot TT\ot T\ar[rd]|-{11e111}\ar[dd]|-{1111e1}\ar[ru]|-{1\bar{u}\bar{u}\bar{u}}\xtwocell[rr]{}<>{^\qquad\qquad1\bar{u}^2\bar{u}}&&UU\ot UU\ot U\ar[ru]|-{e111}\ar[dd]|-{11e1}\ar@{}[ruuu]|*=0[@]{\cong}&\\
UT\ot TS\ot S\ar[rd]_-{111\bar{t}}&&UT\ot TT\ot T\ar[ru]|-{1\bar{u}\bar{u}}\ar[dd]|-{11e1}\xtwocell[rd]{}<>{^1\bar{u}^2\ }&&U\\
&UT\ot TT\ot T\ar[rd]_-{11e1}\ar@{}[ru]|*=0[@]{\cong}&&UU\ot U\ar[ru]_-{e1}\ar@{}[ruuu]|*=0[@]{\cong}&\\
&&UT\ot T\ar[ru]_-{1\bar{u}}&&
}}}
\]
\[
\stackrel{\eqref{ax:OM1}}{\stackrel{\eqref{ax:OM7'}}{=}}
\vcenter{\hbox{\xymatrix@!0@C=15mm{
&&US\ot SR\ot R\ar[rd]^-{1\bar{x}11}&&\\
&UU\ot US\ot SR\ot R\ar[ru]^-{e11111}\ar[rd]|-{111\bar{x}11}\ar[dd]|-{11111\bar{s}}\xtwocell[rddd]{}<>{^111\bar{x}\bar{\beta}\quad\ }&&UU\ot UR\ot R\ar[rd]^-{e111}\ar[dd]|-{111\bar{w}}&\\
UT\ot TS\ot SR\ot R\qquad\quad\ar[ru]^-{1\bar{u}1111}\ar[dd]_-{11111\bar{s}}&&UU\ot UU\ot UR\ot R\ar[ru]|-{e11111}\ar[dd]|-{11111\bar{w}}\ar@{}[uu]|*=0[@]{\cong}&&UR\ot R\ar[dd]^-{1\bar{w}}\\
&UU\ot US\ot SS\ot S\ar[rd]|-{111\bar{x}\bar{x}}&&UU\ot UU\ot U\ar[rd]|-{e111}\ar@{}[ru]|*=0[@]{\cong}&\\
UT\ot TS\ot SS\ot S\qquad\ar[ru]|-{1\bar{u}1111}\ar[rd]|-{111\bar{t}\bar{t}}\ar[dd]_-{1111e1}\ar@{}[ruuu]|*=0[@]{\cong}\xtwocell[rddd]{}<>{^111\bar{t}^2\quad}\xtwocell[rr]{}<>{^1\bar{u}\bar{\zeta}\bar{\zeta}\quad\ }&&UU\ot UU\ot UU\ot U\ar[ru]|-{e11111}\ar[rd]|-{11e111}\ar[dd]|-{1111e1}\ar@{}[ruuu]|*=0[@]{\cong}&&UU\ot U\ar[dd]^-{e1}\\
&UT\ot TT\ot TT\ot T\ar[dd]|-{1111e1}\ar[ru]|-{1\bar{u}\bar{u}\bar{u}}\xtwocell[rd]{}<>{^1\bar{u}\bar{u}^2\quad}&&UU\ot UU\ot U\ar[ru]|-{e111}\ar[dd]|-{11e1}\ar@{}[uu]|*=0[@]{\cong}&\\
UT\ot TS\ot S\ar[rd]_-{111\bar{t}}&&UU\ot UU\ot U\ar[rd]|-{11e1}\ar@{}[ru]|*=0[@]{\cong}&&U\\
&UT\ot TT\ot T\ar[rd]_-{11e1}\ar[ru]|-{1\bar{u}\bar{u}}\xtwocell[rr]{}<>{^1\bar{u}^2\quad}&&UU\ot U\ar[ru]_-{e1}\ar@{}[ruuu]|*=0[@]{\cong}&\\
&&UT\ot T\ar[ru]_-{1\bar{u}}&&
}}}
\]
\[
\stackrel{\eqref{ax:OM4}}{=}
\vcenter{\hbox{\xymatrix@!0@C=15mm{
&&US\ot SR\ot R\ar[rd]^-{1\bar{x}11}\ar[dd]|-{111\bar{s}}\xtwocell[rddd]{}<>{^1\bar{x}\bar{\beta}\quad}&&\\
&UU\ot US\ot SR\ot R\ar[ru]^-{e11111}\ar[dd]|-{11111\bar{s}}&&UU\ot UR\ot R\ar[rd]^-{e111}\ar[dd]|-{111\bar{w}}&\\
UT\ot TS\ot SR\ot R\qquad\quad\ar[ru]^-{1\bar{u}1111}\ar[dd]_-{11111\bar{s}}&&US\ot SS\ot S\ar[rd]|-{1\bar{x}\bar{x}}&&UR\ot R\ar[dd]^-{1\bar{w}}\\
&UU\ot US\ot SS\ot S\ar[ru]|-{e11111}\ar[rd]|-{111\bar{x}\bar{x}}\ar[dd]|-{1111e1}\ar@{}[ruuu]|*=0[@]{\cong}\xtwocell[rddd]{}<>{^111\bar{x}^2\quad}&&UU\ot UU\ot U\ar[rd]|-{e111}\ar[dd]|-{11e1}\ar@{}[ru]|*=0[@]{\cong}&\\
UT\ot TS\ot SS\ot S\qquad\ar[ru]|-{1\bar{u}1111}\ar[dd]_-{1111e1}\ar@{}[ruuu]|*=0[@]{\cong}&&UU\ot UU\ot UU\ot U\ar[ru]|-{e11111}\ar[dd]|-{1111e1}\ar@{}[uu]|*=0[@]{\cong}&&UU\ot U\ar[dd]^-{e1}\\
&UU\ot US\ot S\ar[rd]|-{111\bar{x}}&&UU\ot U\ar[rd]|-{e1}\ar@{}[ru]|*=0[@]{\cong}&\\
UT\ot TS\ot S\ar[rd]_-{111\bar{t}}\ar[ru]|-{1\bar{u}11}\ar@{}[ruuu]|*=0[@]{\cong}\xtwocell[rr]{}<>{^1\bar{u}\bar{\zeta}\quad}&&UU\ot UU\ot U\ar[ru]|-{e111}\ar[rd]|-{11e1}\ar@{}[ruuu]|*=0[@]{\cong}&&U\\
&UT\ot TT\ot T\ar[rd]_-{11e1}\ar[ru]|-{1\bar{u}\bar{u}}\xtwocell[rr]{}<>{^1\bar{u}^2\quad}&&UU\ot U\ar[ru]_-{e1}\ar@{}[uu]|*=0[@]{\cong}&\\
&&UT\ot T\ar[ru]_-{1\bar{u}}&&
}}}
\]
\[
=
\vcenter{\hbox{\xymatrix@!0@C=15mm{
&&US\ot SR\ot R\ar[rd]^-{1\bar{x}11}\ar[dd]|-{111\bar{s}}\xtwocell[rddd]{}<>{^1\bar{x}\bar{\beta}\quad}&&\\
&UU\ot US\ot SR\ot R\ar[ru]^-{e11111}\ar[dd]|-{11111\bar{s}}&&UU\ot UR\ot R\ar[rd]^-{e111}\ar[dd]|-{111\bar{w}}&\\
UT\ot TS\ot SR\ot R\qquad\quad\ar[ru]^-{1\bar{u}1111}\ar[dd]_-{11111\bar{s}}&&US\ot SS\ot S\ar[rd]|-{1\bar{x}\bar{x}}\ar[dd]|-{11e1}\xtwocell[rddd]{}<>{^1\bar{x}^2\quad}&&UR\ot R\ar[dd]^-{1\bar{w}}\\
&UU\ot US\ot SS\ot S\ar[ru]|-{e11111}\ar[dd]|-{1111e1}\ar@{}[ruuu]|*=0[@]{\cong}&&UU\ot UU\ot U\ar[rd]|-{e111}\ar[dd]|-{11e1}\ar@{}[ru]|*=0[@]{\cong}&\\
UT\ot TS\ot SS\ot S\qquad\ar[ru]|-{1\bar{u}1111}\ar[dd]_-{1111e1}\ar@{}[ruuu]|*=0[@]{\cong}&&US\ot S\ar[rd]|-{1\bar{x}}&&UU\ot U\ar[dd]^-{e1}\\
&UU\ot US\ot S\ar[ru]|-{e111}\ar[rd]|-{111\bar{x}}\ar@{}[ruuu]|*=0[@]{\cong}&&UU\ot U\ar[rd]|-{e1}\ar@{}[ru]|*=0[@]{\cong}&\\
UT\ot TS\ot S\ar[rd]_-{111\bar{t}}\ar[ru]|-{1\bar{u}11}\ar@{}[ruuu]|*=0[@]{\cong}\xtwocell[rr]{}<>{^1\bar{u}\bar{\zeta}\quad}&&UU\ot UU\ot U\ar[ru]|-{e111}\ar[rd]|-{11e1}\ar@{}[uu]|*=0[@]{\cong}&&U\\
&UT\ot TT\ot T\ar[rd]_-{11e1}\ar[ru]|-{1\bar{u}\bar{u}}\xtwocell[rr]{}<>{^1\bar{u}^2\quad}&&UU\ot U\ar[ru]_-{e1}\ar@{}[uu]|*=0[@]{\cong}&\\
&&UT\ot T\ar[ru]_-{1\bar{u}}&&
}}}
\]
}
And for a quadruple $(\widehat{\alpha},\widehat{\beta},\widehat{\gamma},\widehat{\zeta})$ in the category $\mathcal{A}(R;S;T;U)$ the quadruple $(P^{-1}\widehat{\alpha},P^{-1}\widehat{\beta},P^{-1}\widehat{\gamma},P^{-1}\widehat{\zeta})$ satisfies axiom~\eqref{ax:OM7}.
{\footnotesize
\[
\vcenter{\hbox{\xymatrix@!0@C=15mm{
&&&&\\
&U\ot UR\ot R\ar[rd]|-{11n11}\ar@/^6mm/[rr]^-{1}\xtwocell[rr]{}<>{^1\widehat{u}^011\quad\ }&&U\ot UR\ot R\ar@/^6mm/[rddd]^-{1}&\\
R\ot R\ar[ru]^-{n11}\ar[dd]_-{n11}\ar[rd]|-{n11}&&U\ot UT\ot TR\ot R\ar[ru]|-{1\widehat{u}11}\ar@/^6mm/[rddd]^-{1}&&\\
&T\ot TR\ot R\ar[ru]|-{n1111}\ar[dd]|-{11n11}\ar@/^6mm/[rddd]^-{1}\ar@{}[uu]|*=0[@]{\cong}\xtwocell[rddd]{}<>{^1\widehat{t}^011\quad\ }&&&\\
S\ot SR\ot R\ar[dd]_-{1\widehat{s}}\ar[rd]|-{n1111}\ar@{}[ru]|*=0[@]{\cong}&&&&U\ot UR\ot R\ar[dd]^-{1\widehat{w}}\\
&T\ot TS\ot SR\ot R\ar[dd]|-{111\widehat{s}}\ar[rd]|-{1\widehat{t}11}\xtwocell[rddd]{}<>{^1\widehat{\alpha}\ }&&U\ot UT\ot TR\ot R\ar[ru]|-{1\widehat{u}11}\ar[dd]|-{111\widehat{v}}\xtwocell[rd]{}<>{^1\widehat{\gamma}\ }&\\
S\ot S\ar[rd]_-{n11}\ar@{}[ru]|*=0[@]{\cong}&&T\ot TR\ot R\ar[dd]^-{1\widehat{v}}\ar[ru]|-{n1111}&&U\ot U\\
&T\ot TS\ot S\ar[rd]_-{1\widehat{t}}&&U\ot UT\ot T\ar[ru]_-{1\widehat{u}}&\\
&&T\ot T\ar[ru]_-{n11}\ar@{}[ruuu]|*=0[@]{\cong}&&
}}}
\]
\[
=
\vcenter{\hbox{\xymatrix@!0@C=15mm{
&&&&\\
&U\ot UR\ot R\ar[rd]|-{11n11}\ar@/^6mm/[rr]^-{1}\xtwocell[rr]{}<>{^1\widehat{u}^011\quad\ }&&U\ot UR\ot R\ar@/^6mm/[rddd]^-{1}&\\
R\ot R\ar[ru]^-{n11}\ar[dd]_-{n11}\ar[rd]|-{n11}&&U\ot UT\ot TR\ot R\ar[ru]|-{1\widehat{u}11}\ar@/^6mm/[rddd]^-{1}\ar[dd]|-{1111n11}\xtwocell[rddd]{}<>{^111\widehat{t}^011}&&\\
&T\ot TR\ot R\ar[ru]|-{n1111}\ar[dd]|-{11n11}\ar@{}[uu]|*=0[@]{\cong}&&&\\
S\ot SR\ot R\ar[dd]_-{1\widehat{s}}\ar[rd]|-{n1111}\ar@{}[ru]|*=0[@]{\cong}&&U\ot UT\ot TS\ot SR\ot R\ar[rd]|-{111\widehat{t}11}\ar[dd]|-{11111\widehat{s}}\xtwocell[rddd]{}<>{^1\widehat{\alpha}\ }&&U\ot UR\ot R\ar[dd]^-{1\widehat{w}}\\
&T\ot TS\ot SR\ot R\ar[ru]|-{n111111}\ar[dd]|-{111\widehat{s}}\ar@{}[ruuu]|*=0[@]{\cong}&&U\ot UT\ot TR\ot R\ar[ru]|-{1\widehat{u}11}\ar[dd]|-{111\widehat{v}}\xtwocell[rd]{}<>{^1\widehat{\gamma}\ }&\\
S\ot S\ar[rd]_-{n11}\ar@{}[ru]|*=0[@]{\cong}&&U\ot UT\ot TS\ot S\ar[rd]|-{111\widehat{t}}&&U\ot U\\
&T\ot TS\ot S\ar[rd]_-{1\widehat{t}}\ar[ru]|-{n1111}\ar@{}[ruuu]|*=0[@]{\cong}&&U\ot UT\ot T\ar[ru]_-{1\widehat{u}}&\\
&&T\ot T\ar[ru]_-{n11}\ar@{}[uu]|*=0[@]{\cong}&&
}}}
\]
\[
\stackrel{\eqref{ax:A3}}{=}
\vcenter{\hbox{\xymatrix@!0@C=15mm{
&&&&\\
&U\ot UR\ot R\ar[rd]|-{11n11}\ar[dd]|-{11n11}\ar@/^6mm/[rr]^-{1}\xtwocell[rr]{}<>{^1\widehat{u}^011\quad\ }&&U\ot UR\ot R\ar[dd]|-{11n11}\ar@/^6mm/[rddd]^-{1}\xtwocell[rddd]{}<>{^1\widehat{x}^011}&\\
R\ot R\ar[ru]^-{n11}\ar[dd]_-{n11}&&U\ot UT\ot TR\ot R\ar[ru]|-{1\widehat{u}11}\ar[dd]|-{1111n11}&&\\
&U\ot US\ot SR\ot R\ar[rd]|-{11n1111}\ar@{}[ru]|*=0[@]{\cong}&&U\ot US\ot SR\ot R\ar[rd]|-{1\widehat{x}11}&\\
S\ot SR\ot R\ar[dd]_-{1\widehat{s}}\ar[rd]|-{n1111}\ar[ru]|-{n1111}\ar@{}[ruuu]|*=0[@]{\cong}&&U\ot UT\ot TS\ot SR\ot R\ar[ru]|-{1\widehat{u}1111}\ar[rd]|-{111\widehat{t}11}\ar[dd]|-{11111\widehat{s}}\ar@{}[ruuu]|*=0[@]{\cong}\xtwocell[rddd]{}<>{^1\widehat{\alpha}\ }\xtwocell[rr]{}<>{^\qquad\qquad 1\widehat{\zeta}11}&&U\ot UR\ot R\ar[dd]^-{1\widehat{w}}\\
&T\ot TS\ot SR\ot R\ar[ru]|-{n111111}\ar[dd]|-{111\widehat{s}}\ar@{}[uu]|*=0[@]{\cong}&&U\ot UT\ot TR\ot R\ar[ru]|-{1\widehat{u}11}\ar[dd]|-{111\widehat{v}}\xtwocell[rd]{}<>{^1\widehat{\gamma}\ }&\\
S\ot S\ar[rd]_-{n11}\ar@{}[ru]|*=0[@]{\cong}&&U\ot UT\ot TS\ot S\ar[rd]|-{111\widehat{t}}&&U\ot U\\
&T\ot TS\ot S\ar[rd]_-{1\widehat{t}}\ar[ru]|-{n1111}\ar@{}[ruuu]|*=0[@]{\cong}&&U\ot UT\ot T\ar[ru]_-{1\widehat{u}}&\\
&&T\ot T\ar[ru]_-{n11}\ar@{}[uu]|*=0[@]{\cong}&&
}}}
\]
\[
\stackrel{\eqref{ax:A5}}{=}
\vcenter{\hbox{\xymatrix@!0@C=15mm{
&&&&\\
&U\ot UR\ot R\ar[dd]|-{11n11}\ar@/^6mm/[rr]^-{1}&&U\ot UR\ot R\ar[dd]|-{11n11}\ar@/^6mm/[rddd]^-{1}\xtwocell[rddd]{}<>{^1\widehat{x}^011}&\\
R\ot R\ar[ru]^-{n11}\ar[dd]_-{n11}&&&&\\
&U\ot US\ot SR\ot R\qquad\quad\ar[rd]|-{11n1111}\ar[dd]|-{111\widehat{s}}\ar@/^6mm/[rr]^-{1}\xtwocell[rr]{}<>{^1\widehat{u}^01111\qquad\ }&&U\ot US\ot SR\ot R\ar[dd]|-{111\widehat{s}}\xtwocell[rddd]{}<>{^1\widehat{\beta}}\ar[rd]|-{1\widehat{x}11}&\\
S\ot SR\ot R\ar[dd]_-{1\widehat{s}}\ar[ru]|-{n1111}\ar@{}[ruuu]|*=0[@]{\cong}&&U\ot UT\ot TS\ot SR\ot R\ar[ru]|-{1\widehat{u}1111}\ar[dd]|-{11111\widehat{s}}&&U\ot UR\ot R\ar[dd]^-{1\widehat{w}}\\
&U\ot US\ot S\ar[rd]|-{11n11}\ar@{}[ru]|*=0[@]{\cong}&&U\ot US\ot S\ar[rd]|-{1\widehat{x}}&\\
S\ot S\ar[rd]_-{n11}\ar[ru]|-{n11}\ar@{}[ruuu]|*=0[@]{\cong}&&U\ot UT\ot TS\ot S\ar[rd]|-{111\widehat{t}}\ar[ru]|-{1\widehat{u}11}\ar@{}[ruuu]|*=0[@]{\cong}\xtwocell[rr]{}<>{^1\widehat{\zeta}}&&U\ot U\\
&T\ot TS\ot S\ar[rd]_-{1\widehat{t}}\ar[ru]|-{n1111}\ar@{}[uu]|*=0[@]{\cong}&&U\ot UT\ot T\ar[ru]_-{1\widehat{u}}&\\
&&T\ot T\ar[ru]_-{n11}\ar@{}[uu]|*=0[@]{\cong}&&
}}}
\]
\[
=
\vcenter{\hbox{\xymatrix@!0@C=15mm{
&&&&\\
&U\ot UR\ot R\ar[dd]|-{11n11}\ar@/^6mm/[rr]^-{1}&&U\ot UR\ot R\ar[dd]|-{11n11}\ar@/^6mm/[rddd]^-{1}\xtwocell[rddd]{}<>{^1\widehat{x}^011}&\\
R\ot R\ar[ru]^-{n11}\ar[dd]_-{n11}&&&&\\
&U\ot US\ot SR\ot R\ar[dd]|-{111\widehat{s}}\ar@/^6mm/[rr]^-{1}&&U\ot US\ot SR\ot R\ar[dd]|-{111\widehat{s}}\xtwocell[rddd]{}<>{^1\widehat{\beta}}\ar[rd]|-{1\widehat{x}11}&\\
S\ot SR\ot R\ar[dd]_-{1\widehat{s}}\ar[ru]|-{n1111}\ar@{}[ruuu]|*=0[@]{\cong}&&&&U\ot UR\ot R\ar[dd]^-{1\widehat{w}}\\
&U\ot US\ot S\ar[rd]|-{11n11}\ar@/^6mm/[rr]^-{1}\xtwocell[rr]{}<>{^1\widehat{u}^011\quad\ }&&U\ot US\ot S\ar[rd]|-{1\widehat{x}}&\\
S\ot S\ar[rd]_-{n11}\ar[ru]|-{n11}\ar@{}[ruuu]|*=0[@]{\cong}&&U\ot UT\ot TS\ot S\ar[rd]|-{111\widehat{t}}\ar[ru]|-{1\widehat{u}11}\xtwocell[rr]{}<>{^1\widehat{\zeta}}&&U\ot U\\
&T\ot TS\ot S\ar[rd]_-{1\widehat{t}}\ar[ru]|-{n1111}\ar@{}[uu]|*=0[@]{\cong}&&U\ot UT\ot T\ar[ru]_-{1\widehat{u}}&\\
&&T\ot T\ar[ru]_-{n11}\ar@{}[uu]|*=0[@]{\cong}&&
}}}
\]
}
Therefore the equivalence~\ref{eq:P3} restricts its domain and codomain to the desired equivalence of categories.
\end{proof}

Now we continue with the opmonadicity step.

\begin{teo}\label{teo:3SimplexOpmonadicOplaxAct}
Let $\mathcal{M}$ be an opmonadic-friendly monoidal bicategory. For every four bidualities $R\dashv R\ot$, $S\dashv S\ot$, $T\dashv T\ot$ and $U\dashv U\ot$, and every three opmonadic adjunctions
\[
\vcenter{\hbox{\xymatrix{
R\ot\dtwocell_{i\ob}^{i\ot}{'\dashv}\\
I
}}}
\qquad
\vcenter{\hbox{\xymatrix{
S\ot\dtwocell_{j\ob}^{j\ot}{'\dashv}\\
I
}}}
\qquad
\vcenter{\hbox{\xymatrix{
T\ot\dtwocell_{k\ob}^{k\ot}{'\dashv}\\
I
}}}
\]
there is an equivalence of categories as shown.
\[
\mathcal{A}(R;S;T;U)\simeq\OplaxAct(R;S;T;U)
\]
\end{teo}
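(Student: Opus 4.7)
The plan is to mimic the strategy used for 2-simplices in Theorem~\ref{teo:2SimplexOpmonadicOplaxAct}, leveraging the fact that both $\mathcal{A}(R;S;T;U)$ and $\OplaxAct(R;S;T;U)$ are defined as full subcategories of 4-fold products of categories of 2-simplices. Taking the product of four instances of the equivalence from Theorem~\ref{teo:2SimplexOpmonadicOplaxAct}, one obtains an equivalence
\[
\mathcal{A}(R;S;T)\times\mathcal{A}(R;S;U)\times\mathcal{A}(R;T;U)\times\mathcal{A}(S;T;U)\simeq\OplaxAct(R;S;T)\times\OplaxAct(R;S;U)\times\OplaxAct(R;T;U)\times\OplaxAct(S;T;U)
\]
implemented componentwise by the precomposition functor $Q$ of Theorem~\ref{teo:2SimplexOpmonadicOplaxAct}. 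It suffices then to show that this equivalence restricts to the full subcategories carved out by \eqref{ax:A5} on the left and \eqref{ax:3SIM} on the right; once restricted, the fullness, faithfulness, and essential surjectivity on objects are inherited automatically, since both subcategories are full.

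First I would handle the forward direction: assume $(\widehat{\alpha},\widehat{\beta},\widehat{\gamma},\widehat{\zeta})$ lies in $\mathcal{A}(R;S;T;U)$ and precompose each of the two sides of \eqref{ax:A5} with the arrow
\[
1k\ob 1j\ob 1i\ob 1\colon UTSR\longrightarrow UT\ot TS\ot SR\ot R,
\]
which is opmonadic by conditions (c) and (d) on $\mathcal{M}$. Using the three isomorphisms of the form \eqref{iso:opmonadicity2} to reshuffle each outer region, and using the explicit definition of $Q\widehat{\alpha}$, $Q\widehat{\beta}$, $Q\widehat{\gamma}$, $Q\widehat{\zeta}$ in terms of $\widehat{\alpha}$, $\widehat{\beta}$, $\widehat{\gamma}$, $\widehat{\zeta}$, the two sides of \eqref{ax:A5} become precisely the two sides of \eqref{ax:3SIM} for the quadruple $(Q\widehat{\alpha},Q\widehat{\beta},Q\widehat{\gamma},Q\widehat{\zeta})$. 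This step is essentially a diagram chase in the spirit of the calculations verifying well-definedness of $Q$ on objects in the proof of Theorem~\ref{teo:2SimplexOpmonadicOplaxAct}.

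For the converse, suppose $(Q\widehat{\alpha},Q\widehat{\beta},Q\widehat{\gamma},Q\widehat{\zeta})$ satisfies \eqref{ax:3SIM}. Both sides of \eqref{ax:A5} are cells with a common domain $UT\ot TS\ot SR\ot R$, and by the opmonadicity of $1k\ob 1j\ob 1i\ob 1$ together with condition (e) preserving the relevant reflexive coequalisers under tensoring and composition, two such cells are equal if and only if their precompositions with this opmonadic left adjoint agree. Since these precompositions are identified, up to the fixed coherence isomorphisms, with the two sides of \eqref{ax:3SIM} for the $Q$-images, the assumption yields \eqref{ax:A5}. The main obstacle is precisely this reverse direction: one must carefully trace how the various coherence isomorphisms of \eqref{iso:opmonadicity2} appear on each side, ensuring that cancelling them is legitimate and that the opmonadic universal property can be invoked globally, not just componentwise. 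Once that bookkeeping is in place, the restricted functor is a well-defined equivalence on 3-simplices, and the theorem follows.
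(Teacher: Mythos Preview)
Your proposal is correct and follows essentially the same approach as the paper: take the 4-fold product of the equivalence from Theorem~\ref{teo:2SimplexOpmonadicOplaxAct}, observe that precomposition with the opmonadic left adjoint $1k\ob 1j\ob 1i\ob 1$ carries \eqref{ax:A5} to \eqref{ax:3SIM}, and use opmonadicity (i.e.\ faithfulness of that precomposition functor) to reflect \eqref{ax:3SIM} back to \eqref{ax:A5}. One small remark: the reflecting step only needs faithfulness of precomposition with an opmonadic left adjoint, which already follows from conditions (c) and (d); condition (e) plays no role here.
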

\begin{proof}

By Theorem~\ref{teo:2SimplexOpmonadicOplaxAct} there is an equivalence of categories as follows.
\begin{multline}\label{eq:Q3}
\mathcal{A}(R;S;T)\times\mathcal{A}(R;S;U)\times\mathcal{A}(R;T;U)\times\mathcal{A}(S;T;U)\\
\simeq\OplaxAct(R;S;T)\times\OplaxAct(R;S;U)\\
\times\OplaxAct(R;T;U)\times\OplaxAct(S;T;U)
\end{multline}
This equivalence restricts to $\mathcal{A}(R;S;T;U)$ because precomposition with the arrow
\[
\xymatrix{
1k\ob 1j\ob 1i\ob 1:UT\ot TS\ot SR\ot R\ar[r]&UTSR
}
\]
takes axiom~\eqref{ax:A5} to axiom~\eqref{ax:3SIM}. Moreover, for any quadruple $(\alpha,\beta,\gamma,\zeta)$ in the category $\OplaxAct(R;S;T;U)$, the quadruple $(\widehat{\alpha},\widehat{\beta},\widehat{\gamma},\widehat{\zeta})$ in the category  $\mathcal{A}(R;S;T)\times\mathcal{A}(R;S;U)\times\mathcal{A}(R;T;U)\times\mathcal{A}(S;T;U)$ induced by opmonadicity satisfies axiom~\eqref{ax:A5}. Indeed, if both sides of axiom~\eqref{ax:A5} for $(\widehat{\alpha},\widehat{\beta},\widehat{\gamma},\widehat{\zeta})$ are precomposed with the opmonadic arrow $1k\ob 1j\ob 1i\ob 1$, one ends up with each of the sides of axiom~\eqref{ax:3SIM} for $(\alpha,\beta,\gamma,\zeta)$, which are equal.

\end{proof}

With Proposition~\ref{prop:Opmon3simplexTransposition} and Theorem~\ref{teo:3SimplexOpmonadicOplaxAct} we deduce the following result.

\begin{cor}
Let $\mathcal{M}$ be an opmonadic-friendly monoidal bicategory. For every four bidualities $R\dashv R\ot$, $S\dashv S\ot$, $T\dashv T\ot$ and $U\dashv U\ot$, and every three opmonadic adjunctions
\[
\vcenter{\hbox{\xymatrix{
R\ot\dtwocell_{i\ob}^{i\ot}{'\dashv}\\
I
}}}
\qquad
\vcenter{\hbox{\xymatrix{
S\ot\dtwocell_{j\ob}^{j\ot}{'\dashv}\\
I
}}}
\qquad
\vcenter{\hbox{\xymatrix{
T\ot\dtwocell_{k\ob}^{k\ot}{'\dashv}\\
I
}}}
\]
there is an equivalence of categories as shown.
\[
\OplaxAct(R;S;T;U)\simeq\Opmon(R\ot R,S\ot S,T\ot T,U\ot U)
\]
\end{cor}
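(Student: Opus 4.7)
The plan is to chain together the two equivalences that were just established in the preceding results, using the auxiliary category $\mathcal{A}(R;S;T;U)$ as an intermediate object. Proposition~\ref{prop:Opmon3simplexTransposition} furnishes the transposition-step equivalence
\[
\Opmon(R\ot R,S\ot S,T\ot T,U\ot U)\simeq\mathcal{A}(R;S;T;U),
\]
which relies only on the bidualities $S\dashv S\ot$, $T\dashv T\ot$, and $U\dashv U\ot$ and is obtained from the product of the functors $P$ built in Theorem~\ref{teo:Opmon2simplexTransposition}, after verifying that the matching coherence conditions \eqref{ax:OM7} and \eqref{ax:A5} correspond under componentwise application of $P$.

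Next, Theorem~\ref{teo:3SimplexOpmonadicOplaxAct} provides the opmonadicity-step equivalence
\[
\mathcal{A}(R;S;T;U)\simeq\OplaxAct(R;S;T;U),
\]
obtained by precomposing along the opmonadic left adjoint $1k\ob 1j\ob 1i\ob 1$, together with its analogues for each triple of 0-faces. The key observation underlying its proof is that precomposition with this opmonadic arrow converts axiom~\eqref{ax:A5} into axiom~\eqref{ax:3SIM}, and that the cell induced by opmonadicity from a quadruple satisfying \eqref{ax:3SIM} satisfies \eqref{ax:A5} back.

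Composing these two equivalences of categories gives the desired
\[
\Opmon(R\ot R,S\ot S,T\ot T,U\ot U)\simeq\OplaxAct(R;S;T;U),
\]
and so the proof is a one-line composite, entirely parallel to the 2-simplex corollary displayed right after Theorem~\ref{teo:2SimplexOpmonadicOplaxAct}. There is no genuine obstacle here: all the real content sits in the two component theorems, and the only thing worth flagging is the order of the two steps (first transpose, then use opmonadicity), which in this dimension is forced since the opmonadic left adjoints $1j\ob 1i\ob 1$ can only be precomposed once the source has been rewritten into a form involving tensors with $R\ot$ and $S\ot$.
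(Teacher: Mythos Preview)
Your proposal is correct and matches the paper's proof exactly: the paper also simply composes the equivalences from Proposition~\ref{prop:Opmon3simplexTransposition} and Theorem~\ref{teo:3SimplexOpmonadicOplaxAct} through the intermediate category $\mathcal{A}(R;S;T;U)$, displaying the one-line chain $\OplaxAct(R;S;T;U)\simeq\mathcal{A}(R;S;T;U)\simeq\Opmon(R\ot R,S\ot S,T\ot T,U\ot U)$.
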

\begin{proof}
\[
\OplaxAct(R;S;T;U)\simeq\mathcal{A}(R;S;T;U)\simeq\Opmon(R\ot R,S\ot S,T\ot T,U\ot U)
\]
\end{proof}

\begin{cor}\label{cor:3simpEquiv}
Let $\mathcal{M}$ be an opmonadic-friendly monoidal bicategory such that every object $R$ has a chosen right bidual, and a chosen adjunction $i\dashv i^*$ whose opposite adjunction is opmonadic. There is an equivalence of categories of 3-simplices,
\[
\OplaxAct^{\mathrm{e}}_3\simeq \Opmon^{\mathrm{e}}_3
\]
which commutes with faces and degeneracies.
\[
\vcenter{\hbox{\xymatrix@!0@R=25mm@C=35mm{
\Opmon^{\mathrm{e}}_2\ar@<-6mm>[d]|-{\cs_0}\ar[d]|-{\cs_1}\ar@<6mm>[d]|-{\cs_2}\ar[r]_-{\simeq}&\OplaxAct^{\mathrm{e}}_2\ar@<-6mm>[d]|-{\cs_0}\ar[d]|-{\cs_1}\ar@<6mm>[d]|-{\cs_2}\\
\Opmon^{\mathrm{e}}_3\ar@<9mm>[u]|-{\partial_0}\ar@<3mm>[u]|-{\partial_1}\ar@<-3mm>[u]|-{\partial_2}\ar@<-9mm>[u]|-{\partial_3}\ar[r]^-{\simeq}&\OplaxAct^{\mathrm{e}}_3\ar@<9mm>[u]|-{\partial_0}\ar@<3mm>[u]|-{\partial_1}\ar@<-3mm>[u]|-{d_2}\ar@<-9mm>[u]|-{d_3}
}}}
\]\qed
\end{cor}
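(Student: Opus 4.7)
The plan is to follow the same strategy as in the proof of Corollary~\ref{cor:2simpEquiv}. First, for every quadruple of 0-simplices $((R,i\dashv i^*),(S,j\dashv j^*),(T,k\dashv k^*),(U,l\dashv l^*))$ built from the chosen data, the previous corollary produces an equivalence of categories of 3-simplices with fixed 0-faces
\[
\OplaxAct(R;S;T;U)\simeq\Opmon(R\ot R,S\ot S,T\ot T,U\ot U).
\]
Since both $\OplaxAct^{\mathrm{e}}_3$ and $\Opmon^{\mathrm{e}}_3$ are defined (using the chosen data) as coproducts of such categories indexed over the set of quadruples of objects of $\mathcal{M}$, assembling these equivalences coproduct-wise yields the desired equivalence $\Opmon^{\mathrm{e}}_3\simeq\OplaxAct^{\mathrm{e}}_3$.

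Next I would check that this equivalence commutes strictly with each of the four face functors $\partial_0,\partial_1,\partial_2,\partial_3$. The point is that at the level of fixed 0-faces the equivalence was constructed componentwise: a 3-simplex $(\bar{\alpha},\bar{\beta},\bar{\gamma},\bar{\zeta})$ on the $\Opmon$ side is sent to $(Q P\bar{\alpha},Q P\bar{\beta},Q P\bar{\gamma},Q P\bar{\zeta})$ on the $\OplaxAct$ side, where $P$ and $Q$ are the transposition and opmonadicity functors from Theorem~\ref{teo:Opmon2simplexTransposition} and Theorem~\ref{teo:2SimplexOpmonadicOplaxAct} respectively. The face $\partial_i$ of a 3-simplex is literally one of its four component 2-simplices, and by the definition of the 2-simplex equivalence in Corollary~\ref{cor:2simpEquiv} applying $\partial_i$ before or after the equivalence gives the same result. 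Hence the face square commutes on the nose.

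For the degeneracy functors $\cs_0,\cs_1,\cs_2$ the square commutes only up to coherent isomorphism, exactly as in the 2-simplex case. The source of the discrepancy is the same: the degeneracies in $\OplaxAct^{\mathrm{e}}(\mathcal{M})$ are built from the skew monoidale structure of Lemma~\ref{lem:OneRightSkewMonoidale} on an object $R$, whereas on the $\Opmon^{\mathrm{e}}(\mathcal{M})$ side they correspond to the identity opmonoidal arrow $\xymatrix@1@C=5mm{R\ot R\ar[r]&R\ot R}$, and the identification of these two oplax actions under the equivalence $\Opmon(R\ot R,R\ot R)\simeq\OplaxAct(R;R)$ uses the isomorphisms~\eqref{iso:transposition} and~\eqref{iso:opmonadicity2} rather than being an equality. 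As in Remark~\ref{rem:degeneracies}, these isomorphisms can be absorbed by redefining the degenerate 3-simplices of $\OplaxAct^{\mathrm{e}}(\mathcal{M})$ so that they match on the nose the image of the corresponding degenerate $\Opmon^{\mathrm{e}}$-simplex.

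The main thing to watch out for will be bookkeeping: one must verify that the single strictification of the 1-simplex degeneracies (as described in Remark~\ref{rem:degeneracies}) and the analogous modification of the 2-simplex degeneracies already used in Corollary~\ref{cor:2simpEquiv} together suffice to force the 3-simplex degeneracy square to commute strictly. This is straightforward since a degenerate 3-simplex of $\OplaxAct^{\mathrm{e}}(\mathcal{M})$ is entirely determined by a 2-simplex together with whiskerings by already-strictified 1-simplex degeneracies, so no new choices enter at dimension three. Once these adjustments are incorporated, we obtain a genuine simplicial morphism whose components in each dimension are equivalences of categories, i.e.\ a weak equivalence, which is what Corollary~\ref{cor:3simpEquiv} asserts.
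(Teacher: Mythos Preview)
Your argument is correct and follows exactly the approach the paper has in mind: the paper treats this corollary as immediate (it is marked with a bare \qed), and the implicit reasoning is precisely the pattern of Corollary~\ref{cor:2simpEquiv} that you reproduce---assemble the fixed-0-face equivalences into a coproduct, observe that faces commute strictly by construction, and handle degeneracies up to isomorphism with the strictification of Remark~\ref{rem:degeneracies}. Your write-up simply makes explicit what the paper leaves to the reader.
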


And finally we put together \eqref{eq:FacesDegeneracies01}, Corollary~\ref{cor:2simpEquiv}, and Corollary~\ref{cor:3simpEquiv} which form a pseudo-simplicial morphism of simplicial objects in $\Cat$, which as we mentioned in Remark~\ref{rem:degeneracies} may be strictified by making the right choices in the definition of the degeneracy functors of $\OplaxAct(\mathcal{M})$ to get an actual simplicial map.

\begin{teo}
Let $\mathcal{M}$ be an opmonadic-friendly monoidal bicategory such that every object $R$ has a chosen right bidual, and a chosen adjunction $i\dashv i^*$ whose opposite adjunction is opmonadic. There is a weak equivalence of simplicial objects in $\Cat$ as shown.
\[
\OplaxAct^{\mathrm{e}}(\mathcal{M})\simeq N(\Opmon^{\mathrm{e}}(\mathcal{M}))
\]\qed
\end{teo}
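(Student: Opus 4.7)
The plan is to assemble this theorem from the levelwise equivalences already established in the preceding two sections, together with 3-coskeletality of both sides. Recall that a weak equivalence of simplicial objects in $\Cat$ only requires that each component $F_n$ be an equivalence of categories, so it suffices to produce such equivalences in every dimension that commute with face and degeneracy functors. Since the lax-2-nerve of a bicategory is 3-coskeletal and $\OplaxAct^{\mathrm{e}}(\mathcal{M})$ is 3-coskeletal by construction, it is enough to deal with dimensions $0$, $1$, $2$, and $3$; the remaining components are then determined by the universal property of $\cosk_3$.

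In dimension $0$ one has a canonical isomorphism of discrete categories $\Opmon^{\mathrm{e}}_0 \cong \Ob\mathcal{M} \cong \OplaxAct^{\mathrm{e}}_0$ coming from the chosen biduals and chosen adjunctions. In dimension $1$ the equivalence is Theorem~\ref{teo:Opmon_is_OplaxAct_Local} applied to each pair of $0$-simplices. In dimensions $2$ and $3$ it is supplied by Corollaries~\ref{cor:2simpEquiv} and \ref{cor:3simpEquiv} respectively, themselves obtained as the composite of the transposition step (Theorem~\ref{teo:Opmon2simplexTransposition} and Proposition~\ref{prop:Opmon3simplexTransposition}) with the opmonadicity step (Theorems~\ref{teo:2SimplexOpmonadicOplaxAct} and \ref{teo:3SimplexOpmonadicOplaxAct}). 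Each of these four equivalences has been arranged to commute strictly with all face functors, because both the transposition map $P$ and the opmonadicity map $Q$ are defined by operations (whiskering with unit/counit arrows of a biduality, and precomposition with a fixed opmonadic left adjoint) that restrict transparently to $k$-faces for every $k$.

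The genuine obstacle is compatibility with the degeneracy functors. As already flagged in Remark~\ref{rem:degeneracies}, with the obvious definitions the squares involving $\cs_i$ commute only up to coherent invertible $2$-cells, so what one gets directly is a pseudonatural equivalence, i.e.\ an equivalence in $[\Delta^{\mathrm{op}},\Cat]_{\mathrm{ps}}$, between the lax-2-nerve and $\OplaxAct^{\mathrm{e}}(\mathcal{M})$. The remedy suggested by Remark~\ref{rem:degeneracies} is to redefine the degenerate $n$-simplices of $\OplaxAct^{\mathrm{e}}(\mathcal{M})$ by inserting the canonical composite $1i\ot 1 \cdot 1i\ob 1 \cong 1$ (and its higher analogues involving $j$, $k$, $\ldots$) at the appropriate slots, so that the equivalences of the preceding paragraph strictly intertwine $\cs_i$ on both sides. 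Since the substituted arrows differ from the old ones only up to coherent isomorphism, the rectified simplicial object is equivalent in $[\Delta^{\mathrm{op}},\Cat]_{\mathrm{ps}}$ to the original $\OplaxAct^{\mathrm{e}}(\mathcal{M})$, and the weak equivalence with $N(\Opmon^{\mathrm{e}}(\mathcal{M}))$ transports back.

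The hardest part, therefore, is not producing the levelwise equivalences, which are already in hand, but verifying that one uniform choice of rectified degeneracies simultaneously makes the squares strict in all four dimensions and remains coherent with the face maps. However, since every substitution is through an invertible $2$-cell built from the biduality units/counits and the coherence isomorphisms of a monoidal bicategory, the required checks reduce to instances of the triangle equations and the Gray-monoid axioms, and no new ideas beyond those already deployed in Sections~\ref{sec:OplaxActions} and \ref{sec:OplaxActionsOpmonoidalArrows} are needed.
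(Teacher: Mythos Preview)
Your proposal is correct and matches the paper's approach: the paper's own proof is a one-line assembly of \eqref{eq:FacesDegeneracies01} with Corollaries~\ref{cor:2simpEquiv} and~\ref{cor:3simpEquiv} into a pseudosimplicial morphism, together with the observation from Remark~\ref{rem:degeneracies} that strictifying the degeneracies of $\OplaxAct^{\mathrm{e}}(\mathcal{M})$ yields an honest simplicial map. Your write-up simply makes explicit the 3-coskeletality argument for the higher dimensions and spells out why the face squares commute strictly while the degeneracy squares require the rectification of Remark~\ref{rem:degeneracies}.
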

\bibliographystyle{alpha}
\bibliography{CoalgebroidsAndOplaxActions}
\end{document}